\newcommand{\Lie}{\operatorname{Lie}}
\newcommand{\Ext}{\operatorname{Ext}}
\newcommand{\opH}{\operatorname{H}}
\newcommand{\Hom}{\operatorname{Hom}}
\DeclareMathOperator{\Ind}{Ind}
\newcommand{\gl}{\mathfrak{g}}
\newcommand{\B}{\mathrm{B}}
\newcommand{\T}{\mathrm{T}}
\theoremstyle{definition}
\newtheorem{theorem}{Theorem}[subsection]
\let\c@question\c@theorem\makeatother
\let\c@fact\c@theorem\makeatother
\let\c@note\c@theorem\makeatother
\newtheorem{lemma}{Lemma}[subsection]
\let\c@lemma\c@theorem\makeatother
\let\c@alg\c@theorem\makeatother
\newtheorem{remark}{Remark}[subsection]
\let\c@remark\c@theorem\makeatother
\let\c@example\c@theorem\makeatother
\newtheorem{prop}{Proposition}[subsection]
\let\c@prop\c@theorem\makeatother
\let\c@conj\c@theorem\makeatother
\newtheorem{cor}{Corollary}[subsection]
\let\c@cor\c@theorem\makeatother
\let\c@defn\c@theorem\makeatother
\numberwithin{equation}{subsection}
\crefname{theorem}{Theorem}{Theorems}
\crefname{fact}{Fact}{Facts}
\crefname{question}{Question}{Question}
\crefname{note}{Note}{Notes}
\crefname{lemma}{Lemma}{Lemmas}
\crefname{alg}{Algorithm}{Algorithms}
\crefname{remark}{Remark}{Remarks}
\crefname{example}{Example}{Examples}
\crefname{prop}{Proposition}{Propositions}
\crefname{conj}{Conjecture}{Conjectures}
\crefname{cor}{Corollary}{Corollaries}
\crefname{defn}{Definition}{Definitions}
\crefname{equation}{}{}
\title{On the cohomology of the Ree groups and kernels of exceptional isogenies}
\begin{document}
\author{Aura-Cristiana Radu}
\address{School of Mathematics, Statistics and Physics,
Herschel Building,
Newcastle,
NE1 7RU, UK}
\email{a.radu2@newcastle.ac.uk}

\subjclass[2000]{Primary 20C, 20G; Secondary 20J06, 20G10}

\begin{abstract}
Let $G$ be a simple, simply connected algebraic group over an algebraically closed field $k$ of characteristic $p>0$. Let $\sigma : G \rightarrow G$ be a surjective endomorphism of $G$ such that the fixed point set $G(\sigma)$ is a Suzuki or Ree group. Then, let $G_{\sigma}$ denote the scheme-theoretic kernel of $\sigma.$ Using methods of Jantzen and Bendel-Nakano-Pillen, we compute the $1$-cohomology for the Frobenius kernels with coefficients in the induced modules,  $\opH^{1}(G_{\sigma}, \opH^{0}(\lambda))$, and the $1$-cohomology for the Frobenius kernels with coefficients in the simple modules,  $\opH^{1}(G_{\sigma}, L(\lambda))$ for the Suzuki and Ree groups. Moreover, we improve the known bounds for identifying extensions for the Ree groups of type $F_4$ with the ones for the algebraic group.
\end{abstract}

\parindent=0pt

\maketitle

\section{Introduction}

Let $G$ be a simple, simply connected algebraic group over an algebraically closed field $k$ of characteristic $p>0$. Then, for a strict endomorphism $\sigma : G \rightarrow G$, the fixed point set of the points, $G(\sigma):=G(k)^{\sigma}$, is a finite group. Moreover, the scheme-theoretic kernel of $\sigma$ is an infinitesimal subgroup of $G$ and we denote it by $G_{\sigma}$. The study of cohomology of finite groups of Lie type has been of great interest throughout the years, as it encapsulates crucial information regarding the category of $kG^{\sigma}$-modules. In particular, one aspect of this broader topic is the computation of non-split extensions between simple modules. 

The groundbreaking work of Cline, Parshall, Scott and van der Kallen \cites{CPS, CPSvdK} relates rational cohomology to the cohomology of finite groups. Further work by Andersen \cite{And} then provides a general approach for Chevalley groups, with restrictions on the minimal bound on the characteristic $p$. However, since the cases of small values of $p$ could not be tackled using this construction, a mixture of techniques arose, characterised by the fact that they relied on specific information concerning the groups and root systems. (See \cite[Chapter 12]{Hum06} for a literature review.) 

In a series of papers \cites{Sin92, Sin93, Sin94a, Sin94b}, the $1$-cohomology for the Suzuki-Ree groups was considered. In particular, in \cites{Sin94a, Sin94b}, Sin computed the $1$-cohomology for the algebraic group of type $F_4$ in characteristic $2$.

Bendel-Nakano-Pillen take a different approach, building on \cites{CPS, CPSvdK}, in which extensions for the finite group are compared to extensions for the ambient algebraic group using $\Ind_{G(\sigma)}^G k$; moreover, this approach required passage to Frobenius kernels. Initially, the Chevalley groups were considered in \cite{BNP04b}, for which the strict endomorphism $\sigma : G \to G$ is $\sigma = F^r$, the composition of the Frobenius map with itself $r$ times, and the scheme-theoretic kernel is $G_{\sigma}=G_r$. This was followed by \cite{BNP06}, where the authors provide some analogues for the twisted groups; these groups are characterised by the existence of a non-trivial graph automorphism $\theta$ such that $\sigma=F^r \circ \theta$ and in this case the scheme-theoretic kernel $G_{\sigma}=G_r$. We shall henceforth refer to both of these types of kernels as classical Frobenius kernels. Due to the existence of Sin's results concerning the Suzuki and Ree groups, they did not provide analogues using their method. 

This paper aims to fill some gaps in the literature: first, to provide the explicit description of the $1$-cohomology for the scheme-theoretic kernels with coefficients in the induced modules and with coefficients in the simple modules for the Suzuki and Ree groups; second, to improve the known bounds for identifying extensions for the Ree groups of type $F_4$ with the ones for the algebraic group.

We describe the structure of the paper. In Section \ref{Sect2} we fix some notation and remind the reader of certain facts regarding the structure of the Suzuki and Ree groups. In particular, for $G$ with root system $\Phi$, in cases $(\Phi,p)=(C_2,2)$, $(G_2, 3)$ or $(F_4,2)$, there exists a fixed purely inseparable isogeny $\tau : G \to G$ whose square is the Frobenius map. Then the strict endomorphism $\sigma$ is given by $\sigma = \tau^r=F^{r/2}$, for an odd positive integer $r$. In these cases, following \cite[3.3]{BT}, we shall to refer to $\sigma$ as an exceptional isogeny. Thus the fixed point set under $\sigma$ becomes a Suzuki-Ree group and we denote the scheme-theoretic kernel $G_{\sigma}$ by $G_{r/2}$. To differentiate it from the classical case, we call this infinitesimal subgroup of $G$ an exotic or half Frobenius kernel. 

In Section \ref{Sect3} we compute the $1$-cohomology for the exotic Frobenius kernels with coefficients in the induced modules, $\opH^1(G_{r/2},\opH^0(\lambda))$, for the Suzuki groups (Subsection \ref{subsecC2}), the Ree groups of type $G_2$ (Subsection \ref{subsecG2}) and of type $F_4$ (Subsection \ref{subsecF4}). Moreover, we calculate the $1$-cohomology for the classical (Theorem \ref{c2gs-simple}, Theorem \ref{g2gs-simple}, Theorem \ref{f4gs-simple}) and exotic Frobenius kernels with coefficients in simple modules (Theorem \ref{c2gr2-simple}, Theorem \ref{g2gr2-simple}, Theorem \ref{f4gr2-simple}). 

Then, in Section \ref{Sect4}, we focus on the Ree groups of type $F_4$. We consider a certain truncation of $\Ind_{G(\sigma)}^G k$ and relate the finite group cohomology to the algebraic group cohomology.
In Section \ref{subsec41}, we precisely bound the weights in our truncated category (see Lemma \ref{bnp52}), performing many spectral sequence computations involving half Frobenius kernels, instead of the classical ones. Thus, we ensure the sharpness of our bound on the size of the finite group using these methods. We observe, rather surprisingly, that the Ree groups of type $F_4$ exhibit very different behaviour compared to the other finite groups of Lie type (be it Chevalley, twisted or, indeed, Suzuki or Ree groups of type $G_2$). 

In order to see this, first recall some of the terminology used in \cite{BNP04b}, \cite{BNP+12} and \cite{PSS13}. Let $\mathcal{C}_t$ be the full subcategory of all finite-dimensional $G$-modules whose composition factors $L(\nu)$ have highest weights in the set $\pi_t=\lbrace \nu \in X_{+} : \langle \nu, \alpha_0^{\vee} \rangle < t \rbrace.$ The weight $\nu \in \pi_t$ is $(t-1)$-small. 

Now, let $\sigma$ denote the appropriate strict endomorphism, as discussed above. In Remark \ref{different}(a), we observe that in the case $G=F_4$, $p=2$, $\sigma=F^{r/2}$ for $r$ odd, $\Ext^1_{G_{\sigma}}(L(\lambda), L(\mu))^{(-\sigma)}$ is a rational $G$-module whose weights are $(h+4)$-small. This is in contrast to \cite[Theorem 2.3.1]{BNP+12},  which states that for all $(G, p, \sigma)$ aside from the case we consider, one has that $\Ext^1_{G_{\sigma}}(L(\lambda), L(\mu))^{(-\sigma)}$, for $\lambda, \mu \in X_{\sigma}$ is a rational $G$-module whose weights are $(h-1)$-small. This comes as a surprise, given the fact that similar methods were used.  

In Section \ref{subsec42}, we turn our attention to finite group extensions. We find that self-extensions between simple $kG(\sigma)$-modules vanish, provided $r \geq 15$. (Theorem \ref{self}). Finally, in Theorem \ref{ext}, we conclude that, for $r \geq 15$, the $\Ext^1$ group between simple $kG(\sigma)$-modules is isomorphic to the $\Ext^1$ group between a specific pair of $\sigma$-restricted simple $G$-modules (which depends on the pair of $kG(\sigma)$-modules). 

Some of the results in this section are developments of the unpublished note \cite{Ste13}, and we reproduce and improve the proofs for the benefit of the literature.  We underscore the fact that our results allow for computations with exotic Frobenius kernels, as opposed to classical ones, and improve upon the bounds on the size of the finite group given in \cite{Ste13}.

{\bf Acknowledgements}:
The author wishes to thank her PhD supervisor Dr David I. Stewart for the guidance throughout. 
\section{Preliminaries} \label{Sect2}
\subsection{Notation}
We assume the following notation. 

Let $G$ be a simple, simply connected algebraic group over an algebraically closed field $k$ of characteristic $p>0$. Let $\sigma : G \rightarrow G$ be a surjective endomorphism of $G$ such that the fixed point set $G(\sigma)$ is a finite group of Lie type. Then, let $G_{\sigma}$ denote the scheme-theoretic kernel of $\sigma.$ 

We denote by $\T$ a maximal split torus in $G$ and let $\Phi$ be the corresponding root system; let $\Pi = \{ \alpha_1,..., \alpha_n \}$ be the set of simple roots in the Bourbaki ordering \cite[Planches]{Bou82} and $\alpha_0$ the maximal short root. Let $\B$ denote a Borel subgroup containing $\T$, corresponding to the negative roots, and let $U$ denote its unipotent radical. For our choice of $\sigma$, all these subgroups can be chosen to be $\sigma$-invariant.

Let $\langle \cdot, \cdot \rangle$ be the standard inner product on the Euclidean space $\mathbb{E}:=\mathbb{Z}\Phi \otimes_{\mathbb{Z}} \mathbb{R}$. Then, let $\alpha^{\vee}=\frac{2 \alpha}{\langle \alpha, \alpha \rangle}$ be the coroot of $\alpha \in \Phi$ and let $h$ be the Coxeter number of the root system. 

We have the weight lattice  $X(T)=X= \bigoplus \mathbb{Z} \omega_i$, for $\omega_i$ the fundamental dominant weights satisfying $\langle \omega_i, \alpha_j^{\vee} \rangle = \delta_{ij}$, for $\alpha_j$ a simple root. Then $X_{+}$ is the cone of dominant weights. 

Let $W$ be the Weyl group of $\Phi$, generated by the set of simple reflections $\{s_{\beta} : \beta \in \Pi \}$. For $\alpha \in \Phi,$ $s_{\alpha}: \mathbb{E} \to \mathbb{E}$ is the orthogonal reflection in the hyperplane $H_{\alpha} \subset \mathbb{E}$ of vectors orthogonal to $\alpha$. Write $\ell:W \to \mathbb{N}$ for the standard length function on $W$: for $w \in W$, $\ell(w)$ is the minimum number of simple reflections required to write $w$ as a product of simple reflections. Moreover, note that $W$ acts naturally on $X(T)$ via the dot action. (cf. \cite[II.1.5]{Jan03}) 

Let $\lambda^{\ast}= - w_{0} \lambda$, with $w_{0}$ the longest word in the Weyl group $W$ and $\lambda \in X(T).$ For the remainder of this paper, we have 
$\lambda^{\ast}= \lambda$, as we only consider root systems of type $C_2$, $G_2$ or $F_4$, for which $w_0=-1$ (cf. \cite[Planches III, VIII, IX]{Bou82}). The irreducible $G$-modules are indexed by the dominant weights, so that $L(\lambda)$ is the finite-dimensional irreducible module of highest weight $\lambda \in X_{+}$; moreover, the irreducible modules are self-dual in this case. Consider $\lambda \in X_{+}$ as a one-dimensional $\B$-module and let $\opH^0(\lambda) = \Ind_{\B}^{G} \lambda$ be the induced module; note that since $G/\B$ is a projective variety, this module is finite-dimensional. We also have that the Weyl module $\mathrm{V}(\lambda) \cong \opH^0(\lambda^{\ast})^{\ast}.$

\subsection{The Suzuki and Ree groups} \label{prelsuzree}

Let $G$, $\sigma$ and $G(\sigma)$ be defined as above. Now and for the remainder of the paper, $G$ is of type $C_2$ ($p=2$), $G_2$ ($p=3$) or $F_4$ ($p=2$). Let $\tau : G \rightarrow G$ be the fixed purely inseparable isogeny defined such that $\tau^2 =F$, where $F$ denotes the Frobenius endomorphism of $G$. For a positive integer $s$, if we set $r=2s$, then $\sigma = F^s$ and $G(\sigma)$ is the split Chevalley group. If, however, we set $r=2s+1$, $\sigma = \tau \circ F^s = \tau^{r}$ and $G(\sigma)$ is one of $\prescript{2}{}C_2(2^{2s+1})$, $\prescript{2}{}G_2(3^{2s+1})$ or $\prescript{2}{}F_4(2^{2s+1})$. 
From this point onwards, we have $r=2s+1$ and we shall use $\sigma$ and $\tau^{r}$ interchangeably. 

Given $\sigma$ and a rational $G$-module $M$, let $M^{(\sigma)}$ denote the twist of the module, obtained by precomposing the action map with $\sigma$. We may also define the untwist, $M^{(-\sigma)}$, if $G_{\sigma}$ acts trivially on $M$ \cite[I.9.10]{Jan03}. 

Let $X_1 = \{ \lambda \in X_{+} :  \langle \lambda, \alpha_{i}^{\vee} \rangle < p, \alpha_i \in \Pi \}$ be the set of $p$-restricted weights and we may define the $\tau$-restricted ones, $X_{\tau} \subset X_1$, as the subset of ones orthogonal to the long simple roots. As a result, we have the following condition for $X_{r/2}$-restricted weights: if $r$ even, then $X_{\sigma}=X_s$; if $r$ is odd, then we require that $\langle \lambda, \alpha_{i}^{\vee} \rangle < p^{s+1}$, for $\alpha_i \in \Pi $ short, and $\langle \lambda, \alpha_{i}^{\vee} \rangle < p^{s}$, for $\alpha_i \in \Pi $ long. 

We then have that any dominant weight $\lambda$ may be uniquely expressed as $\lambda = \lambda_0 + \tau^r \lambda_1$, for $\lambda_0 \in X_{r/2}$ and $\lambda_1 \in X_{+}.$ In fact, we have an analogue of Steinberg's Tensor Product Theorem, with $\tau$ in place of $F$; namely, $L(\lambda) \cong L(\lambda_0) \otimes L(\lambda_1)^{(r/2)}$ \cite[Theorem 12.2]{Ste63}.

\subsubsection*{Frobenius kernels}

We turn our attention to the scheme-theoretic kernel $G_{\sigma}=G_{r/2}$ of $\sigma$; many of the results concerning classical Frobenius kernels also hold in this case, and we refer the interested reader to \cite[Remark 2.2.1]{BNP+12} for a more detailed discussion. For our purposes, we have that $G_{r/2}$ is a normal subgroup scheme and $G/G_{r/2} \cong G^{(r/2)}$. We also note that when $r$ is odd, $G_{r/2}/G_{1/2} \cong G_{\frac{r-1}{2}}$ is a classical Frobenius kernel. Then, observe that by \cite[2.4]{BNP04a}, the Steinberg module $\mathrm{St}_{r/2}$, of highest weight $(\tau^r-1)\rho$, is injective as a $G_{r/2}$-module. 

Moreover, since $B,T$ and $U$ are subgroups of $G$, by \cite[I.9.5]{Jan03}, it follows that $B_{r/2}= B \cap G_{r/2}$, $U_{r/2}= U \cap G_{r/2}$ and $T_{r/2}= T \cap G_{r/2}$. Thus, we may consider the Frobenius kernels $B_{r/2}$, $U_{r/2}$ and $T_{r/2}$, which are also normal subgroup schemes of the groups $B, U, T$ respectively. 

These various normal subgroups give rise to Lyndon--Hochschild--Serre (LHS) spectral sequences of which we make significant use throughout the paper.

\subsubsection*{Spectral sequences} 

We recall some of the key facts about spectral sequences, for the unfamiliar reader. (See \cite{McC} or \cite{Jan03} for an exhaustive treatment.) Let $\mathcal{C}$ be an abelian category; then, a spectral sequence (of cohomological type) consists of a family of bigraded objects $E_{n} = \bigoplus_{i,j \in \mathbb{Z}} E_n^{i,j}$ of $\mathcal{C}$ and differentials of bidegree $(n,-n+1)$, $d_n:E_n^{i,j} \rightarrow E_n^{i+n,j-n+1}$ and $d_{n}:E_n^{i-n,j+n-1} \rightarrow E_n^{i,j} $, which satisfy $d_n \circ d_n =0.$ We require
\begin{displaymath}
E_{n+1}^{i,j} \cong \opH(E_n^{i,j}) \cong \frac{\mathrm{ker}(d_n:E_n^{i,j} \rightarrow E_n^{i+n,j-n+1})}{\mathrm{im}(d_{n}:E_n^{i-n,j+n-1} \rightarrow E_n^{i,j})}.
\end{displaymath}
The collections $(E_n^{i,j})_{i,j}$ for fixed $n$ are known as the sheets of the spectral sequence, and we move to the next one by taking cohomology, using the isomorphism above. 

We say that the spectral sequence converges if, for every pair $(i,j)$, $E_n^{i,j}$ eventually stabilises as $n \rightarrow \infty$, and we denote the stable value by $E_{\infty}^{i,j}$. Furthermore, $\lbrace E_n, d_n \rbrace$ is a first quadrant spectral sequence if $E_n^{i,j}=0$ if $i<0$ or $j<0$ and we know that such sequences converge. 

By  \cite[6.5 and 6.6.(3)]{Jan03},  for $H_1, H_2$ algebraic $k$-groups, such that $H_2 \lhd H_1$, we have a first quadrant Lyndon-Hochschild-Serre spectral sequence
for each $H_1$-module $M$
\begin{displaymath}
	E_{2}^{i,j} = \opH^i(H_1/H_2, \opH^j(H_2, M)) \Rightarrow \opH^{i+j}(H_1, M).
	\end{displaymath}
Note that since $H_2 \lhd H_1$, $H_2$ is exact in $H_1$ and the category of $H_1/H_2$-modules is abelian. 

In particular, in this paper, we use the LHS spectral sequences corresponding to $B_{1/2} \lhd B_{r/2}$ and $G_{r/2} \lhd G$.


\section{Cohomology for the Frobenius Kernels} \label{Sect3}

In this section we compute the 1-cohomology for the Frobenius kernels of the induced modules for the Suzuki and Ree groups. Thus in this section $G$ is a simply-connected algebraic group of type $C_2$ (\ref{subsecC2}), $G_2$ (\ref{subsecG2}) and $F_4$ (\ref{subsecF4}).

We fix now and for the remainder of the paper a positive odd integer $r=2s+1$, with a view to calculating invariants of $G_{(r/2)}=\ker:\tau^r=\sigma:G\to G$.

\subsection{Preliminaries} \label{ssecFK}

We adapt methods of Jantzen \cite{Jan91} in order to compute the $B_{\tau}$-cohomology. Then, based on an argument in \cite{BNP04b}, we extend the results from $B_{\tau}$ to $B_{r/2}$; using an analogue of \cite[II.12.2(2)]{Jan03} for exotic Frobenius kernels, we obtain $\opH^1(G_{r/2}, \opH^0(\lambda))$. Moreover, we extend the $G_1$-cohomology results computed in \cite{Sin94b} to calculate $\opH^1(G_{s}, L(\lambda))$, for a positive integer $s$ and $\lambda \in X_{s}(T)$ and $\opH^1(G_{r/2}, L(\lambda))$, for $\lambda \in X_{r/2}(T)$. 

Since $G$ is simply connected, there exists a Chevalley basis for the Lie algebra $\gl_{\mathbb{Z}}$, which may be reduced modulo $p$ to obtain the restricted Lie algebra $\gl = \Lie(G).$ We write $\gl = \gl_{\mathbb{Z}} \otimes_{\mathbb{Z}} k$, where $\gl_{\mathbb{Z}} = \{ X_{\alpha}, \alpha \in \Phi, H_{\alpha}= [X_{\alpha}, X_{-\alpha}], \alpha \in \Pi \}$. Hence, suppose that $\alpha, \beta$ are roots with $\alpha+\beta$ also a root, with the associated root vectors $X_{\alpha}$, $X_{\beta}$ and $X_{\alpha + \beta}$, respectively, in $\gl_{\mathbb{Z}}$. It follows that the commutator $[X_{\alpha}, X_{\beta}] = N_{\alpha\beta} X_{\alpha + \beta}$, for some integer $N_{\alpha\beta}$ (with possible values $0, \pm 1, \pm 2, \pm 3$). 

Abusing notation, we shall also denote the element $X_{\alpha} \otimes 1$ of $\gl$ by $X_{\alpha}$. Moreover, upon reduction modulo $p$, whenever we have $\alpha, \beta$ two short roots whose sum is a long root, the structure constant $N_{\alpha,\beta}$ will vanish. 

Recall $(\Phi,p)$ is special, and therefore there exists a special isogeny $\tau$, satisfying $\tau^2=F$, the Frobenius map. This interacts with the root system in the following way. There is a subsystem of short roots denoted $\Phi_{s}$. In case $(G,p)=(C_2,2)$, $(G_2,3)$ and $(F_4,2)$ respectively, $\Phi_s$ is of type $A_1A_1$, $A_2$ and $D_4$, respectively. Degeneracies in the commutator relations in our specific characteristics guarantee Lie subalgebras $\mathfrak{g}_s$ wih root system $\Phi_s$ which are generated by the root vectors corresponding to the elements of $\Phi_s$, and maximal rank subgroups of $G$ whose root system is $\Phi_s$. The kernel of $d\tau$ is $\mathfrak{g}_s$, hence we write $\mathfrak{g}_\tau$ for this ideal. The kernel $G_\tau$ of $\tau$ is an infinitesimal group scheme of height one, whose representation theory is equivalent to the one of the restricted Lie algebra $\mathfrak{g}_{\tau}$. Since $U$ is $\tau$-stable, we get also a kernel $U_{\tau}$, whose Lie algebra $\mathfrak{u}_{\tau}$ is the ideal in $\mathfrak{u}$ generated by negative short roots. We obtain an analogue of \cite[Lemma 2.1]{Jan91}, whose proof is identical:
\begin{lemma}
	We have an isomorphism of $B$-modules
	\begin{displaymath}
	\opH^1(U_{\tau}, k ) \cong \opH^1(\mathfrak{u}_{\tau}, k) \cong (\mathfrak{u}_{\tau}/\left[ \mathfrak{u}_{\tau}, \mathfrak{u}_{\tau} \right])^{*}, 
	\end{displaymath}
	where $\mathfrak{u}_{\tau} = \Lie(U_{\tau}) = \left\langle X_{\beta} :  \beta \in \Phi_{s}^{-}\right\rangle $.
\end{lemma}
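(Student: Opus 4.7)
The plan is to mirror the argument in \cite[Lemma 2.1]{Jan91}, adapted to the kernel of $\tau$ instead of the Frobenius $F$. The two isomorphisms have different flavours: the first identifies rational cohomology of an infinitesimal group scheme with restricted Lie algebra cohomology, while the second is the familiar Chevalley--Eilenberg computation of the degree-one ordinary Lie algebra cohomology of a nilpotent Lie algebra with trivial coefficients. The bridge between them is that the restricted and ordinary $1$-cohomologies of $\mathfrak{u}_\tau$ with trivial coefficients coincide.

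First I would note that $U_\tau \subseteq G_\tau$ is an infinitesimal group scheme of height one: since $\tau^2 = F$, its kernel lies in the first Frobenius kernel, hence $\mathrm{Dist}(U_\tau)$ is the restricted enveloping algebra $u(\mathfrak{u}_\tau)$ by \cite[I.9.6]{Jan03}. Therefore $\opH^1(U_\tau, k) \cong \mathrm{Ext}^1_{u(\mathfrak{u}_\tau)}(k,k)$, and with trivial coefficients this space of restricted extensions is precisely the space of $k$-linear maps $\delta : \mathfrak{u}_\tau \to k$ satisfying $\delta([x,y])=0$ and $\delta(x^{[p]}) = 0$ for all $x,y \in \mathfrak{u}_\tau$.

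Next I would argue that the $p$-power condition is automatic. For each $\beta \in \Phi_s^-$, the element $p\beta$ is not a root, so $X_\beta^{[p]} = 0$ in $\mathfrak{u}_\tau$. Jacobson's formula expresses $\bigl(\sum a_\beta X_\beta\bigr)^{[p]}$ as $\sum a_\beta^p X_\beta^{[p]}$ plus $(p-1)$-fold iterated commutators of the $a_\beta X_\beta$, and hence $x^{[p]} \in [\mathfrak{u}_\tau, \mathfrak{u}_\tau]$ for every $x \in \mathfrak{u}_\tau$. Consequently $\delta([x,y])=0$ already implies $\delta(x^{[p]})=0$, so
\[
\opH^1(U_\tau, k) \;\cong\; (\mathfrak{u}_\tau/[\mathfrak{u}_\tau,\mathfrak{u}_\tau])^{*}.
\]
The ordinary Lie algebra cohomology $\opH^1(\mathfrak{u}_\tau, k)$, computed by the Chevalley--Eilenberg complex with trivial coefficients, is by definition the same quotient, giving the middle isomorphism.

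Finally, $B$-equivariance follows because $U_\tau$ is normal in $B$ (as $U$ and $G_\tau$ are both $B$-stable under conjugation, so is their intersection), and each stage of the identification is functorial: the restricted enveloping algebra description, the Jacobson-formula argument, and the Chevalley--Eilenberg complex all respect the natural $B$-action on $\mathfrak{u}_\tau$ by conjugation, hence on the dual $(\mathfrak{u}_\tau/[\mathfrak{u}_\tau,\mathfrak{u}_\tau])^{*}$. The main subtlety, as in Jantzen's original argument, is verifying that the $p$-restricted and ordinary Lie algebra $1$-cohomology agree; this is where the root-system input ($X_\beta^{[p]} = 0$ for short negative roots) is essential, but once this is in hand the proof is entirely formal.
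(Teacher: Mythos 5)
Your proposal is correct and is essentially the argument the paper intends: the paper gives no proof of its own but declares the proof "identical" to \cite[Lemma 2.1]{Jan91}, and your reconstruction (height-one infinitesimal group scheme $\Rightarrow$ restricted Lie algebra cohomology, then $x^{[p]}\in[\mathfrak{u}_\tau,\mathfrak{u}_\tau]$ via Jacobson's formula and $X_\beta^{[p]}=0$ to pass to ordinary cohomology, then Chevalley--Eilenberg in degree one) is precisely Jantzen's argument transported from $U_1$ to $U_\tau$. The $B$-equivariance discussion via normality of $U_\tau=U\cap G_\tau$ in $B$ is also the right justification, so there is nothing to correct.
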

Here, $\Phi_{s}^{-}$ denotes the set of the negative roots of $\Phi_s$, the subsystem generated by the short roots.  

Analogously to \cite[Prop 2.2]{Jan91} we have:
\begin{lemma} \label{directsum}
Let $\beta_i$ be a set of simple roots of $\Phi_s$. Then,
\begin{displaymath}
\opH^1(U_{\tau}, k ) \cong \bigoplus_{i} k_{\beta_i}.
\end{displaymath}
\end{lemma}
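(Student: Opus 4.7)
The plan is to deduce the description of $\opH^1(U_\tau,k)$ directly from the preceding lemma, which identifies it with $(\mathfrak{u}_\tau/[\mathfrak{u}_\tau,\mathfrak{u}_\tau])^*$, by computing the abelianization of $\mathfrak{u}_\tau$ explicitly.

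First I would recall the structural input from Subsection \ref{prelsuzree}: in each of the three cases $(\Phi,p)=(C_2,2),(G_2,3),(F_4,2)$ the subsystem $\Phi_s$ of short roots is simply-laced, of type $A_1\times A_1$, $A_2$ and $D_4$ respectively. In the Chevalley basis, $\mathfrak{u}_\tau=\langle X_\beta:\beta\in\Phi_s^-\rangle$ carries brackets $[X_\beta,X_\gamma]=N_{\beta,\gamma}X_{\beta+\gamma}$ whenever $\beta+\gamma$ is a root; the vanishing of $N_{\beta,\gamma}$ when $\beta,\gamma$ are short with $\beta+\gamma$ long (a feature of the special characteristics, recalled just before the previous lemma) ensures $[\mathfrak{u}_\tau,\mathfrak{u}_\tau]\subseteq\mathfrak{u}_\tau$.

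Next I would identify the commutator subalgebra as $[\mathfrak{u}_\tau,\mathfrak{u}_\tau]=\mathrm{span}_k\{X_\delta:\delta\in\Phi_s^-\setminus(-\Pi_s)\}$, where $\Pi_s=\{\beta_i\}$ is a chosen base of $\Phi_s^+$. The inclusion $\subseteq$ is immediate from the bracket formula, since a simple root of $\Phi_s$ cannot be expressed as a sum of two roots of the same sign in $\Phi_s$. For the reverse inclusion I would argue by induction on the height inside $\Phi_s$: every non-simple positive root $\delta'\in\Phi_s^+$ decomposes as $\delta'=\beta_i+\gamma'$ with $\beta_i\in\Pi_s$ and $\gamma'\in\Phi_s^+$, and simple-lacedness of $\Phi_s$ forces $N_{-\beta_i,-\gamma'}=\pm 1\neq 0$ in $k$, so $X_{-\delta'}\in[\mathfrak{u}_\tau,\mathfrak{u}_\tau]$.

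Consequently the quotient $\mathfrak{u}_\tau/[\mathfrak{u}_\tau,\mathfrak{u}_\tau]$ has a basis $\{\overline{X_{-\beta_i}}:\beta_i\in\Pi_s\}$, with $T$ acting by the weight $-\beta_i$ on $\overline{X_{-\beta_i}}$. Dualising reverses the signs of the weights, yielding the isomorphism $(\mathfrak{u}_\tau/[\mathfrak{u}_\tau,\mathfrak{u}_\tau])^*\cong\bigoplus_ik_{\beta_i}$; combined with the preceding lemma this gives the result. The main (minor) obstacle is keeping track of the two opposing inputs on the structure constants --- their vanishing in the special characteristics when short-plus-short is long, and their non-vanishing inside the simply-laced $\Phi_s$; once these are in place the calculation reduces to the well-known identification of the abelianization of a positive nilradical in a simply-laced Lie algebra.
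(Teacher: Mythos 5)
Your argument is correct and follows essentially the same route as the paper: identify $[\mathfrak{u}_\tau,\mathfrak{u}_\tau]$ as the span of the root vectors for the non-simple roots of $\Phi_s^-$ and dualise. One small caveat: the structure constants of $\mathfrak{g}_s$ are inherited from the Chevalley basis of the ambient $\mathfrak{g}$, so the relevant root strings live in the non-simply-laced system $\Phi$ rather than in $\Phi_s$; in type $G_2$ this gives $N_{-\beta_i,-\gamma'}=\pm 2$ rather than $\pm 1$, which is still non-zero since $p=3$, so your conclusion stands even though ``simple-lacedness forces $N=\pm 1$'' is not quite the right justification.
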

\begin{proof}
The subalgebra
	$\left[ \mathfrak{u}_{\tau}, \mathfrak{u}_{\tau} \right]$ is spanned by all commutators $\left[X_{\alpha}, X_{\beta} \right] =N_{\alpha,\beta} X_{\alpha,\beta}$, for $\alpha,\beta$ negative short roots. Moreover, $N_{\alpha,\beta}\neq 0$ if and only if $\alpha+\beta$ is a short root. Using this, one checks $\left[ \mathfrak{u}_{\tau}, \mathfrak{u}_{\tau} \right]$ is spanned by root vectors corresponding to non-simple roots. Thus $\mathfrak{u}_\tau/\left[ \mathfrak{u}_{\tau}, \mathfrak{u}_{\tau} \right]$ has a basis with elements the classes of $X_{-\beta_i}$, being the weight vectors for $T_{\tau}$ for weights $-\beta_i$. The result follows by dualising.
\end{proof}	
As discussed in Subsection \ref{prelsuzree}, $B_{\tau}$ acts trivially on the weight module $k_{\tau(\lambda)} \cong k_\lambda ^{\tau}$.
Then, we obtain:
\begin{lemma} \label{Ttau}
For $\lambda \in X_{r/2}$ and $\beta_i$ simple roots of $\Phi_s$, there exist the following isomorphisms
\begin{equation*}
\begin{aligned}
\opH^1(B_{\tau}, \lambda) & \cong \left[ \opH^1(U_{\tau}, k ) \otimes k_{\lambda} \right]^{T_{\tau}} \\
& \cong \left[ \bigoplus_{i} k_{\beta_i + \lambda } \right]^{T_{\tau}}.
\end{aligned}
\end{equation*}
\end{lemma}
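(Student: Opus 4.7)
The plan is to apply the Lyndon--Hochschild--Serre spectral sequence for the normal inclusion $U_\tau \lhd B_\tau$, whose quotient is $T_\tau$. Concretely, first I would invoke the spectral sequence
\begin{displaymath}
E_2^{i,j} = \opH^i(T_\tau, \opH^j(U_\tau, k_\lambda)) \Rightarrow \opH^{i+j}(B_\tau, k_\lambda).
\end{displaymath}
The key observation is that $T_\tau$ is an infinitesimal subgroup scheme of the torus $T$, hence is multiplicative (diagonalisable); as the category of modules over a diagonalisable group scheme is equivalent to the category of graded vector spaces, it is semisimple, so $\opH^i(T_\tau, -) = 0$ for all $i > 0$ (see \cite[I.4.3, I.8.17]{Jan03}). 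Hence the spectral sequence collapses onto the vertical axis, yielding
\begin{displaymath}
\opH^n(B_\tau, k_\lambda) \cong \opH^n(U_\tau, k_\lambda)^{T_\tau}
\end{displaymath}
for every $n$, and in particular for $n=1$.

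Next, since $U_\tau \subseteq U$ and $U$ acts trivially on any one-dimensional $B$-module coming from a character of $T$, the $U_\tau$-action on $k_\lambda$ is trivial. Together with the fact that $k_\lambda$ is one-dimensional, this gives a $T_\tau$-equivariant isomorphism $\opH^1(U_\tau, k_\lambda) \cong \opH^1(U_\tau, k) \otimes k_\lambda$, which produces the first isomorphism in the statement. Substituting the description of $\opH^1(U_\tau,k)$ from Lemma~\ref{directsum} and using that the tensor product of characters adds the weights then produces
\begin{displaymath}
\bigl[\opH^1(U_\tau, k) \otimes k_\lambda\bigr]^{T_\tau} \cong \Bigl[\bigoplus_i k_{\beta_i} \otimes k_\lambda\Bigr]^{T_\tau} \cong \Bigl[\bigoplus_i k_{\beta_i + \lambda}\Bigr]^{T_\tau},
\end{displaymath}
giving the second isomorphism.

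There is no real obstacle here; the only point that requires a moment's care is the vanishing of higher $T_\tau$-cohomology, which hinges on recognising $T_\tau$ as a subgroup scheme of the torus $T$ (hence diagonalisable) rather than misreading it as a unipotent infinitesimal scheme. The hypothesis $\lambda \in X_{r/2}$ is not actually used in this argument and is retained for consistency with the surrounding setup, since the eventual applications of the lemma run over this range of weights.
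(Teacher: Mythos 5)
Your proof is correct and follows exactly the standard Jantzen-style argument the paper is implicitly invoking (it states this lemma without a written proof, as the analogue of \cite[Lemma 2.1, Prop.\ 2.2]{Jan91} combined with the LHS sequence for $U_\tau \lhd B_\tau$): the quotient $T_\tau$ is diagonalisable, so the spectral sequence collapses to give $\opH^1(B_\tau,\lambda)\cong \opH^1(U_\tau,\lambda)^{T_\tau}$, and triviality of the $U_\tau$-action on the character $k_\lambda$ plus Lemma~\ref{directsum} gives the rest. You are also right that the hypothesis $\lambda\in X_{r/2}$ is not needed for the isomorphism itself.
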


Since any weight $\lambda$ can be uniquely written as $\lambda = \lambda_{0} + \tau(\lambda_{1})$, for $\lambda_0 \in X_{\tau}(T)$ and $\lambda_1 \in X(T)$, we have $\opH^1(B_{\tau}, \lambda) \cong \opH^1(B_{\tau}, \lambda_{0}) \otimes  \tau(\lambda_{1})$. In particular, when $\lambda$ is $r/2$-restricted, we have $\lambda = \lambda_{0} + \tau(\lambda_{1})$, for $\lambda_0 \in X_{\tau}(T)$ and $\lambda_1 \in X_s(T)$.  Thus, it suffices to compute $\opH^1(B_{\tau}, \lambda_{0})$, for $\lambda_{0} \in X_{\tau}(T)$. 

Considered as a $T$-module, $\opH^1(U_{\tau}, k) \otimes \lambda_{0}$ is the direct sum of certain $k_{\beta_i + \lambda_{0}}$, for $\beta_i$, as previously defined. Such a summand yields a non-zero contribution to  $\opH^1(B_{\tau}, \lambda_{0})$ if and only if $\beta_i + \lambda_{0} \in \tau X(T)$. Hence, the problem boils down to checking which of these weights belong to $\tau X(T)$.

Once we have established the appropriate $B_{r/2}$-cohomology, the next result yields the $G_{r/2}$-cohomology with coefficients in the induced modules.
\begin{lemma}
Let $\lambda \in X(T)_{+}$. Then 
    \begin{equation} \label{indBG}
    \opH^{1}(G_{r/2}, \opH^{0}(\lambda))^{(-r/2)} \cong \Ind_{B}^{G}(\opH^{1}(B_{r/2}, \lambda)^{(-r/2)}).
\end{equation}
\end{lemma}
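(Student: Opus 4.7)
The statement is the exotic-isogeny analogue of Jantzen's formula \cite[II.12.2(2)]{Jan03}, and the plan is to transplant that proof to the present setting. The ingredients needed---normality of $G_{r/2}$ in $G$ with $G/G_{r/2} \cong G^{(r/2)}$, the analogue for $B_{r/2}$ in $B$, and Kempf vanishing---are all in place from Subsection~\ref{prelsuzree}.

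First, I would establish a natural isomorphism of functors from the category of $B$-modules to that of $G^{(r/2)}$-modules,
\begin{equation*}
(-)^{G_{r/2}} \circ \Ind_B^G \;\cong\; \Ind_{B^{(r/2)}}^{G^{(r/2)}} \circ (-)^{B_{r/2}},
\end{equation*}
by noting that both sides are right adjoint to the forgetful-type functor sending a $G^{(r/2)}$-module $V$ to its underlying $B$-module via $B \onto B^{(r/2)} \into G^{(r/2)}$ (equivalently, via $B \into G \onto G^{(r/2)}$). Uniqueness of right adjoints gives the iso.

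Next, I would derive. For $\lambda \in X_+$, Kempf vanishing gives $R^q \Ind_B^G \lambda = 0$ for $q > 0$. Since $\Ind_B^G$ sends $B$-injectives to $G$-injectives, an injective resolution $\lambda \into I^{\bullet}$ of $B$-modules produces an injective resolution $\opH^0(\lambda) \into \Ind_B^G I^{\bullet}$ of $G$-modules. Taking $G_{r/2}$-fixed points and invoking the functor iso from the previous step expresses $\opH^{\bullet}(G_{r/2}, \opH^0(\lambda))$ as the cohomology of $\Ind_{B^{(r/2)}}^{G^{(r/2)}}\!\bigl((I^{\bullet})^{B_{r/2}}\bigr)$, yielding a Grothendieck spectral sequence
\begin{equation*}
E_2^{p,q} = R^p \Ind_{B^{(r/2)}}^{G^{(r/2)}}\!\bigl(\opH^q(B_{r/2},\lambda)\bigr) \;\Longrightarrow\; \opH^{p+q}\bigl(G_{r/2}, \opH^0(\lambda)\bigr).
\end{equation*}
The five-term exact sequence reduces the degree-$1$ claim to showing $E_2^{1,0} = E_2^{2,0} = 0$. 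Now $\opH^0(B_{r/2}, \lambda)$ is either zero (when $B_{r/2}$ acts nontrivially on $k_{\lambda}$, i.e.\ $\lambda \notin \tau^{r}X(T)$) or the one-dimensional $B^{(r/2)}$-module $\lambda^{(-r/2)}$, which is dominant for $G^{(r/2)}$ because $\lambda \in X_+$; a second application of Kempf vanishing, now on $G^{(r/2)}/B^{(r/2)}$, kills $R^p \Ind_{B^{(r/2)}}^{G^{(r/2)}}$ for $p \geq 1$. Hence $\opH^1(G_{r/2}, \opH^0(\lambda)) \cong \Ind_{B^{(r/2)}}^{G^{(r/2)}} \opH^1(B_{r/2}, \lambda)$ as $G^{(r/2)}$-modules; untwisting by $(-r/2)$ then delivers \eqref{indBG}.

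The only step that is not purely formal is the second Kempf vanishing, but this is no real obstacle: $G^{(r/2)}$ is abstractly isomorphic to $G$ as an algebraic group (only the embedding into $G$-mod is twisted by $\sigma$), so Kempf's theorem applies verbatim. The twist--induction compatibility used at the final step is standard, following from the matching quotient maps $G \onto G^{(r/2)}$ and $B \onto B^{(r/2)}$ established in Subsection~\ref{prelsuzree}.
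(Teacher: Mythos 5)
Your proposal is correct and follows essentially the same route as the paper: the paper simply cites the spectral sequence $E_2^{i,j}=R^i\Ind_B^G \opH^j(B_{r/2},\lambda)^{(-r/2)}\Rightarrow \opH^{i+j}(G_{r/2},\opH^0(\lambda))^{(-r/2)}$ from \cite[Remark 2.2.1, (2.2.3)]{BNP+12} where you reconstruct it via the composite-functor/adjunction argument, and the decisive step --- killing the $E^{n,0}$ terms by observing that $\opH^0(B_{r/2},\lambda)$ is either zero or a twist of a dominant weight to which Kempf vanishing applies --- is identical in both.
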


\begin{proof}
By \cite[Remark 2.2.1, (2.2.3)]{BNP+12}, there exists a spectral sequence 
\begin{displaymath}
E^{i,j}_2= R^i \Ind^{G}_{B} \opH^j(B_{r/2}, \lambda)^{(-r/2)} \Rightarrow \opH^{i+j=n}(G_{r/2}, \opH^0(\lambda))^{(-r/2)},
\end{displaymath}
for $\lambda \in X_{+}$ viewed as a one-dimensional $B$-module, giving rise to the corresponding five-term exact sequence
\begin{displaymath}
0 \to E^{1,0}_2 \to E^1_2 \to E^{0,1}_2 \to E^{2,0}_2 \to E^2_2.
\end{displaymath}
Following the programme in \cite[II.12.2]{Jan03}, suppose first $\lambda \notin \tau^{r}X(T)$. Then $\opH^0(B_{r/2},\lambda)=0$, forcing $E^{n,0}_2=0$. Otherwise, $\lambda \in \tau^{r}X(T)$ and so we may write $\lambda=\tau^r\lambda'$ for some $\lambda' \in X(T)_{+}$. Then
\begin{displaymath}
E^{n,0}_2=R^n \Ind_{B}^G \opH^0(B_{r/2},\lambda)^{(-r/2)}\cong R^n \Ind_{B}^G \lambda'=0,
\end{displaymath}
for $n>0$, by Kempf's vanishing theorem (cf. \cite[II.4.5]{Jan03}). Therefore, $E^1_2 \cong E_2^{0,1}$, as required.
\end{proof}
By Kempf's vanishing theorem, $\opH^{0}(\lambda)=\Ind_{B}^{G} \lambda$ is zero unless $\lambda$ is dominant. For $\lambda \in X(T)_{+}$, one may use the preceding computations of $B_{r/2}$-cohomology to compute \linebreak $\opH^{1}\left(G_{r/2}, \opH^{0}(\lambda)\right)$ thanks to the isomorphism in (\ref{indBG}).

Moreover, one can make use of the  $G_1$-cohomology with coefficients in simple modules, computed in \cite[Proposition 2.3, 3.5, 4.11]{Sin94b}, to calculate $\opH^1(G_{s}, L(\lambda))$, for a positive integer $s$ and $\lambda \in X_{s}(T)$. Having established the $G_s$-cohomology, applying the LHS spectral sequence corresponding to $G_s \lhd G_{r/2}$ to compute $\opH^1(G_{r/2}, L(\lambda))$, for $\lambda \in X_{r/2}(T)$, completes the objectives set out for this section.

The remaining sections consider each case of $(\Phi,p)$ separately, computing the \linebreak $B_{r/2}$-cohomology and $G_{r/2}$-cohomology explicitly.


\subsection{$C_2$ in characteristic 2} \label{subsecC2}

Let $G$ be simply connected of type $C_2$ over $k$ of characteristic 2.
Following \cite[Planche III]{Bou82}, let $\Phi=\{ \pm 2 \epsilon_1, \pm 2\epsilon_2, \pm \epsilon_{1} \pm \epsilon_{2} \}$ be the roots of a system of type $C_2$. Writing $\epsilon_1=(1,0)$ and $\epsilon_2=(0,1)$, a base of simple roots is $\Pi:=\left\lbrace \alpha_{1}, \alpha_2 \right\rbrace$, with $\alpha_1=(1,-1)$ short, and  $\alpha_2=(0,2)$  long; furthermore, the corresponding fundamental dominant weights are $\omega_1=(1,0)$, $\omega_2=(1,1)$. One checks that a set of simple roots of $\Phi_s$ is $\Pi_s:=\left\lbrace \alpha_{1}, \alpha_1+\alpha_2 \right\rbrace $. We shall denote these simple roots by $\beta_1=\alpha_1=(1,-1)$, $\beta_2=\alpha_1+\alpha_2=(1,1)$. The special isogeny induces a $\mathbb{Z}$-linear map $\tau^{\ast} : X(T) \rightarrow X(T)$, under which $\omega_1 \mapsto \omega_2 \mapsto 2\omega_1.$ From now on, we abuse notation, writing $\tau$ instead of $\tau^{\ast}$. Thus, the $\tau$-restricted weights are $0$ and $\omega_1$.

\subsubsection*{$B_{\tau}$-cohomology}

Let $\lambda \in X_{r/2}$ be written as $\lambda = \lambda_{0} + \tau(\lambda_{1})$, for some $\lambda_1 \in X_s(T)$, such that $\opH^1(B_{\tau}, \lambda) \cong \opH^1(B_{\tau}, \lambda_{0}) \otimes  \tau(\lambda_{1})$. Thus, it suffices to compute $\opH^1(B_{\tau}, \lambda_{0})$, for $\lambda_{0} \in X_{\tau}(T)$. 

\begin{theorem} \label{btau-c2}
Let $\lambda_0 \in X_{\tau}(T)$. Then
\begin{equation*}\opH^{1}\left(B_{\tau}, \lambda_{0} \right) \cong\left\{\begin{array}{ll}
k_{\omega_2-\omega_1}^{(\tau)} \oplus k_{\omega_1}^{(\tau)} & \text { if } \lambda_{0}= k \\
0 & \text { else. }
\end{array}\right.\end{equation*}
\end{theorem}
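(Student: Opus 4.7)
The plan is to apply \cref{Ttau} directly: for $\lambda_0 \in X_\tau(T)$, we have
\[
\opH^1(B_\tau, \lambda_0) \cong \Bigl[\bigoplus_{i} k_{\beta_i + \lambda_0}\Bigr]^{T_\tau},
\]
where the $\beta_i$ run through a set of simple roots of $\Phi_s$. The problem then reduces to two tasks: (a) identify the $\beta_i$ in the present $C_2$-case, and (b) determine for each $\lambda_0 \in X_\tau(T) = \{0, \omega_1\}$ which of the weights $\beta_i + \lambda_0$ lie in $\tau X(T)$, since a one-dimensional $T_\tau$-module $k_\mu$ is trivial (hence contributes a $T_\tau$-fixed summand) exactly when $\mu \in \tau X(T)$.

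First I would record the description of $\tau$ on characters established in the preamble to this subsection: $\tau(\omega_1)=\omega_2$ and $\tau(\omega_2)=2\omega_1$, so $\tau X(T) = \mathbb{Z}\omega_2 \oplus 2\mathbb{Z}\omega_1$; equivalently, in $(\epsilon_1,\epsilon_2)$-coordinates, $(x,y) \in \tau X(T)$ iff $x-y$ is even. The simple roots of $\Phi_s$ have already been written down as $\beta_1=\alpha_1=2\omega_1-\omega_2$ and $\beta_2=\alpha_1+\alpha_2=\omega_2$. A direct check gives
\[
\beta_1 = 2\omega_1-\omega_2 = \tau(\omega_2-\omega_1), \qquad \beta_2 = \omega_2 = \tau(\omega_1),
\]
so both $\beta_1$ and $\beta_2$ already lie in $\tau X(T)$.

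Now the two cases: for $\lambda_0=0$, the computation just performed shows that both summands survive, and identifying them with their $\tau$-untwists yields
\[
\opH^1(B_\tau,k) \cong k_{\tau(\omega_2-\omega_1)} \oplus k_{\tau(\omega_1)} \cong k^{(\tau)}_{\omega_2-\omega_1} \oplus k^{(\tau)}_{\omega_1},
\]
exactly as claimed. For $\lambda_0=\omega_1$, one has $\beta_1+\omega_1=(2,-1)$ and $\beta_2+\omega_1=(2,1)$ in $\epsilon$-coordinates; since each satisfies $x-y$ odd, neither lies in $\tau X(T)$, so both summands vanish and $\opH^1(B_\tau,\omega_1)=0$.

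The only real obstacle is bookkeeping between the $(\omega_1,\omega_2)$ and $(\epsilon_1,\epsilon_2)$ coordinate systems together with the slightly unusual action of $\tau$ on characters, but these are small finite checks. Once the lattice $\tau X(T)$ is pinned down, the theorem is immediate from \cref{Ttau}.
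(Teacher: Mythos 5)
Your proposal is correct and follows essentially the same route as the paper: apply \cref{Ttau} (via \cref{directsum}) and check, for each $\lambda_0 \in \{0,\omega_1\}$, which of the weights $\beta_i+\lambda_0$ lie in $\tau X(T)$. The only cosmetic difference is that you phrase the membership test in $\epsilon$-coordinates (parity of $x-y$) rather than rewriting each weight in the fundamental-weight basis, and both checks give the same answer.
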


\begin{proof}
By Lemma \ref{directsum}, considered as a $T$-module, $\opH^1(U_{\tau}, k) \otimes \lambda_{0}$ is the direct sum of certain $k_{\beta_i + \lambda_{0}}$, for $\beta_i \in \Pi_s$. By Lemma \ref{Ttau}, such a summand yields a non-zero contribution to  $\opH^1(B_{\tau}, \lambda_{0})$ if and only if $\beta_i + \lambda_{0} \in \tau X(T)$. We now directly verify which of these weights belong to $\tau X(T)$.

First, suppose $\lambda_{0}=0$. Then, we have
	\begin{displaymath}
	\beta_1+0=\alpha_1=2\omega_1 -\omega_2=\tau(\omega_2-\omega_1).
	\end{displaymath}
	\begin{displaymath}
	\beta_2+0=\alpha_1+\alpha_2=\omega_2 =\tau(\omega_1).
	\end{displaymath}
	 Hence, $\opH^1(B_{\tau}, k) \cong \left[ \bigoplus_{i} k_{\beta_i + 0} \right]^{T_{\tau}} \cong   \left[ k_{\tau(\omega_2-\omega_1)} \oplus k_{\tau\omega_1} \right]^{T_{\tau}} \cong k_{\omega_2-\omega_1}^{(\tau)} \oplus k_{\omega_1}^{(\tau)}$. 
	
Now, suppose $\lambda_{0}=\omega_1$ and we obtain
	\begin{displaymath}
	\beta_1+\omega_1=3\omega_1-\omega_2 \notin \tau X(T).
	\end{displaymath}
	\begin{displaymath}
	\beta_2+\omega_1=\omega_2+\omega_1 \notin \tau X(T).
	\end{displaymath}
	 Then, $\opH^1(B_{\tau}, \omega_1) \cong \left[ \bigoplus_{i} k_{\beta_i + \omega_1} \right]^{T_{\tau}} = 0$, as neither belongs to $\tau X(T).$ 
\end{proof}
\subsubsection*{$B_{r/2}$-cohomology} In this subsection, we extend the calculations of the previous section to compute $\opH^1(B_{r/2}, \lambda)$, for $\lambda \in X_{r/2}(T)$.

First, we note that in this case, the calculation of $\opH^1(B_{r/2}, \lambda)$ requires, among other things, knowledge of the second $B_s$-cohomology with coefficients in a $p^s$-restricted weight; this was computed in \cite[Theorem Appendix C.2.6]{W}. For the reader's convenience, we list these cohomology groups here, with data extracted specifically for the underlying root system of $G$ of type $C_2$.
\begin{lemma} \label{h2bs}
    Assume the underlying root system of $G$ is of type $C_2$.
    Let $s$ be a positive integer, $p=2$ with ${\lambda'} \in X_s(T)$ and $w \in W$.  Then 
    \begin{equation*}\opH^{2}(B_{s}, {\lambda'})  \cong\left\{\begin{array}{ll}
	 \opH^2(B_1,w \cdot0 +2\nu)^{(s)}  & \text { if } {\lambda'}=2^{s-1}(w \cdot0 +2\nu), \ell(w)=0,2 \\
	 \nu^{(s)}  &\text { if }  {\lambda'}=2^s\nu+2^l w \cdot 0, \ell(w)=0,2   \text { and } 0 \leq l <s-1 \\
	 \nu^{(s)}  & \text { if } {\lambda'}=2^{s}\nu-2^l \alpha, \alpha \in \Pi, 0 \leq l \leq s-1; \\
	 & \text{ and } l \neq s-1 \text { if } \alpha=\alpha_2 \\
	 \nu^{(s)}  & \text { if } {\lambda'}=2^{s}\nu-2^t \beta-2^l \alpha, \alpha, \beta \in \Pi, 0 \leq l <t<s \\
	 \nu^{(s)}  & \text { if } {\lambda'}=2^{s}\nu-2^l( \alpha_1+\alpha_2), 0 \leq l <s-1 \\
	 M^{(s)} \otimes \nu^{(s)}  & \text { if } {\lambda'}=2^{s}\nu-2^{s-1} \alpha_2-2^l \alpha, \alpha \in \Pi, 0 \leq l <s-1 \\
	 M^{(s)} \otimes \nu^{(s)}  & \text { if } {\lambda'}=2^{s}\nu-2^{s-1}\alpha, \alpha \in \Pi \\
	 \opH^1(B_{s-1},M^{(-1)} \otimes \lambda_1)  & \text { if } {\lambda'}=2\lambda_1, \text { for some } \lambda_1 \in X(T), s>1 \\
	 \oplus \opH^2(B_{s-1}, \lambda_1) & \\
	 0 & \text { else. }
	 \end{array}\right.\end{equation*}
	Here $M$ denotes an indecomposable $B$-module with head $k_{\alpha_1}$ and socle $k$ (cf. \cite[Theorem Appendix C.2.5]{W}). Note that it is implicit in the statement of the lemma that $s \geq 1$ or $s \geq 2$, depending on the case. 
\end{lemma}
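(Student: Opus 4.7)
The natural plan is induction on $s$, with the $s = 1$ case handled directly by the known computation of $\opH^{2}(B_{1}, \lambda')$ recorded in \cite[Theorem Appendix C.2.5]{W}. For the inductive step the key tool is the Lyndon--Hochschild--Serre spectral sequence attached to the normal inclusion $B_{1} \lhd B_{s}$, exploiting $B_{s}/B_{1} \cong B_{s-1}^{(1)}$:
\[
E_{2}^{i,j} \;=\; \opH^{i}\bigl(B_{s-1}, \opH^{j}(B_{1}, \lambda')^{(-1)}\bigr)^{(1)}
\;\Longrightarrow\; \opH^{i+j}(B_{s}, \lambda').
\]
The degree-$2$ abutment is filtered by $E_{\infty}^{2,0}$, $E_{\infty}^{1,1}$ and $E_{\infty}^{0,2}$, and each item of the case analysis in the statement will correspond to exactly one of these positions being non-zero.

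The first step of the bookkeeping is to read off $\opH^{j}(B_{1}, \lambda')$ for $j = 0, 1, 2$ from the $p = 2$, type $C_{2}$ tables: $\opH^{0}(B_{1}, -)$ detects $\lambda' \in 2 X(T)$; $\opH^{1}(B_{1}, -)$ is non-zero essentially only when $\lambda' = 2\nu - \alpha$ for a simple root $\alpha$, plus the characteristic-two anomaly $\lambda' = 2\nu - (\alpha_{1} + \alpha_{2})$; and the support of $\opH^{2}(B_{1}, -)$ is as in \cite[Appendix C.2.5]{W}. Feeding each non-zero $B_{1}$-group into the induction hypothesis for $B_{s-1}$ pigeonholes the possibilities: the weights $2^{s-1}(w \cdot 0 + 2\nu)$ produce the $\opH^{2}(B_{1}, -)^{(s)}$ term via $E_{\infty}^{0,2}$; one- and two-root-subtraction weights $2^{s}\nu - 2^{l}\alpha$ and $2^{s}\nu - 2^{t}\beta - 2^{l}\alpha$ arise from iterating the $\opH^{1}(B_{1}, -)$ contribution through $E_{\infty}^{1,1}$; the weights $2^{s}\nu + 2^{l} w \cdot 0$ and $2^{s}\nu - 2^{l}(\alpha_{1}+\alpha_{2})$ come from further iterations of the same pattern inside $B_{s-1}$; and $\lambda' = 2\lambda_{1}$ collapses most cleanly because $\opH^{0}(B_{1}, 2\lambda_{1})$ is non-zero, yielding $E_{\infty}^{i,0}$-contributions that assemble into $\opH^{2}(B_{s-1}, \lambda_{1})$ after untwisting.

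The main obstacle is identifying the non-trivial differentials and the extensions they force. The transgression $d_{2} \colon E_{2}^{0,1} \to E_{2}^{2,0}$ is responsible for the appearance of the indecomposable module $M$ (with head $k_{\alpha_{1}}$ and socle $k$) in the two exceptional families $\lambda' = 2^{s}\nu - 2^{s-1}\alpha$ and $\lambda' = 2^{s}\nu - 2^{s-1}\alpha_{2} - 2^{l}\alpha$: informally, a $k_{\alpha_{1}}$-summand arising from $\opH^{1}(B_{1}, -)$ fuses with a $k$-summand arising from $\opH^{2}(B_{1}, -)$ via a non-zero connecting map, giving $M$ rather than a split sum. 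Verifying that these differentials are non-zero in precisely the stated cases (and vanish otherwise) is where the real work lies; I would carry this out by comparing the spectral sequence against the restricted Lie algebra computation \emph{\`a la} \cite{Jan91}, using the structure of $\mathfrak{u}/[\mathfrak{u},\mathfrak{u}]$ under the special isogeny to pin down exactly which coboundary maps are non-trivial in characteristic $2$. The residual $\opH^{1}(B_{s-1}, M^{(-1)} \otimes \lambda_{1})$ summand in the $\lambda' = 2\lambda_{1}$ case then enters analogously, from the extension $0 \to k \to M \to k_{\alpha_{1}} \to 0$ coupling $\opH^{1}(B_{1}, -)$ and $\opH^{2}(B_{1}, -)$ under the inductive spectral sequence. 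Once these differentials are controlled, a final re-indexing by Frobenius twists reproduces the case-by-case formulas in the statement, and the residual "else" case is exactly the complement where all $E_{\infty}$ positions vanish.
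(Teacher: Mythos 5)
The paper does not actually prove this lemma: it is quoted (specialised to type $C_2$) from Wright \cite[Theorem Appendix C.2.6]{W}, so there is no in-paper argument to compare your attempt against. Your outline --- induction on $s$ using the LHS spectral sequence for $B_1 \lhd B_s$ with $E_2^{i,j}=\opH^i(B_{s-1},\opH^j(B_1,\lambda')^{(-1)})^{(1)}$, seeded by the known $\opH^j(B_1,-)$ for $j=0,1,2$ --- is the standard strategy and is essentially what underlies Wright's appendix, so the overall plan is sound.

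However, as written the proposal has a genuine gap and one conceptual error. The error: the indecomposable module $M$ cannot be produced by a non-zero transgression $d_2\colon E_2^{0,1}\to E_2^{2,0}$. Differentials in a spectral sequence only cut down the $E_\infty$-page; they never fuse two summands into a non-split extension. In these characteristic-$2$ computations $M$ enters for one of two legitimate reasons: either $\opH^1(B_1,\mu)$ is already the indecomposable $M^{(1)}\otimes(\cdot)$ for the relevant $\mu$ (this is how it is imported wholesale into the $E^{0,1}$ and $E^{1,1}$ positions, and into the $\opH^1(B_{s-1},M^{(-1)}\otimes\lambda_1)$ summand of the $\lambda'=2\lambda_1$ case), or it arises as a non-split extension between adjacent layers of the filtration of the abutment by $E_\infty^{2,0}$, $E_\infty^{1,1}$, $E_\infty^{0,2}$ --- a filtration-extension problem, not a differential. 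The gap: the verification that the differentials vanish (or not) in each case, the resolution of those filtration extensions, and the elimination of every weight not on the list is precisely the content of the lemma, and you explicitly defer it ("this is where the real work lies"). So the proposal is a credible plan but does not establish the statement; for the purposes of this paper the honest route is simply to cite \cite{W}, as the author does.
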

If $r=1$, we refer the reader to Theorem \ref{btau-c2}.
\begin{theorem} \label{BrC2} Suppose $r=2s+1>1$ and let $\lambda \in X_{r/2}(T)$. Then, for $0 \leq i \leq s-2$, we have
	\begin{equation*}\opH^{1}\left(B_{r/2}, \lambda \right) \cong\left\{\begin{array}{ll}
	k_{\omega_{1}}^{(r/2)}  & \text { if } \lambda=(2^{s}-1) \omega_2=\tau^r \omega_1-\beta_2 \\
	k_{\omega_{2}}^{(r/2)}  & \text { if } \lambda=(2^{s+1}-2) \omega_1 + \omega_2=\tau^r \omega_2-\beta_1 \\
	k_{\omega_{1}}^{(r/2)}  & \text { if } \lambda=2^{s} \omega_1= \tau^r \omega_1-\tau^{2s-1} \alpha_1\\
	M^{(r/2)}_{C_2} & \text { if }  \lambda=0=\tau^r(\omega_2-\omega_1)-\tau^{2s-1}\alpha_2\\
	k_{\omega_{1}}^{(r/2)}  & \text { if } \lambda=(2^{s}-2^{i+1})\omega_2 +2^{i+1}\omega_1=\tau^r \omega_1-\tau^{2i+1} \alpha_1 \\
	k_{\omega_2}^{(r/2)} & \text { if } \lambda=2^{i+1}\omega_2 +(2^{s+1}-2^{i+2})\omega_1=\tau^r \omega_2-\tau^{2i+1} \alpha_2  \\
	0 & \text { else. }
	\end{array}\right.\end{equation*}
	Here $M_{C_2}$ denotes the 2-dimensional indecomposable $B$-module with head $k_{\omega_{1}}$ and socle $ k_{\omega_{2}-\omega_{1}}$ (cf. \cite[2.2]{BNP04b}).
\end{theorem}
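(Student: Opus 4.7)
The plan is to apply the Lyndon–Hochschild–Serre spectral sequence for the normal chain $B_{\tau} \lhd B_{r/2}$, noting that $B_{r/2}/B_{\tau}$ is isomorphic, via the map induced by $\tau$, to the classical Frobenius kernel $B_s$ (this being the $C_2$-incarnation of the general fact that $G_{r/2}/G_{1/2}\cong G_{(r-1)/2}$ recalled in Section \ref{prelsuzree}). Writing the spectral sequence as
\begin{displaymath}
E_2^{i,j} = \opH^i(B_s,\opH^j(B_{\tau},\lambda))^{(\tau)} \Rightarrow \opH^{i+j}(B_{r/2},\lambda),
\end{displaymath}
I would extract the five-term exact sequence
\begin{displaymath}
0 \to \opH^1(B_s,\opH^0(B_{\tau},\lambda))^{(\tau)} \to \opH^1(B_{r/2},\lambda) \to \opH^0(B_s,\opH^1(B_{\tau},\lambda))^{(\tau)} \xrightarrow{d_2} \opH^2(B_s,\opH^0(B_{\tau},\lambda))^{(\tau)}.
\end{displaymath}

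Write $\lambda = \lambda_0 + \tau\lambda_1$ with $\lambda_0 \in X_{\tau}$ and $\lambda_1 \in X_s$. By the projection formula and Theorem \ref{btau-c2}, both $\opH^0(B_{\tau},\lambda)$ and $\opH^1(B_{\tau},\lambda)$ vanish unless $\lambda_0 = 0$; hence the case $\lambda_0 = \omega_1$ immediately yields $\opH^1(B_{r/2},\lambda) = 0$, and I may assume $\lambda = \tau\lambda_1$. In this case Theorem \ref{btau-c2} gives $\opH^0(B_{\tau},\lambda) = k_{\tau\lambda_1}$ and $\opH^1(B_{\tau},\lambda) = k_{\tau(\lambda_1+\omega_2-\omega_1)} \oplus k_{\tau(\lambda_1+\omega_1)}$.

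The term $E_2^{0,1}$ picks up exactly those summands $k_{\tau\mu}$ with $\mu \in 2^s X(T)$. A direct check in the $\{\omega_1,\omega_2\}$-basis, using $\lambda_1 \in X_s$, shows this happens in exactly two sub-cases: $\lambda_1 = (2^s-1)\omega_1$, which (via the $\omega_1$-summand) gives $\lambda = (2^s-1)\omega_2 = \tau^r\omega_1 - \beta_2$; and $\lambda_1 = \omega_1 + (2^s-1)\omega_2$, which (via the $\omega_2-\omega_1$-summand) gives $\lambda = (2^{s+1}-2)\omega_1 + \omega_2 = \tau^r\omega_2 - \beta_1$. These account for the first two rows of the theorem. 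For these $\lambda_1$ Lemma \ref{h2bs} shows $E_2^{2,0} = \opH^2(B_s,\lambda_1)^{(\tau)} = 0$ (or has no summand of the correct weight), so $d_2 = 0$ and the surviving $E_2^{0,1}$ contributes the stated $k_{\omega_1}^{(r/2)}$ or $k_{\omega_2}^{(r/2)}$.

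The remaining contributions come from $E_2^{1,0} = \opH^1(B_s,\lambda_1)^{(\tau)}$. Here I would invoke the analogue of Lemma \ref{h2bs} for $\opH^1(B_s,-)$ (as in \cite[Theorem Appendix C.2.5]{W}): in type $C_2$ with $p=2$ the $\lambda_1$'s giving a nonzero $\opH^1$ are (up to a $2^s$-translate) of the form $-2^i\alpha$ for $\alpha$ simple and $0 \le i \le s-1$, together with the trivial weight which produces the indecomposable $B$-module $M$ with head $k_{\alpha_1}$ and socle $k$. Pulling each such $\lambda_1$ back through $\tau$ yields the four remaining families in the theorem: $i=s-1$, $\alpha=\alpha_1$ gives $\lambda = 2^s\omega_1$; $\lambda_1 = 0$ gives $\lambda = 0$ and, after twisting $M$ by $\tau$, the module $M_{C_2}$ with head $k_{\omega_1}$ and socle $k_{\omega_2-\omega_1}$; and the two families for $0 \le i \le s-2$ come from the $-2^{i+1}\alpha$ entries for $\alpha\in\Pi$. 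In each case I would also verify from Lemma \ref{h2bs} that the target $\opH^2(B_s,\lambda_1)$ either vanishes or carries no weight matching the relevant summand of $E_2^{0,1}$, so that the spectral sequence collapses at $E_2$ on the $1$-diagonal.

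The main obstacle is the weight-bookkeeping in the last paragraph: matching the weights that appear in Wright's classification of $\opH^1(B_s,\lambda_1)$ with the explicit pull-backs through $\tau$ stated in the theorem, and in particular correctly identifying $M_{C_2}$ as a non-split extension. The latter requires checking that the class in $E_2^{1,0}$ when $\lambda = 0$ really is the non-split extension of $k_{\omega_1}^{(r/2)}$ by $k_{\omega_2-\omega_1}^{(r/2)}$ (equivalently, that the extension class produced by the Wright module $M$ does not degenerate under twisting by $\tau$), which I would verify by comparing with the explicit cocycle description of $\opH^1(B_s,k)$ in \cite[Theorem Appendix C.2.5]{W}.
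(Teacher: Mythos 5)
Your proposal matches the paper's proof in all essentials: the same LHS spectral sequence for $B_{\tau}\lhd B_{r/2}$, the same decomposition $\lambda=\lambda_0+\tau\lambda_1$ forcing $\lambda_0=0$, the same identification of the two $E^{0,1}$-cases and the four $E^{1,0}$-families (the paper quotes $\opH^1(B_s,\lambda')$ from \cite[Theorem 2.7]{BNP04b} rather than from Wright, but the content is identical), and the same use of Lemma \ref{h2bs} to kill $d_2$ (the paper makes your "no summand of the correct weight" check explicit via the observation that the two relevant $\lambda'$ have odd $\omega_1$-coefficient and so lie outside the root lattice, whereas every nonvanishing case of Lemma \ref{h2bs} requires $\lambda'$ in the root lattice). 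The only point worth tightening is the one you flagged: the module appearing at $\lambda=0$ is $M_{C_2}^{(r/2)}$ with head $k_{\omega_1}^{(r/2)}$ and socle $k_{\omega_2-\omega_1}^{(r/2)}$, and non-splitness is automatic since twisting by the isogeny $\tau^r$ is a fully faithful functor on $B$-modules.
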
 

We underline that the last two non-zero instances only occur when $s \geq 2$ (or $r \geq 5$).

\begin{proof}
The second equality in each case identifying two forms of $\lambda$ is straightforward, recalling $\tau(\omega_1)=\omega_2$. Hence we now prove that $\lambda$ must be equal to one of the weights given by the first equality in each case. We consider the LHS spectral sequence
	\begin{displaymath}
	E_{2}^{i,j} = \opH^i(B_{r/2}/B_{\tau}, \opH^j(B_{\tau}, \lambda)) \Rightarrow \opH^{i+j}(B_{r/2}, \lambda)
	\end{displaymath}
	and the corresponding five-term exact sequence
	\begin{displaymath}
	0 \to E^{1,0} \to E^1 \to E^{0,1} \to E^{2,0} \to E^2.
	\end{displaymath}
	We will identify $E^1$ with either $E^{0,1}$ and $E^{1,0}$ and compute all of the non-zero cases in this way. 
	First, we fix some notation.  
	Since $\lambda \in X_{r/2}(T)$, it may be uniquely expressed as $\lambda=  \sum_{i=0}^{r-1} \tau^{i} \lambda_{i}$, where $\lambda_i$ are $\tau$-restricted weights. Then, we write $\lambda=\lambda_{0}+ \tau(\lambda')$, for $\lambda'= \sum_{j=1}^{r-1} \tau^{j-1} \lambda_{j}.$ Suppose $E^{0,1}\neq 0$ and consider the $E^{0,1}$-term.\\
	We have
	\begin{equation*}
	\begin{aligned}
	E^{0,1} & = \Hom_{B_{r/2}/B_{\tau}}(k, \opH^1(B_{\tau}, \lambda) )  \\
	        & \cong \Hom_{B_{r/2}/B_{\tau}}(k, \opH^1(B_{\tau}, \lambda_{0}) \otimes \tau(\lambda')).
	\end{aligned}
	\end{equation*}
There is only one $\tau$-restricted weight for which $\opH^1(B_{\tau}, \lambda_0) \neq 0$, namely $\lambda_0=0$. In this case
\begin{displaymath}
\opH^{1}(B_{\tau}, k) \cong k_{\omega_2-\omega_1}^{(\tau)} \oplus k_{\omega_1}^{(\tau)}.
\end{displaymath}
Hence
\begin{equation*}
	\begin{aligned}
	E^{0,1} & = \Hom_{B_{r/2}/B_{\tau}}( k, (k_{\omega_1}^{(\tau)} \oplus k_{\omega_2-\omega_1}^{(\tau)}) \otimes \tau(\lambda')) \\
	& \cong \Hom_{B_{(r-1)/2}}( k, (k_{\omega_1}^{(\tau)} \oplus k_{\omega_2-\omega_1}^{(\tau)}) \otimes k_{\lambda'}^{(\tau)})  \\
	& \cong \Hom_{B_{(r-1)/2}}( k, k_{\omega_1+\lambda'}^{(\tau)} \oplus k_{\omega_2-\omega_1+\lambda'}^{(\tau)}).
	\end{aligned}
	\end{equation*}
	
	Now $\Hom_{B_{(r-1)/2}}( k, k_{\omega_1+\lambda'}^{(\tau)} \oplus k_{\omega_2-\omega_1+\lambda'}^{(\tau)})$ is non-zero if at least one of $\omega_1+\lambda'$ or $\omega_2-\omega_1+\lambda' \in \tau^{r-1}X(T)$. In fact, $\Hom_{B_{(r-1)/2}}( k, k_{\omega_1+\lambda'}^{(\tau)} \oplus k_{\omega_2-\omega_1+\lambda'}^{(\tau)}) $ is at most one-dimensional: since $\omega_2-2\omega_1\not\in\tau^{r-1}X(T)$, at most one of $\omega_1+\lambda'$ and $\omega_2-\omega_1+\lambda'$ is in $\tau^{r-1}X(T)$. We take each case in turn.
	
	First, suppose $\omega_1+\lambda'\in \tau^{r-1} X(T)$. As $p=2$, we have $\lambda' = (a2^s-1)\omega_1+b2^s\omega_2\in X_s$. It immediately follows $b=0$ and $a=1$, giving $\lambda=\lambda_0+\tau(\lambda')=(2^s-1) \omega_2$ and 
	\[E^{0,1}=\Hom_{B_{r/2}/B_{\tau}}(k, k_{\tau(\omega_1+\lambda')}\oplus k_{\tau(\omega_2-\omega_1+\lambda')}).\]
	The first term in the target of the $\Hom$ is $k_{\omega_2+(2^s-1) \omega_2}=k_{2^s\omega_2}$. Thus $E^{0,1}\cong k_{2^s\omega_2}=(k_{\omega_1})^{(r/2)}$.
	
	In the case $\omega_2-\omega_{1}+\lambda' \in \tau^{r-1} X(T)$, a similar argument leads us to conclude that $E^{0,1}=k_{\omega_2}^{(r/2)}$ and $\lambda=\omega_2 + (2^{s+1}-2) \omega_1$. 
To conclude, for $\lambda \in X_{r/2}(T)$,
\begin{equation*}E^{0,1} \cong \left\{\begin{array}{ll}
	 k_{\omega_{1}}^{(r/2)}  & \text { if } \lambda =(2^{s}-1) \omega_2  \\
	 k_{\omega_{2}}^{(r/2)}  & \text { if } \lambda =(2^{s+1}-2)\omega_{1} + \omega_2\\
	 0 & \text { else. }
	 \end{array}\right.\end{equation*}
	
	 Now suppose $E^{1,0}\neq 0$. We have 
	 \begin{align*}E^{1,0}&=\opH^{1}(B_{r/2} / B_{\tau}, \Hom_{B_{\tau}}(k, \lambda)), \\
	 &=\opH^{1}(B_{r/2} / B_{\tau}, \Hom_{B_{\tau}}(k, \lambda_0)\otimes \tau(\lambda'))
	 \end{align*}
	 so $\lambda_0=0$ and $\lambda=\tau(\lambda')$. Thus $E^{1,0}\cong \opH^{1}( B_{s},  {\lambda'} ^{(\tau)}) \cong  \opH^{1}( B_{s},  {\lambda'})^{(\tau)}$
	 for $\lambda=\tau {\lambda'}$. Notice that since $r-1=2s>0$, $B_{(r-1)/2}=B_{s}$ is a classical Frobenius kernel and $\opH^{1}( B_{s},  {\lambda'})$ is the $B_s$-cohomology for ${\lambda'} \in X_{s}(T)$ computed in \cite[Theorem 2.7]{BNP04b}. We have
	 \begin{equation*}\opH^{1}(B_{s}, {\lambda'} ) \cong\left\{\begin{array}{ll}
	 k_{\omega_{1}}^{(s)}  & \text { if } {\lambda'}=2^{s} \omega_1 - 2^{s-1}\alpha_1 \\
	 M^{(s)}_{C_2} & \text { if }  {\lambda'}=0=2^s(\omega_2-\omega_1)-2^{s-1}\alpha_2 \\
	 k_{\omega_{j}}^{(s)}  & \text { if } {\lambda'}=2^{s}\omega_{\alpha} -2^{i}\alpha, \alpha \in \Pi, 0 \leq i \leq s-2\\
	 0 & \text { else. }
	 \end{array}\right.\end{equation*}
	 with $M_{C_2}$ having the structure as claimed in the statement of the theorem. We note the implicit constraints on $s$ in the different cases. Hence,
	 \begin{equation*}E^{1,0} \cong \opH^{1}\left(B_{s}, {\lambda'} \right)^{(\tau)} \cong \left\{\begin{array}{ll}
	 k_{\omega_{1}}^{(r/2)}  & \text { if } {\lambda'}=2^{s} \omega_1 - 2^{s-1}\alpha_1 \\
	 M^{(r/2)}_{C_2} & \text { if }  {\lambda'}= 0=2^s(\omega_2-\omega_1)-2^{s-1}\alpha_2 \\
	 k_{\omega_{j}}^{(r/2)}  & \text { if } {\lambda'}=2^{s}\omega_{\alpha} -2^{i}\alpha, \alpha \in \Pi, 0 \leq i \leq s-2\\
	 0 & \text { else. }
	 \end{array}\right.\end{equation*}
	 One can recover $\lambda$ from ${\lambda'}$, recalling $\alpha_1=2\omega_{1}-\omega_{2}$ and $\alpha_2=2\omega_{2}-2\omega_{1}$.	 For example if ${\lambda'} =2^{s} \omega_1 - 2^{s-1}\alpha_1=2^{s-1} \omega_2$, then $\lambda=\tau {\lambda'}=2^s \omega_1.$ The other cases are similar.
	 	  
In light of the above, observe that there is no choice of $\lambda$ for which $E^{1,0}$ and $E^{0,1}$ are both non-zero. Hence if $E^{1,0} \neq 0$, then $E^{0,1}=0$, implying that $E^{1} \cong E^{1,0}$. Alternatively, suppose that $E^{0,1} \neq 0$, so $E^{1,0}=0$. It remains to check whether the differential $d_2: E^{0,1} \to E^{2,0}$ is the zero map. Assume $E^{2,0} \neq 0$ and we have 
\begin{equation*}
	 \begin{aligned}
	 E^{2,0} &=\opH^{2}(B_{r/2} / B_{\tau}, \Hom_{B_{\tau}}(k, \lambda)) \\
	 &\cong \opH^{2}(B_{(r-1)/2}, \Hom_{B_{\tau}}(k, \lambda)^{(-\tau)})^{(\tau)} \\
	 & \cong \opH^{2}( B_{s},  {\lambda'}^{(\tau)}) \cong  \opH^{2}( B_{s},  {\lambda'})^{(\tau)}
	 \end{aligned}
	 \end{equation*}
	 for $\lambda=\tau {\lambda'}$. As before, $B_{(r-1)/2}=B_{s}$ is a classical Frobenius kernel and $\opH^{2}( B_{s}, {\lambda'})$ is the second $B_s$-cohomology for ${\lambda'} \in X_{s}(T)$ computed in Lemma \ref{h2bs}. 

Since $E^{0,1} \neq 0$, ${\lambda'}=2^s \omega_1-\omega_1$ or $2^s \omega_2+\omega_1-\omega_2$. In each case the coefficient of $\omega_1$ in ${\lambda'}$ is odd, so ${\lambda'}$ is not in the root lattice; however, since $E^{2,0} \neq 0$, we see from Lemma \ref{h2bs} that $\lambda'$ is in the root lattice---a contradiction. It follows that the differential $d_2: E^{0,1} \to E^{2,0}$ is the zero map. Therefore, if $E^{0,1} \neq 0$, we have $E^{1} \cong E^{0,1}$.
\end{proof}	

For a general $\lambda \in X(T)$, not necessarily lying in $X_{r/2}$, we proceed as in \cite[2.8]{BNP04b}.
\begin{cor} \label{C2weightform}
    Let $\lambda \in X(T)$ and $r=2s+1>1$. Then $\opH^1(B_{r/2}, \lambda) \neq 0$ if and only if $\lambda= \tau^r \omega - \tau^i \alpha$, for some weight $\omega \in X(T)$, and $\alpha \in \Pi$ with $0 \leq i \leq 2s-1$ or $\lambda= \tau^r \omega - \beta$, for some weight $\omega \in X(T)$, and $\beta \in \Pi_s$.
\end{cor}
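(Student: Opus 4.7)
The plan is to reduce to the $r/2$-restricted case already handled in Theorem~\ref{BrC2} and then translate the six explicit forms into the statement of the corollary. First I would establish that any $\lambda \in X(T)$ admits a unique decomposition $\lambda = \lambda_0 + \tau^r \mu$ with $\lambda_0 \in X_{r/2}(T)$ and $\mu \in X(T)$: using $\tau^2 = F$ together with $\tau(\omega_1) = \omega_2$ and $\tau(\omega_2) = 2\omega_1$, one computes $\tau^r X(T) = \langle 2^{s+1}\omega_1,\, 2^s \omega_2 \rangle$, so the quotient $X(T)/\tau^r X(T)$ is represented by $\{a\omega_1 + b\omega_2 : 0 \leq a < 2^{s+1},\, 0 \leq b < 2^s\}$, which is precisely $X_{r/2}(T)$ as described in Section~\ref{prelsuzree}.

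Since $B_{r/2} = \ker \tau^r$ acts trivially on any $\tau^r$-twisted weight, the projection formula gives
\[
\opH^1(B_{r/2}, k_\lambda) \cong \opH^1(B_{r/2}, k_{\lambda_0}) \otimes k_\mu^{(r/2)},
\]
so the left-hand side is non-zero if and only if $\opH^1(B_{r/2}, k_{\lambda_0})$ is. By Theorem~\ref{BrC2} this happens precisely when $\lambda_0$ is one of the six listed forms; in each case, absorbing the leading $\tau^r(\cdot)$ summand of $\lambda_0$ into $\tau^r \mu$ rewrites $\lambda$ as either $\tau^r \omega - \beta$ with $\beta \in \Pi_s$ (from cases (a), (b)) or $\tau^r \omega - \tau^i \alpha$ with $\alpha \in \Pi$ and $i$ odd in $\{1, 3, \ldots, 2s-1\}$ (from cases (c)--(f)). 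This gives one direction.

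For the converse, given $\lambda = \tau^r \omega - \tau^i \alpha$ or $\lambda = \tau^r \omega - \beta$, I would reduce the offset $-\tau^i \alpha$ or $-\beta$ modulo $\tau^r X(T)$ into $X_{r/2}$ and verify that the result is one of the six forms. The main obstacle, and indeed the only subtlety, is that the parameterisation in the corollary is over-complete: using $\tau(\alpha_1) = \alpha_2$ and $\tau(\alpha_2) = 2\alpha_1$, the set $\{\tau^i \alpha : 0 \leq i \leq 2s-1,\, \alpha \in \Pi\}$ consists only of the offsets $\{2^j \alpha_1 : 0 \leq j \leq s\} \cup \{2^j \alpha_2 : 0 \leq j \leq s-1\}$, and moreover $\alpha_1 = \beta_1 \in \Pi_s$. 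Thus even-$i$ offsets are relabellings of odd-$i$ offsets for the other simple root, and the offset $\alpha_1$ coincides with $\beta_1$; once this combinatorial identification is carried out, each of the six cases of the theorem corresponds to exactly one of the desired forms and the corollary follows.
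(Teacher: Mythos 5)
Your proposal is correct and follows essentially the same route as the paper: decompose $\lambda=\lambda_0+\tau^r\lambda_1$ with $\lambda_0\in X_{r/2}(T)$, use the tensor identity to reduce non-vanishing to that of $\opH^1(B_{r/2},\lambda_0)$, and invoke Theorem~\ref{BrC2} in both directions. Your explicit accounting of the over-complete parameterisation (even exponents $\tau^{2j}\alpha$ being relabellings of odd-exponent offsets, and $\alpha_1=\beta_1$) is a detail the paper leaves implicit, but the substance is identical.
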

\begin{proof}
Suppose $\opH^1(B_{r/2}, \lambda) \neq 0$. Then we may uniquely write $\lambda=\lambda_0+\tau^r \lambda_1$, for $\lambda_0 \in X_{r/2}(T)$ and $\lambda_1 \in X(T)$. It follows that $\opH^1(B_{r/2},\lambda) \cong \opH^1(B_{r/2},\lambda_0) \otimes \tau^r \lambda_1$. Thus, by Theorem \ref{BrC2}, $\opH^1(B_{r/2},\lambda_0) \neq 0$ if and only if $\lambda_0=\tau^r \omega'-\tau^i \alpha$ for $\alpha \in \Pi$ and $0 \leq i \leq 2s-1$, or $\lambda_0=\tau^r \omega'-\beta$ for $\beta \in \Pi_s$, with $\omega'$ the specific weight in the theorem. In the first case, we may then write $\lambda=\lambda_0+\tau^r \lambda_1=\tau^r \omega'-\tau^i \alpha + \tau^r \lambda_1=\tau^r(\omega'+\lambda_1)-\tau^i \alpha=\tau^r\omega-\tau^i \alpha$. Secondly, we have $\lambda=\lambda_0+\tau^r \lambda_1=\tau^r \omega'-\beta + \tau^r \lambda_1=\tau^r(\omega'+\lambda_1)-\beta=\tau^r\omega-\beta$. In both cases, we obtain the required form.

Conversely, suppose we are given any weight $\lambda=\tau^r\omega-\tau^i \alpha$, with $\alpha \in \Pi$ and $0 \leq i \leq 2s-1$ or $\lambda= \tau^r \omega - \beta$, for $\beta \in \Pi_s$. In either case, one can always express $\omega$ as $\omega=\omega'+\lambda_1$, for the required weight $\omega'$ in Theorem \ref{BrC2} and some weight $\lambda_1 \in X(T).$ Hence, $\opH^1(B_{r/2},\lambda) \neq 0$ for all such $\lambda$, as non-vanishing is independent of the choice of $\lambda_1.$ 
\end{proof}

Suppose $\opH^1(B_{r/2},\lambda)\neq 0$ and let $(\zeta, j)$ denote the appropriate pair, $(\alpha, i)$ or $(\beta, 1)$, as defined in the previous corollary. 
Now, given $\lambda=\tau^r \omega-\tau^j \zeta$, we may write $\lambda=\tau^r \omega' - \tau^j \zeta + \tau^r \lambda_1$, 
where $\omega'$ is chosen as per the list in Theorem \ref{BrC2}, and so it follows that $\lambda_1$ is $\omega-\omega'$. Hence
\begin{equation*}
\begin{aligned}
    \opH^1(B_{r/2}, \lambda) & \cong \opH^1(B_{r/2}, \lambda_0) \otimes k_{\lambda_1}^{(r/2)} \\
    & \cong \opH^1(B_{r/2}, \tau^r \omega' - \tau^j \zeta) \otimes k_{\omega-\omega'}^{(r/2)}.
\end{aligned}
\end{equation*}
Direct verification, substituting the answers from Theorem \ref{BrC2}, yields the following result
\begin{theorem} \label{Br2C2gen}
Let $\lambda \in X(T).$ Then, for $ 0 \leq i \leq s-2$, we have
	\begin{equation*}\opH^{1}\left(B_{r/2}, \lambda \right) \cong\left\{\begin{array}{ll}
	k_{\omega}^{(r/2)}  & \text { if } \lambda =\tau^r \omega - \beta_i, \omega \in X(T), \beta_i \in \Pi_s \\
	k_{\omega}^{(r/2)}  & \text { if } \lambda =\tau^r \omega - \tau^{2s-1}\alpha_1, \omega \in X(T) \\
	M^{(r/2)}_{C_2} \otimes k_{\omega+\omega_1-\omega_2}^{(r/2)} & \text { if }   \lambda =\tau^r \omega - \tau^{2s-1}\alpha_2, \omega \in X(T)\\
	k_{\omega}^{(r/2)}  & \text { if } \lambda =\tau^r \omega - \tau^{2i+1}\alpha_j, \omega \in X(T), \alpha_j \in \Pi \\
	0 & \text { else. }
	\end{array}\right.\end{equation*}
\end{theorem}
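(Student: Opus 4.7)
The plan is to deduce Theorem \ref{Br2C2gen} from Theorem \ref{BrC2} and Corollary \ref{C2weightform} via the standard tensor-product identity, essentially in the manner sketched in the paragraph preceding the statement, making it a largely bookkeeping exercise.

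First, I would write any $\lambda \in X(T)$ uniquely as $\lambda = \lambda_0 + \tau^r \lambda_1$ with $\lambda_0 \in X_{r/2}(T)$ and $\lambda_1 \in X(T)$. Because $B_{r/2}$ acts trivially on any $\tau^r$-twisted one-dimensional module, one gets
$$\opH^{1}(B_{r/2}, \lambda) \cong \opH^{1}(B_{r/2}, \lambda_0) \otimes k_{\lambda_1}^{(r/2)},$$
reducing the computation to the already-solved restricted case.

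Next, I would apply Corollary \ref{C2weightform} to see that $\opH^{1}(B_{r/2}, \lambda) \neq 0$ precisely when $\lambda = \tau^r \omega - \tau^j \zeta$, with $\zeta \in \Pi$ and $0 \le j \le 2s-1$, or $\zeta \in \Pi_s$ and $j = 0$. For each such pair $(\zeta,j)$, Theorem \ref{BrC2} supplies a distinguished $\omega' \in X(T)$ such that $\lambda_0 = \tau^r \omega' - \tau^j \zeta$ lies in $X_{r/2}(T)$ and contributes non-zero cohomology. Subtracting yields $\lambda_1 = \omega - \omega'$, so that
$$\opH^{1}(B_{r/2}, \lambda) \cong \opH^{1}(B_{r/2}, \tau^r \omega' - \tau^j \zeta) \otimes k_{\omega - \omega'}^{(r/2)}.$$

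Finally, I would substitute the values from Theorem \ref{BrC2} case by case. In the six one-dimensional cases one has $\opH^{1}(B_{r/2}, \lambda_0) = k_{\omega'}^{(r/2)}$, so the tensor product collapses to $k_{\omega}^{(r/2)}$, matching the corresponding lines of Theorem \ref{Br2C2gen}. In the exceptional case $(\zeta,j) = (\alpha_2, 2s-1)$, the relevant $\omega'$ is $\omega_2 - \omega_1$ and $\opH^{1}(B_{r/2}, \lambda_0) = M_{C_2}^{(r/2)}$; hence the answer is $M_{C_2}^{(r/2)} \otimes k_{\omega + \omega_1 - \omega_2}^{(r/2)}$, precisely as stated.

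The main (and fairly minor) obstacle is the bookkeeping of the correct $\omega'$ across the seven non-vanishing cases of Theorem \ref{BrC2}, combined with keeping careful track of the $\alpha_2$-case, where the character shift is absorbed into the tensor with the two-dimensional indecomposable $M_{C_2}^{(r/2)}$ rather than cleanly cancelling against $\omega'$ as in the one-dimensional cases.
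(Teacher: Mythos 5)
Your proposal is correct and follows essentially the same route as the paper: decompose $\lambda=\lambda_0+\tau^r\lambda_1$ with $\lambda_0\in X_{r/2}(T)$, use the tensor identity $\opH^1(B_{r/2},\lambda)\cong\opH^1(B_{r/2},\lambda_0)\otimes k_{\lambda_1}^{(r/2)}$ together with Corollary \ref{C2weightform} to pin down the non-vanishing weights, and then substitute the values of Theorem \ref{BrC2}, with the $\alpha_2$-case correctly producing $M_{C_2}^{(r/2)}\otimes k_{\omega+\omega_1-\omega_2}^{(r/2)}$ via $\omega'=\omega_2-\omega_1$.
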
 

\subsubsection*{$G_{r/2}$-cohomology of induced modules}

By Kempf's vanishing theorem, $\opH^{0}(\lambda)=\Ind_{B}^{G} \lambda$ is zero unless $\lambda$ is dominant. For $\lambda \in X(T)_{+}$, one may use Theorem \ref{btau-c2} and Theorem \ref{BrC2}, respectively, to compute $\opH^{1}(G_{r/2}, H^{0}(\lambda))$ with the aid of the isomorphism (\ref{indBG}).
Finally, we note that, by \cite[3.1. Theorem (C)]{BNP04b}, $\Ind_B^G(M_{C_{2}})= \opH^0(\omega_1)$.

In case $r=1$, we obtain:
\begin{theorem} \label{gtau-c2}
Let $\lambda \in X_{\tau}(T)$. Then
\begin{equation*}\opH^{1}\left(G_{\tau}, \opH^0(\lambda) \right)^{(-\tau)}  \cong\left\{\begin{array}{ll}
 \opH^0(\omega_1) \cong L(\omega_1) & \text { if } \lambda= 0 \\
0 & \text { else. }
\end{array}\right.\end{equation*}
\end{theorem}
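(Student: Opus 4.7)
The plan is to apply the isomorphism (\ref{indBG}) directly in the case $r=1$, which yields
\[
\opH^1(G_{\tau}, \opH^0(\lambda))^{(-\tau)} \;\cong\; \Ind_B^G\bigl(\opH^1(B_{\tau}, \lambda)^{(-\tau)}\bigr),
\]
so the computation reduces entirely to inducing the $B_{\tau}$-cohomology already computed in Theorem \ref{btau-c2}. Since $X_{\tau}(T) = \{0, \omega_1\}$ there are only two cases to check.

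For $\lambda = \omega_1$, Theorem \ref{btau-c2} gives $\opH^1(B_{\tau}, \omega_1) = 0$, so the right-hand side is $\Ind_B^G(0) = 0$ as required.

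For $\lambda = 0$, Theorem \ref{btau-c2} gives $\opH^1(B_{\tau}, k)^{(-\tau)} \cong k_{\omega_2 - \omega_1} \oplus k_{\omega_1}$. Applying $\Ind_B^G$ and using that induction commutes with direct sums, I obtain
\[
\Ind_B^G k_{\omega_1} \,\oplus\, \Ind_B^G k_{\omega_2 - \omega_1} \;=\; \opH^0(\omega_1) \,\oplus\, \opH^0(\omega_2 - \omega_1).
\]
Expressing $\omega_2 - \omega_1 = -\omega_1 + \omega_2$ in the basis of fundamental weights shows that this weight is not dominant, hence $\opH^0(\omega_2 - \omega_1) = 0$. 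The surviving summand $\opH^0(\omega_1)$ is the $4$-dimensional natural module in characteristic $2$ for $C_2$, which is well known to be irreducible, i.e.\ $\opH^0(\omega_1) \cong L(\omega_1)$.

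There is no substantial obstacle here; the statement is essentially a direct translation of Theorem \ref{btau-c2} through the spectral-sequence five-term exact sequence that produced (\ref{indBG}). The only points requiring a moment's thought are the non-dominance of $\omega_2 - \omega_1$ (so that Kempf's vanishing forces the corresponding induced module to be zero) and the identification $\opH^0(\omega_1) \cong L(\omega_1)$, both of which are standard in this setting.
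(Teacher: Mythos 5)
Your argument is correct and is exactly the route the paper takes: the theorem is stated there without a separate proof, being derived from the isomorphism (\ref{indBG}) together with Theorem \ref{btau-c2} and Kempf's vanishing theorem, just as you do. The only points you make explicit --- the non-dominance of $\omega_2-\omega_1$, which kills that induced summand, and the identification $\opH^0(\omega_1)\cong L(\omega_1)$ --- are precisely the ones the paper leaves implicit.
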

Otherwise, we have:
\begin{theorem} \label{c2gr2}
	Let $r=2s+1>1$, $\lambda \in X_{r/2}(T)$ and $0 \leq i \leq s-2$. Then
	\begin{equation*}\opH^{1}(G_{r/2}, \opH^{0}(\lambda))^{(-r/2)} \cong\left\{\begin{array}{ll}
	\opH^{0}(\omega_1)  & \text { if } \lambda=(2^{s}-1) \omega_2=\tau^r \omega_1-\beta_2 \\
	\opH^{0}(\omega_2)  & \text { if } \lambda=(2^{s+1}-2) \omega_1 + \omega_2=\tau^r \omega_2-\beta_1 \\
    \opH^{0}(\omega_1)  & \text { if } \lambda=2^{s} \omega_1=\tau^r \omega_1-\tau^{2s-1}\alpha_1  \\
	\opH^{0}(\omega_1)  & \text { if } \lambda= 0=\tau^r( \omega_2-\omega_1)-\tau^{2s-1}\alpha_2 \\
	\opH^{0}(\omega_1)  & \text { if } \lambda=(2^{s}-2^{i+1}) \omega_2 +2^{i+1}\omega_1=\tau^r \omega_1-\tau^{2i+1}\alpha_1 \\
	\opH^{0}(\omega_2)  & \text { if }  \lambda=2^{i+1}\omega_2 +(2^{s+1}-2^{i+2})\omega_1=\tau^r \omega_2-\tau^{2i+1}\alpha_2  \\
	0 & \text { else. }
	\end{array}\right.\end{equation*} 
\end{theorem}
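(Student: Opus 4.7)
The plan is to apply the isomorphism (\ref{indBG}) case by case, using the classification of non-vanishing $B_{r/2}$-cohomology already obtained in Theorem \ref{BrC2}. Since $\lambda \in X_{r/2}(T)$ is assumed dominant, we first verify this for each of the six listed weights; in each case the formula $\lambda = \tau^r\omega - \tau^j\zeta$ (with $\omega$ either $\omega_1$ or $\omega_2$) together with $p = 2$ makes the dominance a direct check on the coefficients of $\omega_1,\omega_2$.

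Having fixed such a dominant $\lambda$, substitute the answer from Theorem \ref{BrC2} into (\ref{indBG}). The first, second, third, fifth, and sixth rows of the theorem to be proved all reduce to the computation of $\Ind_B^G k_{\omega_i} = \opH^0(\omega_i)$ for $i = 1$ or $i = 2$, which is immediate from the definition of the induced module. The only case requiring more care is $\lambda = 0 = \tau^r(\omega_2 - \omega_1) - \tau^{2s-1}\alpha_2$, where Theorem \ref{BrC2} gives $\opH^1(B_{r/2}, 0) \cong M_{C_2}^{(r/2)}$, so that
\begin{equation*}
\opH^1(G_{r/2}, \opH^0(0))^{(-r/2)} \cong \Ind_B^G\bigl(\opH^1(B_{r/2}, 0)^{(-r/2)}\bigr) \cong \Ind_B^G(M_{C_2}).
\end{equation*}
For this one I would invoke the fact $\Ind_B^G(M_{C_2}) \cong \opH^0(\omega_1)$ recalled from \cite[3.1. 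Theorem (C)]{BNP04b} just before the statement; this is the key input that converts the non-split $B$-module $M_{C_2}$ (with socle $k_{\omega_2 - \omega_1}$, head $k_{\omega_1}$) into the induced module for the dominant weight $\omega_1$.

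The main obstacle, and really the only non-routine step, is this last identification $\Ind_B^G(M_{C_2}) \cong \opH^0(\omega_1)$: one must know that applying $\Ind_B^G$ does not simply split $M_{C_2}$ into $k_{\omega_1} \oplus k_{\omega_2 - \omega_1}$ and annihilate the non-dominant summand, but rather produces the full induced module $\opH^0(\omega_1)$ via the long exact sequence for $\Ind_B^G$ applied to $0 \to k_{\omega_2 - \omega_1} \to M_{C_2} \to k_{\omega_1} \to 0$, combined with $\Ind_B^G k_{\omega_2 - \omega_1} = 0 = R^1\Ind_B^G k_{\omega_2 - \omega_1}$ for the non-dominant weight $\omega_2 - \omega_1$. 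Once this point is granted, the theorem follows immediately line by line from Theorem \ref{BrC2}.
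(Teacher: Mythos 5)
Your proposal is correct and takes essentially the same route as the paper, which obtains Theorem \ref{c2gr2} by feeding the output of Theorem \ref{BrC2} into the isomorphism (\ref{indBG}) and invoking $\Ind_B^G(M_{C_2})\cong\opH^0(\omega_1)$ from \cite[3.1, Theorem (C)]{BNP04b} for the case $\lambda=0$. Your sketch of why induction does not split $M_{C_2}$ --- the vanishing of $\Ind_B^G$ and $R^1\Ind_B^G$ on $k_{\omega_2-\omega_1}$, which holds because $\langle\omega_2-\omega_1+\rho,\alpha_1^{\vee}\rangle=0$ puts that weight on a wall for the dot action --- is precisely the content of the cited fact.
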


Next, one can make use of Theorem \ref{Br2C2gen} to compute $\opH^1(G_{r/2}, \opH^0(\lambda))$ in terms of induced modules for all dominant weights $\lambda$, by applying the induction functor $\Ind_B^G$. The only non-obvious case is dealt with in the remark below. 

\begin{remark}
    Let $\tau^r \omega - \tau^{2s-1} \alpha_2 \in X(T)_+$.
    Then $\langle \omega, \alpha_1^{\vee} \rangle \geq -1$ and
    $\langle \omega, \alpha_2^{\vee} \rangle \geq 1$. In this case, by \cite[Proposition 3.4 (B)(c)]{BNP04b},  $\Ind_{B}^G(M_{C_2} \otimes k_{\omega+\omega_1-\omega_2})$ has a filtration with factors satisfying the following short exact sequence
    \begin{displaymath}
    0 \to \opH^0(\omega) \to \Ind_{B}^G(M_{C_2} \otimes k_{\omega+\omega_1-\omega_2}) \to \opH^0(\omega+2\omega_1-\omega_2) \to 0.
    \end{displaymath}
    (Observe that $\opH^0(\omega+2\omega_1-\omega_2)$ is always present, but $\opH^0(\omega)$ appears as a factor if $\langle \omega, \alpha_1^{\vee} \rangle \geq 0$.)
    \end{remark}
\subsubsection*{$G_{r/2}$-cohomology with coefficients in simple modules}  In this subsection, we make use of the  $G_1$-cohomology with coefficients in simple modules, computed in \cite[Proposition 2.3]{Sin94b}, to calculate $\opH^1(G_{s}, L(\lambda))$, for a positive integer $s$ and $\lambda \in X_{s}(T)$.

First, we underline that in this case, the calculation of $\opH^1(G_{s}, L(\lambda))$ requires knowledge of the following cohomology group.
\begin{lemma} \label{h2g1}
    Let $G$ be of type $C_2$ and $p=2$. Then $\opH^2(G_1,L(\omega_1))=0$.
\end{lemma}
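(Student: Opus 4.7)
The plan is to apply the Grothendieck-type spectral sequence
$$E_2^{i,j} = R^i \Ind_B^G \opH^j(B_1, \omega_1)^{(-1)} \Rightarrow \opH^{i+j}(G_1, \opH^0(\omega_1))^{(-1)}$$
of \cite[Remark 2.2.1, (2.2.3)]{BNP+12}, specialised to the classical case $r=2$, and to verify that every $E_2^{i,j}$ on the antidiagonal $i+j=2$ vanishes.

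First I would replace $L(\omega_1)$ by $\opH^0(\omega_1)$. The weight $\omega_1$ is minuscule for the root system of type $C_2$: its Weyl orbit $\{\pm\epsilon_1, \pm\epsilon_2\}$ exhausts the weights of the natural $4$-dimensional $Sp_4$-module, so $L(\omega_1)=V(\omega_1)=\opH^0(\omega_1)$. This reduces the claim to $\opH^2(G_1,\opH^0(\omega_1))=0$, which after untwisting is exactly what the spectral sequence abuts to.

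Next I would eliminate the three contributing terms. The $(2,0)$-entry needs $\opH^0(B_1,\omega_1)$, which vanishes since $\omega_1\notin 2X(T)$. The $(1,1)$-entry needs $\opH^1(B_1,\omega_1)$; the $B_s$-cohomology list recalled in the proof of \cref{BrC2} at $s=1$ allows only $\lambda'=\omega_2$ or $\lambda'=0$, and $\omega_1$ is neither. The $(0,2)$-entry needs $\opH^2(B_1,\omega_1)$, which I would kill by going through the case list of \cref{h2bs} at $s=1$, $\lambda'=\omega_1$. Expanding in the fundamental weight basis (using $\alpha_1=2\omega_1-\omega_2$, $\alpha_2=-2\omega_1+2\omega_2$, and the length-two dot-action images $s_1s_2\cdot 0=-4\omega_1+\omega_2$, $s_2s_1\cdot 0=2\omega_1-3\omega_2$), a parity check on the $\omega_1$- and $\omega_2$-coefficients of each surviving candidate right-hand side shows that none can equal $\omega_1=1\cdot\omega_1+0\cdot\omega_2$. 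Hence $\opH^2(B_1,\omega_1)=0$.

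Combining the three vanishings, the spectral sequence forces $\opH^2(G_1,\opH^0(\omega_1))^{(-1)}=0$, and untwisting yields the claim. The only mild obstacle is the finite parity bookkeeping in \cref{h2bs}, but it is routine; the substantive input is already packaged in the $B_1$-cohomology results we are reusing.
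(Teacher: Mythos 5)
Your argument is correct, but it takes a genuinely different route from the paper. The paper runs the LHS spectral sequence for $G_{\tau}\lhd G_1$: the row $j=1$ dies because $\opH^1(G_{\tau},L(\omega_1))=0$ (Theorem \ref{gtau-c2}), the term $E^{2,0}$ dies because $\Hom_{G_{\tau}}(k,L(\omega_1))=0$, and $E^{0,2}$ dies because $L(\omega_1)$ restricts to the Steinberg module for $G_{\tau}$ and is therefore $G_{\tau}$-injective; the five-term sequence then gives the vanishing. You instead use the induction spectral sequence $R^i\Ind_B^G\opH^j(B_1,\omega_1)^{(-1)}\Rightarrow\opH^{i+j}(G_1,\opH^0(\omega_1))^{(-1)}$ (after the correct observation that $\omega_1$ is minuscule for $C_2$, so $L(\omega_1)=\opH^0(\omega_1)$), and you kill the entire antidiagonal $i+j=2$ on the $E_2$-page: $\opH^0(B_1,\omega_1)=0$ since $\omega_1\notin 2X(T)$, $\opH^1(B_1,\omega_1)=0$ by the $B_1$-cohomology list of \cite[Theorem 2.7]{BNP04b}, and $\opH^2(B_1,\omega_1)=0$ by the parity check against the case list of Lemma \ref{h2bs} at $s=1$ (I verified that every surviving case forces $\omega_1$ to lie in $2X(T)+\{0,\omega_2\}$ modulo the root/orbit data you list, which it does not). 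Your route is more uniform and avoids invoking the exotic kernel $G_{\tau}$ and the injectivity of $L(\omega_1)$ over it, at the cost of needing the full $\opH^2(B_1,-)$ classification; the paper's route is shorter precisely because it exploits the special isogeny structure that is already set up for the rest of the section.
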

\begin{proof}
We run the LHS spectral sequence corresponding to $G_{\tau} \lhd G_1$. The $E_2$-page is given by
\begin{displaymath}
E^{i,j}_2=\opH^i(G_{\tau}, \opH^j(G_{\tau}, L(\omega_1))^{(-\tau)})^{(\tau)}.
\end{displaymath}
By Theorem \ref{gtau-c2}, $\opH^1(G_{\tau}, L(\omega_1))=0$, so $E^{i,1}=0.$ It follows that we obtain the following five-term exact sequence
\begin{displaymath}
0 \to E^{2,0} \to E^{2} \to E^{0,2} \to E^{3,0}.
\end{displaymath}
First, note that the $E^{2,0}$-term vanishes, since $\Hom_{G_{\tau}}(k, L(\omega_1))=0$. Moreover, $L(\omega_1)$ is an injective module for $G_{\tau}$, so $\opH^2(G_{\tau},L(\omega_1))=0$. Therefore, $E^{0,2}$ also vanishes, so we conclude that $E^2=\opH^2(G_1,L(\omega_1))=0$.
\end{proof}
\begin{theorem} \label{c2gs-simple}
	Let $s$ be a positive integer, $\lambda \in X_s(T)$ and $1 \leq i \leq s-1$. Then
	\begin{equation*}\opH^{1}(G_s, L(\lambda))^{(-s)} \cong\left\{\begin{array}{ll}
	L(\omega_1)  & \text { if } \lambda=0 \\
	k  & \text { if } \lambda=\omega_2 \\
    k  & \text { if } \lambda=2^{i} \omega, \omega \in \{\omega_1, \omega_2\}  \\
	0 & \text { else. }
	\end{array}\right.\end{equation*} 
	Note that it is implicit in the statement of the theorem that $s \geq 1$ or $s \geq 2$, depending on the case. 
\end{theorem}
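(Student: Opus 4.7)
The plan is to proceed by induction on $s$. The base case $s=1$ is Sin's computation of $\opH^1(G_1, L(\lambda))^{(-1)}$ in \cite[Proposition 2.3]{Sin94b}; the third branch in the statement, $\lambda=2^i\omega$ with $1\leq i\leq s-1$, is vacuous when $s=1$, so the base case matches exactly. For the inductive step, I will run the Lyndon--Hochschild--Serre spectral sequence attached to $G_{s-1}\lhd G_s$, using $G_s/G_{s-1}\cong G_1^{(s-1)}$:
\begin{equation*}
E_2^{i,j}=\opH^i(G_1^{(s-1)},\opH^j(G_{s-1},L(\lambda)))\Rightarrow\opH^{i+j}(G_s,L(\lambda)).
\end{equation*}
Writing any $\lambda\in X_s(T)$ uniquely as $\lambda=\mu_0+2^{s-1}\mu_1$ with $\mu_0\in X_{s-1}(T)$ and $\mu_1\in X_1(T)$, Steinberg's tensor product theorem gives $L(\lambda)\cong L(\mu_0)\otimes L(\mu_1)^{(s-1)}$, and since $G_{s-1}$ acts trivially on the $(s{-}1)$-twist, $\opH^j(G_{s-1},L(\lambda))\cong\opH^j(G_{s-1},L(\mu_0))\otimes L(\mu_1)^{(s-1)}$.

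I will then read off $\opH^1(G_s,L(\lambda))$ from the five-term exact sequence $0\to E^{1,0}\to\opH^1(G_s,L(\lambda))\to E^{0,1}\to E^{2,0}$. Since $L(\mu_0)$ is simple as a $G_{s-1}$-module (because $\mu_0\in X_{s-1}$), $L(\mu_0)^{G_{s-1}}=0$ unless $\mu_0=0$; in that case $E^{1,0}\cong\opH^1(G_1,L(\mu_1))^{(s-1)}$, which the base case determines completely. For $E^{0,1}$, the inductive hypothesis identifies $\opH^1(G_{s-1},L(\mu_0))$ with $L(\omega_1)^{(s-1)}$ when $\mu_0=0$, with the trivial module $k$ when $\mu_0\in\{\omega_2\}\cup\{2^j\omega:1\leq j\leq s-2,\,\omega\in\{\omega_1,\omega_2\}\}$, and with $0$ otherwise. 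Pulling the $(s{-}1)$-twist outside the invariants reduces $E^{0,1}$ either to $\Hom_{G_1}(L(\mu_1),L(\omega_1))^{(s-1)}$ (using self-duality in type $C_2$) or to $(L(\mu_1)^{G_1})^{(s-1)}$; simplicity of $L(\mu_1)$ as a $G_1$-module pinpoints a nonzero answer only for $\mu_1=\omega_1$ or $\mu_1=0$ respectively. A direct enumeration of the resulting values of $\lambda$ then shows that $E^{1,0}$ and $E^{0,1}$ have disjoint supports, so no hidden extension arises in the five-term sequence.

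The remaining task, and the most delicate step, is to verify that the transgression $d_2\colon E^{0,1}\to E^{2,0}$ vanishes whenever $E^{0,1}\neq 0$. In every such case with $\mu_0\neq 0$ one already has $L(\mu_0)^{G_{s-1}}=0$ and hence $E^{2,0}=0$ automatically, so the only nontrivial input is needed for $\mu_0=0,\,\mu_1=\omega_1$, where $E^{2,0}\cong\opH^2(G_1,L(\omega_1))^{(s-1)}$; this is precisely the vanishing provided by Lemma \ref{h2g1}. Once this is in hand, assembling the $E^{1,0}$ and $E^{0,1}$ contributions case by case and untwisting by $(-s)$ yields the claimed list and closes the induction. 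The hard part of the argument is thus packaged into Lemma \ref{h2g1}, with the rest of the work being a careful bookkeeping of twists and of the Steinberg decomposition.
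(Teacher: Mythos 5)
Your proposal is correct and follows essentially the same route as the paper: induction on $s$ via the LHS spectral sequence for $G_{s-1}\lhd G_s$, the Steinberg decomposition $\lambda=\lambda_0+2^{s-1}\lambda_1$, the observation that $E^{1,0}$ and $E^{0,1}$ have disjoint supports, and the reduction of the only nontrivial transgression (the case $\lambda=2^{s-1}\omega_1$) to the vanishing of $\opH^2(G_1,L(\omega_1))$ supplied by Lemma \ref{h2g1}. The only differences are cosmetic (notation $\mu_0,\mu_1$ versus $\lambda_0,\lambda_1$, and an explicit appeal to self-duality in the $E^{0,1}$ computation that the paper leaves implicit).
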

\begin{proof}
We proceed inductively. When $s=1$, we refer the reader to \cite[Proposition 2.3]{Sin94b}.
We write $\lambda=\lambda_0+2^{s-1}\lambda_1$, for $\lambda_0 \in X_{s-1}$ and $\lambda_1 \in X_1.$ Suppose $s>1$ and consider the LHS spectral sequence corresponding to $G_{s-1} \lhd G_s$. The $E_2$-page is given by
\begin{displaymath}
E^{i,j}_2:=\opH^i(G_1, \opH^j(G_{s-1}, L(\lambda_0))^{(-s+1)} \otimes L(\lambda_1))^{(s-1)}.
\end{displaymath}
First, consider the $E^{1,0}$-term. We have
\begin{displaymath}
E^{1,0}_2=\opH^1(G_1, \Hom_{G_{s-1}}(k, L(\lambda_0))^{(-s+1)} \otimes L(\lambda_1))^{(s-1)}.
\end{displaymath}
Note that $E^{1,0} \neq 0$ if and only if $\lambda_0=0$, in which case we obtain 
\begin{equation*}E^{1,0}=\opH^{1}(G_1, L(\lambda_1))^{(s-1)} \cong\left\{\begin{array}{ll}
	L(\omega_1)^{(s)} & \text { if } \lambda_1=0 \\
	k  & \text { if } \lambda_1=\omega_2 \\
	0 & \text { else. }
	\end{array}\right.\end{equation*} 
(cf. \cite[Proposition 2.3]{Sin94b}). Therefore, recalling that $\lambda=\lambda_0+2^{s-1}\lambda_1$, we may conclude that for $\lambda \in X_s$,
\begin{equation*}E^{1,0}\cong\left\{\begin{array}{ll}
	L(\omega_1)^{(s)} & \text { if } \lambda=0 \\
	k  & \text { if } \lambda=2^{s-1}\omega_2 \\
	0 & \text { else. }
	\end{array}\right.\end{equation*}
Now consider the $E^{0,1}$-term. We have
\begin{displaymath}
E^{0,1}=\Hom_{G_1}( L(\lambda_1), \opH^1(G_{s-1}, L(\lambda_0))^{(-s+1)})^{(s-1)}.
\end{displaymath}
We take each non-zero instance of $ \opH^1(G_{s-1}, L(\lambda_0))$ in turn. By the induction hypothesis, if $\lambda_0=0$, then $E^{0,1}=\Hom_{G_1}( L(\lambda_1), L(\omega_1))^{(s-1)}$. Thus $E^{0,1} \neq 0$ if and only if $\lambda_1=\omega_1$; it follows that $E^{0,1} \cong k$ for $\lambda=2^{s-1}\omega_1$. The other cases follow similarly and we obtain, for $1 \leq i \leq s-2$
\begin{equation*}E^{0,1}\cong\left\{\begin{array}{ll}
	k & \text { if } \lambda=2^{s-1}\omega_1 \\
	k  & \text { if } \lambda=\omega_2 \\
	k  & \text { if } \lambda=2^{i}\omega, \omega \in \{\omega_1,\omega_2\} \\
	0 & \text { else. }
	\end{array}\right.\end{equation*}
Notice that there is no choice of $\lambda$ for which $E^{1,0}$ and $E^{0,1}$ are both non-zero. Hence if $E^{1,0} \neq 0$, then $E^{0,1}=0$, implying that $E^{1} \cong E^{1,0}$. Alternatively, suppose that $E^{0,1} \neq 0$, so $E^{1,0}=0$. It remains to check whether the differential $d_2: E^{0,1} \to E^{2,0}$ is the zero map. The $E^{2,0}$-term is
\begin{displaymath}
E^{2,0}=\opH^2(G_1, \Hom_{G_{s-1}}(k, L(\lambda_0))^{(-s+1)} \otimes L(\lambda_1))^{(s-1)}.
\end{displaymath}
We consider each choice of $\lambda$ for which $E^{0,1} \neq 0$. If $\lambda=\omega_2$ or $\lambda=2^{i}\omega$, we obtain $\Hom_{G_{s-1}}(k, L(\lambda_0))=0$, so $E^{2,0}=0$.

It remains to verify the case $\lambda=2^{s-1}\omega_1$. Then $E^{2,0}=\opH^2(G_1,L(\omega_1))^{(s-1)}$, which vanishes by Lemma \ref{h2g1}. It follows that $d_2: E^{0,1} \to E^{2,0}$ is the zero map and we reach our conclusion.
\end{proof}
Next, making use of the previous theorem concerning the cohomology for classical Frobenius kernels, we compute $\opH^{1}(G_{r/2}, L(\lambda))$ for $r$ an odd positive integer and $\lambda \in X_{r/2}$.

If $r=1$, we refer the reader to Theorem \ref{gtau-c2}. Otherwise we obtain
\begin{theorem} \label{c2gr2-simple}
	Suppose $r=2s+1>1$ and let $\lambda \in X_{r/2}(T)$ with $1 \leq i \leq s-1$. Then
    \begin{equation*}\opH^{1}(G_{r/2}, L(\lambda))^{(-r/2)} \cong\left\{\begin{array}{ll}
	L(\omega_1)  & \text { if } \lambda=0 \\
	k  & \text { if } \lambda=2^s \omega_1 \\
	k  & \text { if } \lambda=\omega_2 \\
    k  & \text { if } \lambda=2^{i} \omega, \omega \in \{\omega_1, \omega_2\}  \\
	0 & \text { else. }
	\end{array}\right.\end{equation*} 
\end{theorem}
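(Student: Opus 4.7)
The plan is to run the Lyndon--Hochschild--Serre spectral sequence for the normal subgroup $G_s \lhd G_{r/2}$, in direct analogy with the proof of Theorem \ref{c2gs-simple} (which used $G_\tau \lhd G_1$ inside $G_s$). Every $\lambda \in X_{r/2}(T)$ decomposes uniquely as $\lambda = \lambda_0 + \tau^{2s}\lambda_1$ with $\lambda_0 \in X_s(T)$ and $\lambda_1 \in X_\tau(T) = \{0,\omega_1\}$; by Steinberg's tensor product theorem $L(\lambda) \cong L(\lambda_0) \otimes L(\lambda_1)^{(s)}$, and since $G_s$ acts trivially on the twist the $E_2$-page reads
\[
E_2^{i,j} \;=\; \opH^i\bigl(G_\tau,\; \opH^j(G_s, L(\lambda_0))^{(-s)} \otimes L(\lambda_1)\bigr)^{(s)},
\]
converging to $\opH^{i+j}(G_{r/2}, L(\lambda))$. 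The five-term exact sequence reduces the problem to computing $E_2^{1,0}$, $E_2^{0,1}$, and checking that $d_2 \colon E_2^{0,1} \to E_2^{2,0}$ vanishes.

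The key auxiliary observation to feed in is that $(\tau-1)\rho = \omega_1$, so $L(\omega_1)$ is the Steinberg module $\mathrm{St}_\tau$ of $G_\tau$; hence $L(\omega_1)$ is $G_\tau$-injective, $\opH^i(G_\tau, L(\omega_1))=0$ for $i \geq 1$, and it contains no trivial $G_\tau$-submodule. The term $E_2^{1,0}$ vanishes unless $\lambda_0 = 0$, in which case it equals $\opH^1(G_\tau, L(\lambda_1))^{(s)}$; combining Theorem \ref{gtau-c2} with the injectivity just noted, this survives only at $\lambda_1 = 0$, yielding $E_2^{1,0} \cong L(\omega_1)^{(r/2)}$ precisely when $\lambda = 0$. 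The term $E_2^{0,1} \cong \Hom_{G_\tau}(L(\lambda_1)^{*}, \opH^1(G_s, L(\lambda_0))^{(-s)})^{(s)}$ is evaluated case-by-case against Theorem \ref{c2gs-simple}: when $\lambda_0 = 0$ the inner cohomology is $L(\omega_1) = \mathrm{St}_\tau$, which is $G_\tau$-irreducible, so Schur together with self-duality forces $\lambda_1 = \omega_1$ and gives $k$ at $\lambda = 2^s\omega_1$; when $\lambda_0 \in \{\omega_2\} \cup \{2^i\omega_j : 1 \leq i \leq s-1,\ j \in \{1,2\}\}$ the inner cohomology is $k$, forcing $\lambda_1 = 0$ and giving $E_2^{0,1} \cong k$.

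Finally, the transgression vanishes in every surviving case: $E_2^{2,0} = \opH^2(G_\tau, \Hom_{G_s}(k, L(\lambda_0))^{(-s)} \otimes L(\lambda_1))^{(s)}$ is automatically zero when $\lambda_0 \neq 0$ (as then $\Hom_{G_s}(k, L(\lambda_0)) = 0$), and in the remaining case $\lambda = 2^s\omega_1$ (with $\lambda_0 = 0$, $\lambda_1 = \omega_1$) it equals $\opH^2(G_\tau, L(\omega_1))^{(s)}$, again zero by injectivity of $\mathrm{St}_\tau$. A direct inspection shows that no $\lambda$ makes both $E_2^{1,0}$ and $E_2^{0,1}$ nonzero, so $\opH^1(G_{r/2}, L(\lambda))$ is whichever (at most one) of the two is nonzero; untwisting by $(-r/2)$ then yields the stated description. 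The only genuinely new ingredient beyond the proof of Theorem \ref{c2gs-simple} is the identification $L(\omega_1) = \mathrm{St}_\tau$ and the $G_\tau$-injectivity it provides---this is what simultaneously kills the stray $E_2^{0,1}$ at $\lambda = 0$ and the transgression at $\lambda = 2^s\omega_1$---so I expect the main source of care to be the twist bookkeeping rather than any mathematical subtlety.
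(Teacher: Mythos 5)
Your proposal is correct and follows essentially the same route as the paper: the LHS spectral sequence for $G_s \lhd G_{r/2}$ applied to $L(\lambda)\cong L(\lambda_0)\otimes L(\lambda_1)^{(s)}$ with $\lambda_0\in X_s$, $\lambda_1\in X_\tau$, feeding in Theorem \ref{gtau-c2} and Theorem \ref{c2gs-simple} for the edge terms and killing the transgression at $\lambda=2^s\omega_1$ via the $G_\tau$-injectivity of $L(\omega_1)$. Your explicit identification $L(\omega_1)=\mathrm{St}_\tau$ is a clean way to package what the paper obtains by citing Sin and the injectivity of the half-Steinberg module, but it is not a different argument.
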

\begin{proof}
For $\lambda \in X_{r/2}$, write $\lambda=\lambda_0+2^s\lambda_1$, for $\lambda_0 \in X_s$ and $\lambda_1 \in X_{\tau}$. Consider the LHS spectral sequence corresponding to $G_{s} \lhd G_{r/2}$. The $E_2$-page is given by
\begin{displaymath}
E^{i,j}_2:=\opH^i(G_{\tau}, \opH^j(G_{s}, L(\lambda_0))^{(-s)} \otimes L(\lambda_1))^{(s)}.
\end{displaymath}
First, consider the $E^{1,0}$-term. We have
\begin{displaymath}
E^{1,0}=\opH^1(G_{\tau}, \Hom_{G_{s}}(k, L(\lambda_0))^{(-s)} \otimes L(\lambda_1))^{(s)}.
\end{displaymath}
Note that $E^{1,0} \neq 0$ if and only if $\lambda_0=0$, in which case we obtain 
\begin{equation*}E^{1,0}=\opH^{1}(G_{\tau}, L(\lambda_1))^{(s)} \cong\left\{\begin{array}{ll}
	L(\omega_1)^{(r/2)} & \text { if } \lambda_1=0 \\
	0  & \text { if } \lambda_1=\omega_1.
	\end{array}\right.\end{equation*} 
(cf. Theorem \ref{gtau-c2} and \cite[Lemma 2.1]{Sin94b}). Next, consider the $E^{0,1}$-term:
\begin{displaymath}
E^{0,1}=\Hom_{G_{\tau}}(L(\lambda_1), \opH^1(G_{s}, L(\lambda_0))^{(-s)})^{(s)}.
\end{displaymath}
We take each non-zero instance of $\opH^1(G_{s}, L(\lambda_0))^{(-s)}$ from Theorem \ref{c2gs-simple} in turn. If $\lambda_0=\omega_2$, then 
$E^{0,1}=\Hom_{G_{\tau}}(L(\lambda_1), k)^{(s)}$. Thus $E^{0,1} \neq 0$ if and only if $\lambda_1=0$; we obtain $E^{0,1} \cong k$ for $\lambda=\omega_2$. The other cases are similar. Moreover, we can recover $\lambda$, recalling $\lambda=\lambda_0+2^s\lambda_1$. We get, for $1 \leq i \leq s-1$
\begin{equation*}E^{0,1}\cong\left\{\begin{array}{ll}
	k  & \text { if } \lambda=\omega_2 \\
	k  & \text { if } \lambda=2^{i}\omega, \omega \in \{\omega_1,\omega_2\} \\
	k & \text { if } \lambda=2^{s}\omega_1 \\
	0 & \text { else. }
	\end{array}\right.\end{equation*}
Observe that there is no choice of $\lambda$ for which $E^{1,0}$ and $E^{0,1}$ are both non-zero. Hence if $E^{1,0} \neq 0$, then $E^{0,1}=0$, implying that $E^{1} \cong E^{1,0}$. Alternatively, suppose that $E^{0,1} \neq 0$, so $E^{1,0}=0$. We must also investigate whether the differential $d_2: E^{0,1} \to E^{2,0}$ is the zero map. The $E^{2,0}$-term is
\begin{displaymath}
E^{2,0}=\opH^2(G_{\tau}, \Hom_{G_{s}}(k, L(\lambda_0))^{(-s)} \otimes L(\lambda_1))^{(s)}.
\end{displaymath}
We consider each choice of $\lambda$ for which $E^{0,1} \neq 0$ in turn. If $\lambda=\omega_2$ or $\lambda=2^{i}\omega$, for $\omega \in \{\omega_1,\omega_2\}$, it follows that $\Hom_{G_{s}}(k, L(\lambda_0))=0$, which forces $E^{2,0}=0$.

Lastly, suppose $\lambda=2^{s}\omega_1$. In this case, $E^{2,0}=\opH^2(G_{\tau},L(\omega_1))^{(s)}=0$, since $L(\omega_1)$ is injective for $G_{\tau}$. We conclude that $d_2$ is the zero map. Therefore, $E^{0,1} \neq 0$ implies $E^1 \cong E^{0,1}$.
\end{proof}
\subsection{$G_2$ in characteristic 3} \label{subsecG2}

Let $G$ be simply connected of type $G_2$ over $k$ of characteristic 3.
Following \cite[Planche IX]{Bou82}, let $\Phi=\{ \pm (\epsilon_{1}-\epsilon_{2}), \pm(\epsilon_{1} - \epsilon_{3}), \pm(\epsilon_{2} - \epsilon_{3}),  \pm(2\epsilon_{1} - \epsilon_{2} - \epsilon_{3}), \pm(2\epsilon_{2} - \epsilon_{1} - \epsilon_{3}), \pm(2\epsilon_{3} - \epsilon_{1} - \epsilon_{2})   \}$ be the roots of a system of type $G_2$. Writing  $\epsilon_{1}=(1,0,0)$, $\epsilon_{2}=(0,1,0)$ and $\epsilon_{3}=(0,0,1)$, we may take a base of simple roots to be $\Pi:=\{\alpha_1,\alpha_2\}$, with
$\alpha_1=(1,-1,0)$ short, and  $\alpha_2=(-2,1,1)$ long; moreover, the corresponding fundamental dominant weights are $\omega_1=(0,-1,1)$ and $\omega_2=(-1,-1,2)$. We may check that a set of simple roots of $\Phi_s$ is $\Pi_s:=\{\alpha_{1}, \alpha_1+\alpha_2 \}$. We shall denote these simple roots by $\beta_1=\alpha_1=(1,-1,0)$, $\beta_2=\alpha_1+\alpha_2=(-1,0,1)$. The special isogeny induces a $\mathbb{Z}$-linear map $\tau^{\ast}:X(T) \to X(T)$, under which $\omega_1 \mapsto \omega_2 \mapsto 3\omega_1.$ From this point onwards, we abuse notation, writing $\tau$ instead of $\tau^{\ast}$. Thus, the $\tau$-restricted weights are $0$, $\omega_1$ and $2 \omega_1$.

\subsubsection*{$B_{\tau}$-cohomology}

Let $\lambda \in X_{r/2}$ be expressed as $\lambda = \lambda_{0} + \tau(\lambda_{1})$, for $\lambda_0 \in X_{\tau}(T)$ and $\lambda_1 \in X_s(T)$, such that $\opH^1(B_{\tau}, \lambda) \cong \opH^1(B_{\tau}, \lambda_{0}) \otimes  \tau(\lambda_{1})$. Thus, it suffices to compute $\opH^1(B_{\tau}, \lambda_{0})$, for $\lambda_{0} \in X_{\tau}(T)$. 

\begin{theorem} \label{btau-g2}
Let $\lambda_0 \in X_{\tau}(T)$. Then
\begin{equation*}\opH^{1}\left(B_{\tau}, \lambda_{0} \right) \cong\left\{\begin{array}{ll}
k_{\omega_2-\omega_1}^{(\tau)} \oplus k_{\omega_1}^{(\tau)} & \text { if } \lambda_{0}= \omega_1 \\
0 & \text { else. }
\end{array}\right.\end{equation*}
\end{theorem}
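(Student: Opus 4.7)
The proof will follow the same template as Theorem \ref{btau-c2}, since the $G_2$ setting presents no new structural features beyond a different root system. The plan is to reduce the computation to checking, for each of the three $\tau$-restricted weights $\lambda_0 \in \{0, \omega_1, 2\omega_1\}$, whether the weights $\beta_i + \lambda_0$ lie in $\tau X(T)$.

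First, I would invoke \cref{Ttau}, which gives the isomorphism
\[
\opH^1(B_{\tau}, \lambda_0) \cong \left[ k_{\beta_1 + \lambda_0} \oplus k_{\beta_2 + \lambda_0} \right]^{T_{\tau}},
\]
where $\beta_1 = \alpha_1 = 2\omega_1 - \omega_2$ and $\beta_2 = \alpha_1 + \alpha_2 = -\omega_1 + \omega_2$, using the Cartan relations $\alpha_1 = 2\omega_1 - \omega_2$, $\alpha_2 = -3\omega_1 + 2\omega_2$. A summand $k_{\mu}$ on the right gives a nonzero contribution precisely when $\mu \in \tau X(T)$. Since $\tau$ acts on $X(T)$ by $\omega_1 \mapsto \omega_2$ and $\omega_2 \mapsto 3\omega_1$, the image $\tau X(T) = \mathbb{Z}\omega_2 \oplus 3\mathbb{Z}\omega_1$ consists exactly of those weights $a\omega_1 + b\omega_2$ with $a \equiv 0 \pmod 3$.

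With this criterion in hand, I would then dispatch each $\tau$-restricted weight:
\begin{itemize}
\item For $\lambda_0 = 0$: $\beta_1 = 2\omega_1 - \omega_2$ and $\beta_2 = -\omega_1 + \omega_2$ both have $\omega_1$-coefficient not divisible by $3$, so both summands vanish.
\item For $\lambda_0 = \omega_1$: $\beta_1 + \omega_1 = 3\omega_1 - \omega_2 = \tau(\omega_2 - \omega_1)$ and $\beta_2 + \omega_1 = \omega_2 = \tau(\omega_1)$, yielding the claimed direct sum $k_{\omega_2-\omega_1}^{(\tau)} \oplus k_{\omega_1}^{(\tau)}$.
\item For $\lambda_0 = 2\omega_1$: $\beta_1 + 2\omega_1 = 4\omega_1 - \omega_2$ and $\beta_2 + 2\omega_1 = \omega_1 + \omega_2$ both have $\omega_1$-coefficient not divisible by $3$, so again both summands vanish.
\end{itemize}

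There is really no significant obstacle here: the calculation is a direct arithmetic check once the two lemmas are applied. The only minor care is in correctly expressing the short simple roots $\beta_1, \beta_2$ in terms of the fundamental weights using the $G_2$ Cartan matrix, and in identifying the sublattice $\tau X(T)$. The rest is a case analysis over just three weights, paralleling the $C_2$ case almost verbatim.
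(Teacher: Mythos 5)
Your proposal is correct and follows essentially the same route as the paper: reduce via \cref{directsum} and \cref{Ttau} to checking whether $\beta_i+\lambda_0\in\tau X(T)$ for each of the three $\tau$-restricted weights, with the same expressions $\beta_1=2\omega_1-\omega_2$, $\beta_2=\omega_2-\omega_1$ and the same conclusions in each case. Your explicit identification of $\tau X(T)$ as the weights whose $\omega_1$-coefficient is divisible by $3$ is a harmless streamlining of the paper's case-by-case verification.
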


\begin{proof}
Once again, Lemma \ref{directsum} tells us that, regarded as a $T$-module, $\opH^1(U_{\tau}, k) \otimes \lambda_{0}$ is the direct sum of certain $k_{\beta_i + \lambda_{0}}$, for $\beta_i \in \Pi_s$, as previously defined. Such a summand yields a non-zero contribution to  $\opH^1(B_{\tau}, \lambda_{0})$ if and only if $\beta_i + \lambda_{0} \in \tau X(T)$, by Lemma \ref{Ttau}. Hence, we need only check which of these weights belong to $\tau X(T)$. 

First, suppose $\lambda_{0}=0$. It is readily checked that we have no non-zero contribution. We conclude that  $\opH^1(B_{\tau}, 0)=0$.
	
Then, let $\lambda_{0}=\omega_1$ and we have
	\begin{displaymath}
	\beta_1+\omega_1=2\omega_1-\omega_2+\omega_1=3\omega_1 -\omega_2=\tau(\omega_2-\omega_1).
	\end{displaymath}
	\begin{displaymath}
	\beta_2+\omega_1=\omega_2-\omega_1 +\omega_1 = \omega_2 =\tau(\omega_1).
	\end{displaymath}
	 Then, $\opH^1(B_{\tau}, \omega_1) \cong \left[ \bigoplus_{i} k_{\beta_i + \omega_1} \right]^{T_{\tau}} \cong   \left[ k_{\tau(\omega_2-\omega_1)} \oplus k_{\tau\omega_1} \right]^{T_{\tau}} \cong k_{\omega_2-\omega_1}^{(\tau)} \oplus k_{\omega_1}^{(\tau)}$.  
	 
Lastly, suppose $\lambda_{0}=2\omega_1$. We obtain
	\begin{displaymath}
	\beta_1+2\omega_1=2\omega_1-\omega_2+2\omega_1=4\omega_1 -\omega_2 \notin \tau X(T).
	\end{displaymath}
	\begin{displaymath}
	\beta_2+2\omega_1=\omega_2-\omega_1 +2\omega_1 = \omega_1 + \omega_2 \notin \tau X(T).
	\end{displaymath}
	 Then, $\opH^1(B_{\tau}, 2\omega_1) \cong \left[ \bigoplus_{i} k_{\beta_i + \omega_1} \right]^{T_{\tau}} = 0$, since none of them lie in $\tau X(T).$ 
\end{proof}
\subsubsection*{$B_{r/2}$-cohomology} In this subsection, we extend the results of the previous section to calculate $\opH^1(B_{r/2}, \lambda)$, for $\lambda \in X_{r/2}(T)$.

First, when $r=1$, we direct the reader to Theorem \ref{btau-g2}. Otherwise, we obtain
\begin{theorem} \label{Br2G2}
	Suppose $r=2s+1>1$ and let $\lambda \in X_{r/2}(T)$. Then, for $0 \leq i \leq s-2$, we have
	\begin{equation*}\opH^{1}\left(B_{r/2}, \lambda \right) \cong\left\{\begin{array}{ll}
	k_{\omega_{1}}^{(r/2)}  & \text { if } \lambda=\omega_1 + (3^{s}-1) \omega_2=\tau^r \omega_1-\beta_2  \\
	 k_{\omega_{2}}^{(r/2)}  & \text { if } \lambda=(3^{s+1}-2)\omega_1+\omega_2=\tau^r \omega_2-\beta_1\\
	k_{\omega_{1}}^{(r/2)}  & \text { if } \lambda=3^{s} \omega_1 +3^{s-1} \omega_2=\tau^r \omega_1-\tau^{2s-1}\alpha_1 \\
	M^{(r/2)}_{G_2} & \text { if }  \lambda=3^{s} \omega_1 =\tau^r(\omega_2-\omega_1)-\tau^{2s-1}\alpha_2\\
	k_{\omega_{1}}^{(r/2)}  & \text { if } \lambda=(3^{s}-3^{i} \cdot 2)\omega_2 +3^{i+1}\omega_12=\tau^r \omega_1-\tau^{2i+1}\alpha_1\\
	k_{\omega_2}^{(r/2)} & \text { if } \lambda=3^{i+1}\omega_2 +(3^{s+1}-3^{i+1} \cdot 2)\omega_1=\tau^r \omega_2-\tau^{2i+1}\alpha_2  \\
	0 & \text { else. }
	\end{array}\right.\end{equation*}
	Here $M_{G_2}$ denotes the 2-dimensional indecomposable $B$-module with head $k_{\omega_{1}}$ and socle $k_{\omega_{2}-\omega_{1}}$ (cf. \cite[2.2]{BNP04b}). Moreover, the last two non-zero instances only occur for $s \geq 2$ (or $r \geq 5$).
\end{theorem}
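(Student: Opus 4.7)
The plan mirrors the $C_2$ argument carried out in the proof of Theorem \ref{BrC2}. First I would verify that the two expressions for $\lambda$ given in each of the six cases agree; this is a mechanical calculation using $\tau\omega_1 = \omega_2$, $\tau\omega_2 = 3\omega_1$, $\alpha_1 = 2\omega_1 - \omega_2$ and $\alpha_2 = -3\omega_1 + 2\omega_2$. The main engine is then the LHS spectral sequence
\[E_2^{i,j} = \opH^i(B_{r/2}/B_\tau, \opH^j(B_\tau, \lambda)) \Rightarrow \opH^{i+j}(B_{r/2}, \lambda)\]
together with its associated five-term exact sequence. Writing $\lambda = \lambda_0 + \tau\lambda'$ uniquely with $\lambda_0 \in X_\tau(T)$ and $\lambda' \in X_s(T)$, the strategy is to compute $E^{0,1}$ and $E^{1,0}$ separately, observe that they cannot both be nonzero, and check that the differential $d_2 : E^{0,1} \to E^{2,0}$ vanishes---after which $\opH^1(B_{r/2}, \lambda)$ is identified with whichever of the two is nonzero.

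For $E^{0,1}$, Theorem \ref{btau-g2} shows that $\opH^1(B_\tau, \lambda_0)$ is nonzero only for $\lambda_0 = \omega_1$, with $\opH^1(B_\tau, \omega_1) \cong k_{\omega_1}^{(\tau)} \oplus k_{\omega_2 - \omega_1}^{(\tau)}$. Taking $T_\tau$-invariants of $\opH^1(B_\tau, \omega_1) \otimes \tau\lambda'$ reduces to determining those $\lambda' = a\omega_1 + b\omega_2$ with $0 \leq a, b < 3^s$ for which $\omega_1 + \lambda' \in \tau^{r-1}X(T) = 3^s X(T)$ or $\omega_2 - \omega_1 + \lambda' \in 3^s X(T)$. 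These conditions are mutually exclusive (their difference is $-\alpha_1 \notin 3^s X(T)$) and each has a unique solution inside the fundamental box: $\lambda' = (3^s - 1)\omega_1$, giving $\lambda = \omega_1 + (3^s - 1)\omega_2$ and $E^{0,1} \cong k_{\omega_1}^{(r/2)}$; or $\lambda' = \omega_1 + (3^s - 1)\omega_2$, giving $\lambda = (3^{s+1} - 2)\omega_1 + \omega_2$ and $E^{0,1} \cong k_{\omega_2}^{(r/2)}$. These are the first two cases of the theorem.

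For $E^{1,0}$, nonvanishing of $\Hom_{B_\tau}(k, \lambda_0)$ forces $\lambda_0 = 0$, so $\lambda = \tau\lambda'$ and $E^{1,0} \cong \opH^1(B_s, \lambda')^{(\tau)}$; here $B_{(r-1)/2} = B_s$ is a classical Frobenius kernel. The remaining four cases of the theorem are then read off from the $G_2$, $p = 3$ instance of \cite[Theorem 2.7]{BNP04b}, whose nonvanishing values of $\lambda'$ are $3^s\omega_1 - 3^{s-1}\alpha_1$ (giving $k_{\omega_1}^{(s)}$), $3^s(\omega_2 - \omega_1) - 3^{s-1}\alpha_2$ (giving $M_{G_2}^{(s)}$), and the two parametric families $3^s\omega_\alpha - 3^i\alpha$ for $\alpha \in \Pi$ and $0 \leq i \leq s - 2$ (giving $k_{\omega_j}^{(s)}$). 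Applying $\tau$ to each $\lambda'$ and transporting the cohomology yields precisely the stated $\lambda$ and $\opH^1(B_{r/2}, \lambda)$. Disjointness of the $E^{0,1}$ and $E^{1,0}$ cases is immediate, as they require the incompatible values $\lambda_0 = \omega_1$ and $\lambda_0 = 0$.

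Finally, the differential $d_2 : E^{0,1} \to E^{2,0}$ vanishes in a more economical way than in the $C_2$ argument: whenever $E^{0,1} \neq 0$ we have $\lambda_0 = \omega_1$, and since $\omega_1 \notin \tau X(T) = \mathbb{Z}\omega_2 \oplus 3\mathbb{Z}\omega_1$, the module $\Hom_{B_\tau}(k, \lambda) = k_{\omega_1 + \tau\lambda'}^{T_\tau}$ is already zero, whence $E^{2,0} = 0$ and $d_2$ vanishes automatically---no root-lattice argument via a $G_2$ analogue of Lemma \ref{h2bs} is required. The main obstacle is not any single step but the systematic bookkeeping: translating between the raw weight $\lambda$, its $\lambda_0 + \tau\lambda'$ decomposition, the $\tau^r\omega - \tau^j\zeta$ presentation of the theorem, and the $\lambda'$ form in which the classical $B_s$-cohomology is tabulated, while correctly tracking the Frobenius twists through each identification.
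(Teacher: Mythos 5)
Your proposal is correct and follows essentially the same route as the paper's proof: the LHS spectral sequence for $B_{\tau}\lhd B_{r/2}$, identification of $E^{0,1}$ via Theorem \ref{btau-g2} and of $E^{1,0}$ via the classical $B_s$-cohomology of \cite[Theorem 2.7]{BNP04b}, disjointness of the two families via the value of $\lambda_0$, and vanishing of $d_2$ because $\lambda_0=\omega_1\notin\tau X(T)$ forces $\Hom_{B_\tau}(k,\lambda)=0$. Your observation that this last step is cheaper than in the $C_2$ case (no $\opH^2(B_s,-)$ computation needed) is precisely the shortcut the paper itself takes here.
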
 
\begin{proof}
    The second equality in each case identifying two forms of $\lambda$ is readily verifiable, recalling $\tau(\omega_1)=\omega_2$. Thus, we focus on proving that $\lambda$ must be equal to one of the weights given by the first equality in each case.
	We consider the LHS spectral sequence
	\begin{displaymath}
	E_{2}^{i,j} = \opH^i(B_{r/2}/B_{\tau}, \opH^j(B_{\tau}, \lambda)) \Rightarrow \opH^{i+j}(B_{r/2}, \lambda)
	\end{displaymath}
	and the corresponding five-term exact sequence
	\begin{displaymath}
	0 \to E^{1,0} \to E^1 \to E^{0,1} \to E^{2,0} \to E^2.
	\end{displaymath}
	As before, we will identify $E^1$ with either $E^{0,1}$ or $E^{1,0}$ and we calculate all of the non-zero cases in this way. We begin by fixing some notation. Since $\lambda \in X_{r/2}(T)$, it has a unique $\tau$-adic expansion and we write $\lambda=  \sum_{i=0}^{r-1} \tau^{i} \lambda_{i}$, with $\lambda_i$ $\tau$-restricted weights. Then, $\lambda=\lambda_{0}+ \tau(\lambda')$, for $\lambda'= \sum_{j=1}^{r-1} \tau^{j-1} \lambda_{j}.$ Suppose $E^{0,1}\neq 0$ and consider the $E^{0,1}$-term.
	We have
	\begin{equation*}
	\begin{aligned}
	E^{0,1} & = \Hom_{B_{r/2}/B_{\tau}}(k, \opH^1(B_{\tau}, \lambda)) \\
	        & \cong \Hom_{B_{r/2}/B_{\tau}}(k, \opH^1(B_{\tau}, \lambda_{0}) \otimes \tau(\lambda')).
	\end{aligned}
	\end{equation*}
There is only one $\tau$-restricted weight for which $\opH^1(B_{\tau}, \lambda_0) \neq 0$, namely $\lambda_0=\omega_1$. In this case, we obtain
\begin{displaymath}
\opH^{1}(B_{\tau}, \omega_1) \cong k_{\omega_2-\omega_1}^{(\tau)} \oplus k_{\omega_1}^{(\tau)}.
\end{displaymath}
Hence
\begin{equation*}
	\begin{aligned}
	E^{0,1} & = \Hom_{B_{r/2}/B_{\tau}}( k, (k_{\omega_1}^{(\tau)} \oplus k_{\omega_2-\omega_1}^{(\tau)}) \otimes \tau(\lambda')) \\
	& \cong \Hom_{B_{(r-1)/2}}( k, (k_{\omega_1}^{(\tau)} \oplus k_{\omega_2-\omega_1}^{(\tau)}) \otimes k_{\lambda'}^{(\tau)})  \\
	& \cong \Hom_{B_{(r-1)/2}}( k, k_{\omega_1+\lambda'}^{(\tau)} \oplus k_{\omega_2-\omega_1+\lambda'}^{(\tau)}).
	\end{aligned}
	\end{equation*}
	
	Similarly to the proof of Theorem \ref{BrC2}, $\Hom_{B_{(r-1)/2}}( k, k_{\omega_1+\lambda'}^{(\tau)} \oplus k_{\omega_2-\omega_1+\lambda'}^{(\tau)})$ is non-zero if at least one of $\omega_1+\lambda'$ and $\omega_2-\omega_1+\lambda'$ belongs to $\tau^{r-1}X(T)$. Moreover, $\Hom_{B_{(r-1)/2}}( k, k_{\omega_1+\lambda'}^{(\tau)} \oplus k_{\omega_2-\omega_1+\lambda'}^{(\tau)})$ is at most  one-dimensional: since $\omega_2-2\omega_1 \notin \tau^{r-1}X(T)$, at most one of  $\omega_1+\lambda'$ and $\omega_2-\omega_1+\lambda'$ lies in $\tau^{r-1}X(T)$.  Thus, we consider both cases in turn to determine the possible values of $\lambda$ and $E^{0,1}$.
First, suppose $\omega_2-\omega_{1}+\lambda' \in \tau^{r-1} X(T)$. Since $p=3$, we have $\lambda'=(a3^s+1)\omega_1+(b3^s-1)\omega_2 \in X_s(T)$. It immediately follows that we must have $a=0$, $b=1$, in which case $\lambda'=\omega_1+(3^s-1)\omega_2$, giving $\lambda=(3^{s+1}-2)\omega_1+\omega_2$ and
\[E^{0,1}=\Hom_{B_{r/2}/B_{\tau}}(k, k_{\tau(\omega_1+\lambda')}\oplus k_{\tau(\omega_2-\omega_1+\lambda')}).\]
	The second term in the target of the $\Hom$ is $k_{\tau(\omega_2-\omega_1+\omega_1+(3^s-1) \omega_2)}=k_{3^s\tau(\omega_2)}$. Thus $E^{0,1}\cong k_{3^s\tau(\omega_2)}=(k_{\omega_2})^{(r/2)}$.
	
	In the case $\omega_{1}+\lambda' \in \tau^{r-1} X(T)$, a similar argument leads us to conclude that $E^{0,1}=k_{\omega_1}^{(r/2)}$ for $\lambda=\omega_1 + (3^{s}-1) \omega_2$. 

To conclude, for $\lambda \in X_{r/2}(T)$,
\begin{equation*}E^{0,1} \cong \left\{\begin{array}{ll}
	 k_{\omega_{1}}^{(r/2)}  & \text { if } \lambda=\omega_1 + (3^{s}-1) \omega_2  \\
	 k_{\omega_{2}}^{(r/2)}  & \text { if } \lambda=(3^{s+1}-2)\omega_1+\omega_2\\
	 0 & \text { else. }
	 \end{array}\right.\end{equation*}
 Now suppose $E^{1,0}\neq 0$. We have 
	 \begin{align*}E^{1,0}&=\opH^{1}(B_{r/2} / B_{\tau}, \Hom_{B_{\tau}}(k, \lambda)), \\
	 &=\opH^{1}(B_{r/2} / B_{\tau}, \Hom_{B_{\tau}}(k, \lambda_0)\otimes \tau(\lambda'))
	 \end{align*}
	 so $\lambda_0=0$ and $\lambda=\tau(\lambda')$. Thus $E^{1,0}\cong \opH^{1}( B_{s},  {\lambda'} ^{(\tau)}) \cong  \opH^{1}( B_{s},  {\lambda'})^{(\tau)}$
	 for $\lambda=\tau {\lambda'}$. Notice that since $r-1=2s>0$, $B_{(r-1)/2}=B_{s}$ is a classical Frobenius kernel and $\opH^{1}( B_{s},  {\lambda'})$ is the $B_s$-cohomology for ${\lambda'} \in X_{s}(T)$ computed in \cite[Theorem 2.7]{BNP04b}. We have
	 \begin{equation*}\opH^{1}(B_{s}, {\lambda'} ) \cong\left\{\begin{array}{ll}
	 k_{\omega_{1}}^{(s)}  & \text { if } {\lambda'}=3^{s-1} (\omega_1 + \omega_2) \\
	 M^{(s)}_{G_2} & \text { if }  {\lambda'}=3^{s-1} \omega_2 \\
	 k_{\omega_{j}}^{(s)}  & \text { if } {\lambda'}=3^{s}\omega_{j} -3^{i}\alpha_{j}, j \in \left\lbrace 1,2 \right\rbrace, 0 \leq i \leq s-2\\
	 0 & \text { else. }
	 \end{array}\right.\end{equation*}
	 where $M_{G_2}$ has the structure as claimed in the statement of the theorem. We note the implicit constraints on $s$ in the different cases. Thus,
	 \begin{equation*}E^{1,0} \cong \opH^{1}(B_{s}, {\lambda'} )^{(\tau)} \cong \left\{\begin{array}{ll}
	 k_{\omega_{1}}^{(r/2)}  & \text { if } {\lambda'}=3^{s-1} (\omega_1 + \omega_2) \\
	 M^{(r/2)}_{G_2} & \text { if }  {\lambda'}=3^{s-1} \omega_2 \\
	 k_{\omega_{j}}^{(r/2)}  & \text { if } {\lambda'}=3^{s}\omega_{j} -3^{i}\alpha_{j}, j \in \left\lbrace 1,2 \right\rbrace, 0 \leq i \leq s-2\\
	 0 & \text { else. }
	 \end{array}\right.\end{equation*}
	 We can recover $\lambda$ from ${\lambda'}$, recalling $\alpha_1=2\omega_1-\omega_2$ and $\alpha_2=-3\omega_1+2\omega_2$. For instance, if ${\lambda'} =3^{s-1} (\omega_1 + \omega_2)$, then $\lambda=\tau {\lambda'}=3^s \omega_1 + 3^{s-1} \omega_2$. Note that the other cases follow similarly. 
	 
Finally, note that there is no choice of $\lambda$ for which $E^{0,1}$ and $E^{1,0}$ are simultaneously non-zero. Hence, if $E^{1,0} \neq 0$, then $E^{0,1}=0$ so $E^1 \cong E^{1,0}$. Alternatively, if $E^{0,1} \neq 0$, then $\lambda=\omega_1+(3^s-1)\omega_2$ or $\lambda=(3^{s+1}-2)\omega_1+\omega_2$, according to the earlier discussion. Note that in either case, $\lambda \notin \tau X(T)$, pushing $\Hom_{B_{\tau}}(k, \lambda)=0.$ Hence $E^{1,0}=E^{2,0}=0$, meaning that $E^1 \cong E^{0,1}$.
\end{proof}	
Now, for completeness, for a general $\lambda \in X(T)$, not necessarily lying in $X_{r/2}$, we proceed as in \cite[2.8]{BNP04b}. First, we make the following observation and we note that the proof is identical to the proof of Corollary \ref{C2weightform}.
\begin{cor} \label{G2weightform}
    Let $\lambda \in X(T)$. Then $\opH^1(B_{r/2}, \lambda) \neq 0$ if and only if $\lambda= \tau^r \omega - \tau^i \alpha$, for some weight $\omega \in X(T)$, and $\alpha \in \Pi$ with $0 \leq i \leq 2s-1$ or $\lambda= \tau^r \omega - \beta$, for some weight $\omega \in X(T)$, and $\beta \in \Pi_s$.
\end{cor}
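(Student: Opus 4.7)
The plan is to mimic verbatim the argument given for Corollary \ref{C2weightform}, since Theorem \ref{Br2G2} (the $G_2$ analogue of Theorem \ref{BrC2}) has exactly the same shape: every non-vanishing instance of $\opH^{1}(B_{r/2},\lambda_0)$ for $\lambda_0 \in X_{r/2}(T)$ occurs precisely when $\lambda_0 = \tau^r\omega' - \tau^i\alpha$ for some $\alpha \in \Pi$ and $0 \le i \le 2s-1$, or $\lambda_0 = \tau^r\omega' - \beta$ for some $\beta \in \Pi_s$, with $\omega'$ a specific weight depending on the case.

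First, for the forward direction, I would write $\lambda$ uniquely as $\lambda = \lambda_0 + \tau^r\lambda_1$ with $\lambda_0 \in X_{r/2}(T)$ and $\lambda_1 \in X(T)$, and use the standard tensor identity
\begin{equation*}
\opH^1(B_{r/2},\lambda) \;\cong\; \opH^1(B_{r/2},\lambda_0) \otimes k_{\tau^r \lambda_1},
\end{equation*}
which holds because $B_{r/2}$ acts trivially on $k_{\tau^r\lambda_1}$. Non-vanishing of the left-hand side is equivalent to non-vanishing of $\opH^1(B_{r/2},\lambda_0)$, so Theorem \ref{Br2G2} forces $\lambda_0$ into one of the two listed shapes. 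Substituting back and absorbing $\lambda_1$ into $\omega' + \lambda_1 =: \omega$ gives $\lambda = \tau^r\omega - \tau^i\alpha$ or $\lambda = \tau^r\omega - \beta$, as required.

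For the converse, given any $\lambda$ of the stated form, the key observation is that the weight $\omega'$ appearing in Theorem \ref{Br2G2} for each $(\alpha,i)$ or $\beta$ is unique and fixed, so one may always write $\omega = \omega' + \lambda_1$ for a suitable $\lambda_1 \in X(T)$. The corresponding $\lambda_0 = \tau^r\omega' - \tau^i\alpha$ (resp.\ $\tau^r\omega' - \beta$) lies in $X_{r/2}(T)$ and has $\opH^1(B_{r/2},\lambda_0) \neq 0$ by Theorem \ref{Br2G2}. Applying the tensor identity again, the factor $k_{\tau^r\lambda_1}$ is one-dimensional, hence non-vanishing is independent of the choice of $\lambda_1$. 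This concludes the proof.

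There is essentially no obstacle beyond verifying that the $\omega'$ in each row of Theorem \ref{Br2G2} is well-defined and that the decomposition $\lambda = \lambda_0 + \tau^r\lambda_1$ is unique (which is just the statement that every dominant weight has a unique $\tau^r$-adic expansion with $r/2$-restricted digit). The argument is a formal transcription of the $C_2$ case, and the referee's attention is already directed to Corollary \ref{C2weightform}.
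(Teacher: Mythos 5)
Your proposal is correct and follows exactly the route the paper intends: the paper explicitly states that the proof of this corollary is identical to that of Corollary \ref{C2weightform}, and your transcription of that argument — decomposing $\lambda=\lambda_0+\tau^r\lambda_1$, applying the tensor identity, and reading off the admissible shapes of $\lambda_0$ from Theorem \ref{Br2G2} in both directions — is precisely that proof.
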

Now, we denote by $(\zeta, j)$ the pair $(\alpha, i)$ or $(\beta, 1)$, respectively, as defined in the previous corollary. 
Now, we write $\lambda=\tau^r \omega' - \tau^j \zeta + \tau^r \lambda_1$, for a given $\lambda=\tau^r \omega-\tau^j \zeta$. Supposing the $B_{r/2}$-cohomology does not vanish on $\lambda$, then $\omega'$ is as given in Theorem \ref{Br2G2} and $\lambda_1 \in X(T)$. Then, set $\lambda_1=\omega-\omega'$ and we obtain
\begin{equation*}
\begin{aligned}
    \opH^1(B_{r/2}, \lambda) & \cong \opH^1(B_{r/2}, \lambda_0) \otimes k_{\lambda_1}^{(r/2)} \\
    & \cong \opH^1(B_{r/2}, \tau^r \omega' - \tau^j \zeta) \otimes k_{\omega-\omega'}^{(r/2)}.
\end{aligned}
\end{equation*}
One then substitutes the results from Theorem \ref{Br2G2}. We omit the details for brevity and obtain
\begin{theorem} \label{Br2G2gen}
Let $\lambda \in X(T)$ and $0 \leq i \leq s-2$. Then
	\begin{equation*}\opH^{1}\left(B_{r/2}, \lambda \right) \cong\left\{\begin{array}{ll}
	k_{\omega}^{(r/2)}  & \text { if } \lambda =\tau^r \omega - \tau^{2s-1}\alpha_1, \omega \in X(T) \\
	M^{(r/2)}_{G_2} \otimes k_{\omega+\omega_1-\omega_2}^{(r/2)} & \text { if }   \lambda =\tau^r \omega - \tau^{2s-1}\alpha_2, \omega \in X(T)\\
	k_{\omega}^{(r/2)}  & \text { if } \lambda =\tau^r \omega - \tau^{2i+1}\alpha_j, \omega \in X(T), \alpha_j \in \Pi\\
	0 & \text { else. }
	\end{array}\right.\end{equation*}
\end{theorem}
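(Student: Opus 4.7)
The plan is to mirror the passage from Theorem~\ref{BrC2} to Theorem~\ref{Br2C2gen} (via Corollary~\ref{C2weightform}), merely substituting the $G_2$-inputs from Corollary~\ref{G2weightform} and Theorem~\ref{Br2G2}. First, I would appeal to Corollary~\ref{G2weightform}: $\opH^1(B_{r/2},\lambda)\neq 0$ forces $\lambda$ to have one of the shapes $\tau^r\omega-\tau^i\alpha$ (with $\alpha\in\Pi$, $0\le i\le 2s-1$) or $\tau^r\omega-\beta$ (with $\beta\in\Pi_s$). Second, for any such $\lambda$, I would write $\lambda=\lambda_0+\tau^r\lambda_1$ uniquely with $\lambda_0\in X_{r/2}(T)$ and $\lambda_1\in X(T)$, and invoke the projection-formula decomposition
\[
\opH^1(B_{r/2},\lambda)\;\cong\;\opH^1(B_{r/2},\lambda_0)\otimes k_{\lambda_1}^{(r/2)},
\]
which is valid because $B_{r/2}$ acts trivially on the twist $k_{\lambda_1}^{(r/2)}$. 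This reduces the computation to the restricted-weight case already settled in Theorem~\ref{Br2G2}.

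The matching step then selects, for each admissible pair $(\zeta,j)$ produced by Corollary~\ref{G2weightform}, the unique $\tau$-restricted witness $\lambda_0=\tau^r\omega'-\tau^j\zeta$ listed in one of the rows of Theorem~\ref{Br2G2}. The value of $\omega'$ is read off as $\omega_1$, $\omega_2$, or $\omega_2-\omega_1$ depending on the row, and I would set $\lambda_1:=\omega-\omega'$. Substituting into the display above converts each row whose restricted answer is a one-dimensional twist $k_{\omega'}^{(r/2)}$ into the corresponding $k_{\omega}^{(r/2)}$ in the present statement, since tensoring with $k_{\omega-\omega'}^{(r/2)}$ absorbs the shift.

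The only place where a non-trivial twist correction surfaces is the $\alpha_2$, $j=2s-1$ row of Theorem~\ref{Br2G2}, for which $\omega'=\omega_2-\omega_1$. Then $\lambda_1=\omega+\omega_1-\omega_2$, and the restricted answer $M_{G_2}^{(r/2)}$ becomes $M_{G_2}^{(r/2)}\otimes k_{\omega+\omega_1-\omega_2}^{(r/2)}$, matching the middle line of the claim. I anticipate no genuine obstacle: the proof is pure bookkeeping, with the only things to verify being that the correspondence between the data of Corollary~\ref{G2weightform} and the rows of Theorem~\ref{Br2G2} is bijective on admissible $\lambda$ (so that no case is missed or double-counted) and that the twist shift is correctly tracked in the $\alpha_2$-row. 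Following the model of Theorem~\ref{Br2C2gen}, I would suppress the verbatim case-checks for brevity.
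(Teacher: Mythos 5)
Your proposal is correct and follows essentially the same route as the paper: Corollary \ref{G2weightform} to pin down the shape of $\lambda$, the decomposition $\opH^1(B_{r/2},\lambda)\cong\opH^1(B_{r/2},\lambda_0)\otimes k_{\lambda_1}^{(r/2)}$ with $\lambda_1=\omega-\omega'$, and substitution of the rows of Theorem \ref{Br2G2}, with the twist correction arising only in the $\tau^{2s-1}\alpha_2$ row. One small point: your bookkeeping would also faithfully reproduce the $\lambda=\tau^r\omega-\beta$ rows coming from the first two cases of Theorem \ref{Br2G2} (as in the $C_2$ analogue, Theorem \ref{Br2C2gen}), which the printed statement of Theorem \ref{Br2G2gen} omits.
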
 
\subsubsection*{$G_{r/2}$-cohomology of induced modules}

Using Kempf's vanishing theorem, Theorem \ref{btau-g2}, Theorem \ref{Br2G2} and (\ref{indBG}), we compute $\opH^{1}(G_{r/2}, \opH^{0}(\lambda))$ for $\lambda \in X_{r/2}.$ 
Furthermore, we note that, by \cite[3.1, Theorem (B)]{BNP04b},  $\Ind_B^G(M_{G_{2}})= \opH^0(\omega_1)$.

In the case $r=1$ we obtain
\begin{theorem} \label{gtau-g2}
Let $\lambda \in X_{\tau}(T)$. Then
\begin{equation*}\opH^{1}\left(G_{\tau}, \opH^0(\lambda) \right)^{(-\tau)} \cong\left\{\begin{array}{ll}
\opH^0(\omega_1) & \text { if } \lambda= \omega_1 \\
0 & \text { else. }
\end{array}\right.\end{equation*}
\end{theorem}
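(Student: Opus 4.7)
The plan is to derive this directly from the framework already established in Subsection \ref{ssecFK}. Applying the isomorphism (\ref{indBG}) with $r=1$ (so that $G_{r/2}=G_{\tau}$ and $B_{r/2}=B_{\tau}$) yields, for every $\lambda \in X(T)_{+}$,
\begin{displaymath}
\opH^{1}(G_{\tau}, \opH^{0}(\lambda))^{(-\tau)} \cong \Ind_{B}^{G}\bigl(\opH^{1}(B_{\tau}, \lambda)^{(-\tau)}\bigr).
\end{displaymath}
Thus the task reduces to inducing up the $B_{\tau}$-cohomology, which has already been computed in Theorem \ref{btau-g2}. Since $X_{\tau}(T)=\{0,\omega_{1},2\omega_{1}\}$, that theorem tells us that $\opH^{1}(B_{\tau},\lambda)$ vanishes except when $\lambda=\omega_{1}$, in which case it is isomorphic to $k_{\omega_{2}-\omega_{1}}^{(\tau)} \oplus k_{\omega_{1}}^{(\tau)}$. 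For $\lambda\in\{0,2\omega_{1}\}$ the result is therefore immediate: both sides vanish.

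It remains to analyse the case $\lambda=\omega_{1}$. After untwisting, we must compute
\begin{displaymath}
\Ind_{B}^{G}\bigl(k_{\omega_{1}}\bigr) \;\oplus\; \Ind_{B}^{G}\bigl(k_{\omega_{2}-\omega_{1}}\bigr).
\end{displaymath}
The first summand is, by definition, $\opH^{0}(\omega_{1})$. The more delicate point, which I regard as the key step of the argument, is the vanishing of the second summand. For this, I would observe that $(\omega_{2}-\omega_{1})+\rho = 2\omega_{2}$, and that $\langle 2\omega_{2},\alpha_{1}^{\vee}\rangle = 0$, so the weight $\omega_{2}-\omega_{1}$ lies on the $\alpha_{1}$-wall under the dot action of $W$. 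By the standard vanishing principle for induced modules from weights on reflection walls (see \cite[II.5.4]{Jan03}), this forces $\Ind_{B}^{G}(k_{\omega_{2}-\omega_{1}})=0$. Combining the two summands then gives $\opH^{1}(G_{\tau},\opH^{0}(\omega_{1}))^{(-\tau)}\cong \opH^{0}(\omega_{1})$, completing the proof.

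The only subtlety worth flagging is that Theorem \ref{btau-g2} presents the $B_{\tau}$-cohomology as a genuine $B$-module direct sum rather than a non-split extension (in contrast to the behaviour seen for $r>1$, where $M_{G_{2}}$ arises); this is precisely what lets us decompose the induction term-by-term and apply wall-vanishing to the offending summand in isolation.
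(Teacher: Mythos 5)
Your proposal is correct and follows essentially the same route the paper takes (left implicit there): apply the isomorphism (\ref{indBG}) with $r=1$, feed in Theorem \ref{btau-g2}, and induce the two summands of $\opH^1(B_\tau,\omega_1)^{(-\tau)}$ separately. The only cosmetic difference is that you kill $\Ind_B^G(k_{\omega_2-\omega_1})$ via the dot-action wall criterion \cite[II.5.4]{Jan03} (valid, since $\langle(\omega_2-\omega_1)+\rho,\alpha_1^\vee\rangle=0$), whereas the paper simply uses that $\opH^0(\mu)=0$ for non-dominant $\mu$, which already applies because $\langle\omega_2-\omega_1,\alpha_1^\vee\rangle=-1$.
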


Now, assume $r>1$.
\begin{theorem} \label{g2gr2}
	Let $\lambda \in X_{r/2}(T)$ and $0 \leq i \leq s-2$. Then
	\begin{equation*}\opH^{1}(G_{r/2}, \opH^{0}(\lambda))^{(-r/2)} \cong\left\{\begin{array}{ll}
	\opH^{0}(\omega_1)  & \text { if } \lambda=\omega_1 + (3^{s}-1) \omega_2=\tau^r \omega_1-\beta_2  \\
	 \opH^{0}(\omega_2)  & \text { if } \lambda=(3^{s+1}-2)\omega_1+\omega_2=\tau^r \omega_2-\beta_1\\
    \opH^{0}(\omega_1)  & \text { if } \lambda=3^{s} \omega_1 +3^{s-1} \omega_2=\tau^r \omega_1-\tau^{2s-1} \alpha_1 \\
	\opH^{0}(\omega_1)  & \text { if } \lambda= 3^{s}\omega_1=\tau^r (\omega_2-\omega_1)-\tau^{2s-1} \alpha_2  \\
	\opH^{0}(\omega_1)  & \text { if } \lambda=(3^{s+1}-3^{i} \cdot 2) \omega_2 +3^{i+1}\omega_1=\tau^r \omega_1-\tau^{2i+1} \alpha_1  \\
	\opH^{0}(\omega_2)  & \text { if }  \lambda=3^{i+1}\omega_2 +(3^{s+1}-3^{i+1} \cdot 2)\omega_1=\tau^r \omega_2-\tau^{2i+1} \alpha_2  \\
	0 & \text { else. }
	\end{array}\right.\end{equation*} 
\end{theorem}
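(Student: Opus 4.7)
The plan is to apply the isomorphism (\ref{indBG}) directly to the $B_{r/2}$-cohomology computed in Theorem \ref{Br2G2}. For any $\lambda \in X_{r/2}(T) \subset X(T)_+$,
\[
\opH^{1}(G_{r/2}, \opH^{0}(\lambda))^{(-r/2)} \cong \Ind_B^G\bigl(\opH^{1}(B_{r/2}, \lambda)^{(-r/2)}\bigr),
\]
so the question reduces to evaluating $\Ind_B^G$ on the six non-zero outputs of Theorem \ref{Br2G2}. The weights parametrising the non-zero cases in Theorem \ref{Br2G2} and Theorem \ref{g2gr2} coincide (they are given by the same expressions $\tau^r\omega - \beta$ or $\tau^r\omega - \tau^j\alpha$), so the ``else'' branch of the statement is handled by Kempf vanishing applied in the derivation of (\ref{indBG}).

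Of the six non-zero cases, five give $\opH^{1}(B_{r/2}, \lambda)^{(-r/2)} \cong k_{\omega}$ with $\omega \in \{\omega_1, \omega_2\}$; since each such $\omega$ is dominant, $\Ind_B^G(k_{\omega}) = \opH^0(\omega)$ is immediate from the definition, producing the corresponding lines of Theorem \ref{g2gr2}. The final case, $\lambda = 3^s \omega_1$, gives $\opH^{1}(B_{r/2}, \lambda)^{(-r/2)} \cong M_{G_2}$, the two-dimensional indecomposable $B$-module with head $k_{\omega_1}$ and socle $k_{\omega_2 - \omega_1}$.

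The main (indeed only non-formal) obstacle is the identification of $\Ind_B^G(M_{G_2})$: because $\omega_2 - \omega_1$ is not dominant, $\Ind_B^G(k_{\omega_2 - \omega_1}) = 0$ and a naïve filtration argument would lose information. However, this is exactly the content of \cite[3.1, Theorem (B)]{BNP04b}, cited immediately before Theorem \ref{gtau-g2}, which gives $\Ind_B^G(M_{G_2}) = \opH^0(\omega_1)$. Substituting this into the $M_{G_2}$-case yields the fourth line of Theorem \ref{g2gr2}, and completes the proof.
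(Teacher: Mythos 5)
Your proposal is correct and follows exactly the paper's route: the paper derives Theorem \ref{g2gr2} by applying $\Ind_B^G$ via the isomorphism (\ref{indBG}) to the output of Theorem \ref{Br2G2}, quoting \cite[3.1, Theorem (B)]{BNP04b} for the identification $\Ind_B^G(M_{G_2})=\opH^0(\omega_1)$ in the one non-formal case. The only cosmetic difference is that you attribute the vanishing in the ``else'' branch to Kempf vanishing, whereas it follows simply from $\opH^1(B_{r/2},\lambda)=0$ there; this does not affect correctness.
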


Lastly, based on Theorem \ref{Br2G2gen}, one may calculate $\opH^1(G_{r/2}, \opH^0(\lambda))$ in terms of induced modules for all dominant weights $\lambda$, by applying the induction functor $\Ind_B^G$. We handle the only non-obvious case in the following remark.
\begin{remark}
Let $\tau^r \omega - \tau^{2s-1} \alpha_2 \in X(T)_+$. Then $\langle \omega, \alpha_1^{\vee} \rangle \geq -1$ and 
$\langle \omega, \alpha_2^{\vee} \rangle \geq 1$. In this case, by \cite[Proposition 3.4 (A)]{BNP04b}, we note that
\begin{itemize}
    \item[(i)] if $\langle \omega, \alpha_1^{\vee} \rangle \geq 0$, then $\Ind_{B}^G(M_{G_2} \otimes k_{\omega+\omega_1-\omega_2})$
    has a filtration with factors satisfying the following short exact sequence
    \begin{displaymath}
    0 \to \opH^0(\omega) \to \Ind_{B}^G(M_{C_2} \otimes k_{\omega+\omega_1-\omega_2}) \to \opH^0(\omega+2\omega_1-\omega_2) \to 0.
    \end{displaymath}
    \item[(ii)] if $\langle \omega, \alpha_1^{\vee} \rangle =-1$, then $\Ind_{B}^G(M_{G_2} \otimes k_{\omega+\omega_1-\omega_2})\cong \opH^0(\omega+2\omega_1-\omega_2)$.
\end{itemize}
\end{remark}
\subsubsection*{$G_{r/2}$-cohomology with coefficients in simple modules}  In this subsection, we make use of the  $G_1$-cohomology with coefficients in simple modules, computed in \cite[Proposition 3.5]{Sin94b}, to calculate $\opH^1(G_{s}, L(\lambda))$, for a positive integer $s$ and $\lambda \in X_{s}(T)$.
\begin{theorem} \label{g2gs-simple}
	Let $s$ be a positive integer, $\lambda \in X_s(T)$, $0 \leq i \leq s-1$ and $0 \leq j \leq s-2$. Then
	\begin{equation*}\opH^{1}(G_s, L(\lambda))^{(-s)} \cong\left\{\begin{array}{ll}
	L(\omega_1)  & \text { if } \lambda=3^{s-1} \omega_2 \\
	k  & \text { if } \lambda=3^i(\omega_1+\omega_2) \\
    k  & \text { if } \lambda=3^{j}(\omega_2+3\omega_1)  \\
	0 & \text { else. }
	\end{array}\right.\end{equation*} 
	Note that it is implicit in the statement of the theorem that $s \geq 1$ or $s \geq 2$, depending on the case. 
\end{theorem}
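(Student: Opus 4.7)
The plan is to mirror the proof of Theorem \ref{c2gs-simple}, proceeding by induction on $s$ using the Lyndon--Hochschild--Serre spectral sequence associated to the normal inclusion $G_{s-1} \lhd G_s$. The base case $s=1$ follows from \cite[Proposition 3.5]{Sin94b}. For the inductive step, write $\lambda = \lambda_0 + 3^{s-1}\lambda_1$ with $\lambda_0 \in X_{s-1}$ and $\lambda_1 \in X_1$, and consider
\begin{displaymath}
E_2^{i,j} = \opH^i\!\bigl(G_1, \opH^j(G_{s-1}, L(\lambda_0))^{(-s+1)} \otimes L(\lambda_1)\bigr)^{(s-1)} \Rightarrow \opH^{i+j}(G_s, L(\lambda)).
\end{displaymath}
As usual the goal is to identify $\opH^1(G_s, L(\lambda))$ with either $E^{1,0}$ or $E^{0,1}$ after verifying the relevant $d_2$ vanishes.

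First I would compute $E^{1,0}$: it is non-zero only when $\lambda_0=0$ (so that the $\Hom_{G_{s-1}}(k, L(\lambda_0))$ factor does not vanish), in which case it reduces to $\opH^1(G_1, L(\lambda_1))^{(s-1)}$, read off from Sin's base case. This yields $L(\omega_1)^{(s)}$ when $\lambda = 3^{s-1}\omega_2$ and $k$ when $\lambda = 3^{s-1}(\omega_1+\omega_2)$. Next, $E^{0,1} = \Hom_{G_1}(L(\lambda_1), \opH^1(G_{s-1}, L(\lambda_0))^{(-s+1)})^{(s-1)}$; applying the inductive hypothesis to $\opH^1(G_{s-1}, L(\lambda_0))^{(-s+1)}$ and taking $G_1$-homomorphisms against $L(\lambda_1)$ using the self-duality and simplicity of $L(\omega_1)$ over $G_1$ forces $\lambda_1$ to be either $\omega_1$ (in the $L(\omega_1)$ case giving the contribution at $\lambda = 3^{s-2}(\omega_2+3\omega_1)$) or $0$ (in the two one-dimensional cases, which propagate the indices $i$ and $j$). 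Collating the two edge terms produces exactly the list in the theorem, noting that the case $j=s-2$ in $E^{0,1}$ comes from $\lambda_0 = 3^{s-2}\omega_2$ via the first row of the inductive data.

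The remaining issue is to rule out non-trivial differentials. Since $E^{1,0}$ requires $\lambda_0=0$ and the non-zero instances of $E^{0,1}$ all have $\lambda_0\neq 0$, for each $\lambda$ at most one of the two is non-zero, so I only need to show $d_2: E^{0,1} \to E^{2,0}$ vanishes whenever $E^{0,1} \neq 0$. For $\lambda_0 \neq 0$ we have $\Hom_{G_{s-1}}(k, L(\lambda_0))=0$, whence $E^{2,0}=0$ automatically, so the only remaining case to address is in fact absent; the potential danger would arise only if both $\lambda_0=0$ and $E^{0,1}\neq 0$, which does not occur. (The one place a real obstruction might appear is the analogous issue of whether $\opH^2(G_1, L(\omega_1))=0$ enters anywhere, and indeed a $G_2$-analogue of Lemma \ref{h2g1} can be proved by the same spectral-sequence argument via $G_\tau \lhd G_1$, using Theorem \ref{gtau-g2} together with the fact that $L(\omega_1)$ is injective over $G_\tau$.)

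The main expected obstacle is the bookkeeping in step four: the inductive hypothesis gives three families of nonzero $\opH^1(G_{s-1}, L(\lambda_0))$, and one must carefully check that the $\Hom$ against $L(\lambda_1)$ produces precisely the three families claimed in the theorem for $G_s$, with the right boundary values of the indices $i$ and $j$. In particular, showing that the contribution at $\lambda = 3^{s-2}(\omega_2 + 3\omega_1)$ from the $L(\omega_1)$ piece glues correctly with the top index $j=s-2$ of the $(\omega_2+3\omega_1)$-family is the crucial combinatorial check; the rest of the argument is a direct adaptation of the $C_2$ case.
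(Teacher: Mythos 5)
Your proposal follows the paper's proof essentially verbatim: the same induction on $s$ via the LHS spectral sequence for $G_{s-1}\lhd G_s$, the same identification of $E^{1,0}$ and $E^{0,1}$ from Sin's base case and the inductive hypothesis, and the same observation that every non-vanishing instance of $E^{0,1}$ has $\lambda_0\neq 0$, killing $E^{2,0}$ outright so that no analogue of Lemma \ref{h2g1} is needed (the paper likewise dispenses with it here). The argument is correct as stated.
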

\begin{proof}
We proceed inductively. When $s=1$, we refer the reader to \cite[Proposition 3.5]{Sin94b}.
We write $\lambda=\lambda_0+3^{s-1}\lambda_1$, for $\lambda_0 \in X_{s-1}$ and $\lambda_1 \in X_1.$ Suppose $s>1$ and consider the LHS spectral sequence corresponding to $G_{s-1} \lhd G_s$. The $E_2$-page is given by
\begin{displaymath}
E^{i,j}_2:=\opH^i(G_1, \opH^j(G_{s-1}, L(\lambda_0))^{(-s+1)} \otimes L(\lambda_1))^{(s-1)}.
\end{displaymath}
First, consider the $E^{1,0}$-term. We have
\begin{displaymath}
E^{1,0}_2=\opH^1(G_1, \Hom_{G_{s-1}}(k, L(\lambda_0))^{(-s+1)} \otimes L(\lambda_1))^{(s-1)}.
\end{displaymath}
Note that $E^{1,0} \neq 0$ if and only if $\lambda_0=0$, in which case we obtain 
\begin{equation*}E^{1,0}=\opH^{1}(G_1, L(\lambda_1))^{(s-1)} \cong\left\{\begin{array}{ll}
	L(\omega_1)^{(s)} & \text { if } \lambda_1=\omega_2 \\
	k  & \text { if } \lambda_1=\omega_1+\omega_2 \\
	0 & \text { else. }
	\end{array}\right.\end{equation*} 
(cf. \cite[Proposition 3.5]{Sin94b}). Therefore, recalling that $\lambda=\lambda_0+2^{s-1}\lambda_1$, we may conclude that for $\lambda \in X_s$,
\begin{equation*}E^{1,0}\cong\left\{\begin{array}{ll}
	L(\omega_1)^{(s)} & \text { if } \lambda=3^{s-1}\omega_2 \\
	k  & \text { if } \lambda=3^{s-1}(\omega_1+\omega_2) \\
	0 & \text { else. }
	\end{array}\right.\end{equation*}
Now consider the $E^{0,1}$-term. We have
\begin{displaymath}
E^{0,1}=\Hom_{G_1}( L(\lambda_1), \opH^1(G_{s-1}, L(\lambda_0))^{(-s+1)})^{(s-1)}.
\end{displaymath}
We consider each non-zero instance of $\opH^1(G_{s-1}, L(\lambda_0))$ in turn. For example, by the induction hypothesis, if $\lambda_0=3^{s-2}\omega_2$, then $E^{0,1}=\Hom_{G_1}( L(\lambda_1), L(\omega_1))^{(s-1)}$. Thus $E^{0,1} \neq 0$ if and only if $\lambda_1=\omega_1$; we conclude that $E^{0,1} \cong k$ for $\lambda=3^{s-2}(\omega_2+3\omega_1)$. The other cases follow similarly and we obtain, for $1 \leq i \leq s-2$ and $0 \leq j \leq s-3$
\begin{equation*}E^{0,1}\cong\left\{\begin{array}{ll}
	k & \text { if } \lambda=3^{s-2}(\omega_2+3\omega_1) \\
	k  & \text { if } \lambda=3^{i}(\omega_1+\omega_2)\\
	k  & \text { if } \lambda=3^{j}(\omega_2+3\omega_1) \\
	0 & \text { else. }
	\end{array}\right.\end{equation*}
Notice that there is no choice of $\lambda$ for which $E^{1,0}$ and $E^{0,1}$ are both non-zero. Hence if $E^{1,0} \neq 0$, then $E^{0,1}=0$, implying that $E^{1} \cong E^{1,0}$. Alternatively, suppose that $E^{0,1} \neq 0$. Then either $\lambda=3^{s-2}(\omega_2+3\omega_1)$, $\lambda=3^{i}(\omega_1+\omega_2)$ or $\lambda=3^{j}(\omega_2+3\omega_1)$. Observe that in either case, we obtain $\Hom_{G_{s-1}}(k, L(\lambda_0))=0$. Hence $E^{0,1} \neq 0$ implies $E^{1,0}=E^{2,0}=0$, and we have $E^1 \cong E^{0,1}$. 
\end{proof}
Next, we compute $\opH^{1}(G_{r/2}, L(\lambda))$ for $r$ an odd positive integer and $\lambda \in X_{r/2}$, making use of the previous theorem concerning the cohomology for classical Frobenius kernels.

If $r=1$, we refer the reader to Theorem \ref{gtau-g2}. Otherwise we obtain
\begin{theorem} \label{g2gr2-simple}
	Suppose $r=2s+1>1$ and let $\lambda \in X_{r/2}(T)$, for $0 \leq i \leq s-1$. Then
    \begin{equation*}\opH^{1}(G_{r/2}, L(\lambda))^{(-r/2)} \cong\left\{\begin{array}{ll}
	L(\omega_1)  & \text { if } \lambda=3^s \omega_1 \\
	k  & \text { if } \lambda=3^i(\omega_1+\omega_2) \\
	k  & \text { if } \lambda=3^i(\omega_2+3\omega_1) \\
	0 & \text { else. }
	\end{array}\right.\end{equation*} 
\end{theorem}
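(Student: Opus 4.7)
The plan is to mimic the strategy of Theorem \ref{c2gr2-simple}, replacing the LHS spectral sequence input with the $G_2$ analogues. Write $\lambda = \lambda_0 + 3^s \lambda_1$ with $\lambda_0 \in X_s(T)$ and $\lambda_1 \in X_\tau(T)$, so that by the $\tau$-adic Steinberg tensor product theorem $L(\lambda) \cong L(\lambda_0) \otimes L(\lambda_1)^{(s)}$. Since $G_s \lhd G_{r/2}$ with quotient $G_\tau^{(s)}$, the relevant LHS spectral sequence reads
\begin{displaymath}
E_2^{i,j} = \opH^i\bigl(G_\tau, \opH^j(G_s, L(\lambda_0))^{(-s)} \otimes L(\lambda_1)\bigr)^{(s)} \Rightarrow \opH^{i+j}(G_{r/2}, L(\lambda)),
\end{displaymath}
and I will analyse its five-term exact sequence $0 \to E^{1,0} \to E^1 \to E^{0,1} \to E^{2,0}$.

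For $E^{1,0}$, only $\lambda_0 = 0$ contributes, giving $\opH^1(G_\tau, L(\lambda_1))^{(s)}$. Using Theorem \ref{gtau-g2} together with the $\tau$-restricted cases recorded by Sin in \cite{Sin94b}, this vanishes except when $\lambda_1 = \omega_1$, in which case it equals $L(\omega_1)^{(r/2)}$ and corresponds to $\lambda = 3^s \omega_1$. For $E^{0,1}$, I run through each nonzero instance of $\opH^1(G_s, L(\lambda_0))$ recorded in Theorem \ref{g2gs-simple} and ask for which $\lambda_1 \in X_\tau(T)$ we have $\Hom_{G_\tau}(L(\lambda_1), \opH^1(G_s, L(\lambda_0))^{(-s)}) \neq 0$. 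The case $\lambda_0 = 3^{s-1}\omega_2$ (where the cohomology is $L(\omega_1)$) forces $\lambda_1 = \omega_1$ and gives $\lambda = 3^{s-1}(\omega_2 + 3\omega_1)$; the cases $\lambda_0 = 3^i(\omega_1 + \omega_2)$ and $\lambda_0 = 3^j(\omega_2 + 3\omega_1)$ (where the cohomology is trivial) both force $\lambda_1 = 0$, contributing the remaining cases of the theorem. Merging these yields exactly the tabulated cases, with $k$ contributions supported at $\lambda = 3^i(\omega_1 + \omega_2)$ and $\lambda = 3^i(\omega_2 + 3\omega_1)$ for $0 \leq i \leq s-1$.

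Two consistency checks then finish the proof. First, no $\lambda$ makes both $E^{1,0}$ and $E^{0,1}$ nonzero, so whenever one of them survives the other vanishes. Second, I need $d_2 \colon E^{0,1} \to E^{2,0}$ to be zero when $E^{0,1} \neq 0$. In each of the $E^{0,1}$-nonvanishing cases one has $\lambda_0 \neq 0$, hence $\Hom_{G_s}(k, L(\lambda_0)) = 0$, which forces $E^{2,0} = 0$ and makes $d_2$ trivially zero. Thus $E^1 \cong E^{0,1}$ or $E^1 \cong E^{1,0}$ as appropriate, and the claim follows.

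The most delicate point I anticipate is verifying the $E^{1,0}$ computation when $\lambda_0 = 0$ and $\lambda_1 = 2\omega_1$; Theorem \ref{gtau-g2} is stated for $\opH^0(\lambda)$ rather than $L(\lambda)$, and $L(2\omega_1)$ need not coincide with $\opH^0(2\omega_1)$ in characteristic $3$. To conclude that this case contributes zero (as required, since $\lambda = 2 \cdot 3^s \omega_1$ does not appear in the statement), I will appeal to Sin's explicit computation of $\opH^1(G_\tau, L(\lambda_1))$ for all $\tau$-restricted $\lambda_1$ in \cite[Proposition 3.5]{Sin94b} (or its $G_\tau$-analogue), which rules this weight out. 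The remaining calculations are routine bookkeeping of $\tau$- and $s$-twists, using $\tau(\omega_1) = \omega_2$ and $\tau(\omega_2) = 3\omega_1$ to translate between the $(\tau)$- and $(r/2)$-indexations.
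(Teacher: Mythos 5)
Your proposal is correct and follows essentially the same route as the paper: the same decomposition $\lambda=\lambda_0+3^s\lambda_1$, the same LHS spectral sequence for $G_s\lhd G_{r/2}$, the same identification of $E^{1,0}$ and $E^{0,1}$ via Theorem \ref{gtau-g2} and Theorem \ref{g2gs-simple}, and the same vanishing argument for $E^{2,0}$ using $\Hom_{G_s}(k,L(\lambda_0))=0$. Your explicit attention to the case $\lambda_1=2\omega_1$, where $L(2\omega_1)$ and $\opH^0(2\omega_1)$ may differ, is a point the paper handles only implicitly by citing \cite[Lemma 3.2]{Sin94b}, and your resolution via Sin's computation is the right one.
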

\begin{proof}
For $\lambda \in X_{r/2}$, write $\lambda=\lambda_0+2^s\lambda_1$, for $\lambda_0 \in X_s$ and $\lambda_1 \in X_{\tau}$. Consider the LHS spectral sequence corresponding to $G_{s} \lhd G_{r/2}$. The $E_2$-page is given by
\begin{displaymath}
E^{i,j}_2:=\opH^i(G_{\tau}, \opH^j(G_{s}, L(\lambda_0))^{(-s)} \otimes L(\lambda_1))^{(s)}.
\end{displaymath}
First, consider the $E^{1,0}$-term. We have
\begin{displaymath}
E^{1,0}=\opH^1(G_{\tau}, \Hom_{G_{s}}(k, L(\lambda_0))^{(-s)} \otimes L(\lambda_1))^{(s)}.
\end{displaymath}
Note that $E^{1,0} \neq 0$ if and only if $\lambda_0=0$, in which case we obtain 
\begin{equation*}E^{1,0}=\opH^{1}(G_{\tau}, L(\lambda_1))^{(s)} \cong\left\{\begin{array}{ll}
	L(\omega_1)^{(r/2)} & \text { if } \lambda_1=\omega_1\\
	0  & \text { else. } 
	\end{array}\right.\end{equation*} 
(cf. Theorem \ref{gtau-g2} and \cite[Lemma 3.2]{Sin94b}). Next, consider the $E^{0,1}$-term:
\begin{displaymath}
E^{0,1}=\Hom_{G_{\tau}}(L(\lambda_1), \opH^1(G_{s}, L(\lambda_0))^{(-s)})^{(s)}.
\end{displaymath}
We take each non-zero instance of $\opH^1(G_{s}, L(\lambda_0))^{(-s)}$ from Theorem \ref{g2gs-simple} in turn. If $\lambda_0=3^{s-1}\omega_2$, then 
$E^{0,1}=\Hom_{G_{\tau}}(L(\lambda_1), L(\omega_1))^{(s)}$. Thus $E^{0,1} \neq 0$ if and only if $\lambda_1=\omega_1$. We obtain $E^{0,1} \cong k$ for $\lambda=3^{s-1}(\omega_2+3\omega_1)$. The other cases are similar. We get, for $0 \leq i \leq s-1$ and $0 \leq j \leq s-2$,
\begin{equation*}E^{0,1}\cong\left\{\begin{array}{ll}
	k  & \text { if } \lambda=3^{s-1}(\omega_2+3\omega_1) \\
	k  & \text { if } \lambda=3^{i}(\omega_1+\omega_2) \\
	k & \text { if } \lambda=3^{j}(\omega_2+3\omega_1) \\
	0 & \text { else. }
	\end{array}\right.\end{equation*}
Note that there is no choice of $\lambda$ for which $E^{1,0}$ and $E^{0,1}$ are both non-zero. Hence if $E^{1,0} \neq 0$, then $E^{0,1}=0$, implying that $E^{1} \cong E^{1,0}$. Alternatively, suppose that $E^{0,1} \neq 0$ and notice that for each choice of $\lambda$ above that $\lambda_0 \neq 0$, which forces $\Hom_{G_s}(k,L(\lambda_0))=0$. Hence $E^{1,0}=E^{2,0}=0$, meaning that $E^1 \cong E^{0,1}$. 
\end{proof}
\subsection{$F_4$ in characteristic 2} \label{subsecF4}
Let $G$ be simply connected of type $F_{4}$ over $k$ of characteristic $2$.
Following \cite[Planche VIII]{Bou82}, let $\Phi=\{ \pm \epsilon_{i}, \pm \epsilon_{i} \pm \epsilon_{j}, \frac{1}{2}(\pm \epsilon_{1} \pm \epsilon_{2} \pm \epsilon_{3} \pm \epsilon_{4}) \}$ be the roots of a system of type $F_4$. Writing $\epsilon_{1}=(1,0,0,0)$, $\epsilon_{2}=(0,1,0,0)$,  $\epsilon_{3}=(0,0,1,0)$ and $\epsilon_{4}=(0,0,0,1)$, a base of simple roots is $\Pi:=\{\alpha_1, \alpha_2,\alpha_3, \alpha_4\}$ with
$\alpha_1=(0,1,-1,0)$, $\alpha_2=(0,0,1,-1)$, $\alpha_3=(0,0,0,1)$ and $\alpha_4=\frac{1}{2}(1,-1,-1,-1)$; furthermore, the corresponding fundamental dominant weights are $\omega_1=(1,1,0,0)$, $\omega_2=(2,1,1,0)$, $\omega_3=\frac{1}{2}(3,1,1,1)$ and $\omega_4=(1,0,0,0)$. Then one can check that a set of simple roots of $\Phi_s$ is $\Pi_s:=\left\lbrace \alpha_{3}, \alpha_4, \alpha_2+\alpha_3, \alpha_1+\alpha_2+\alpha_3\right\rbrace $, with $\alpha_4$ being the central node in the Dynkin Diagram. We shall denote these simple roots by $\beta_1=\alpha_3=(0,0,0,1)$, $\beta_2=\alpha_4=\frac{1}{2}(1,-1,-1,-1)$, $\beta_3=\alpha_2+\alpha_3=(0,0,1,0)$ and $\beta_4=\alpha_1+\alpha_2+\alpha_3=(0,1,0,0)$. The special isogeny induces a $\mathbb{Z}$-linear map $\tau^{\ast}$ as before, under which $\omega_4 \mapsto \omega_1 \mapsto 2\omega_4$ and $\omega_3 \mapsto \omega_2 \mapsto 2\omega_3$. We henceforth abuse notation, writing $\tau$ instead of $\tau^{\ast}$.  Consequently, the $\tau$-restricted weights are $0$, $\omega_3$, $\omega_4$ and $\omega_3 + \omega_4.$

\subsubsection*{$B_{\tau}$-cohomology}

For a given $\lambda \in X_{r/2}$, we write $\lambda = \lambda_{0} + \tau(\lambda_{1})$, for $\lambda_0 \in X_{\tau}(T)$ and $\lambda_1 \in X_s(T)$, such that $\opH^1(B_{\tau}, \lambda) \cong \opH^1(B_{\tau}, \lambda_{0}) \otimes  \tau(\lambda_{1})$. Thus, it suffices to compute $\opH^1(B_{\tau}, \lambda_{0})$, for $\lambda_{0} \in X_{\tau}(T)$. 

\begin{theorem} \label{btau-f4}
Let $\lambda_0 \in X_{\tau}(T)$. Then
\begin{equation*}\opH^{1}(B_{\tau}, \lambda_{0}) \cong\left\{\begin{array}{ll}
k_{\omega_{4}}^{(\tau)} \oplus k_{\omega_2-\omega_3}^{(\tau)} \oplus k_{\omega_3-\omega_4}^{(\tau)} & \text { if } \lambda_{0}= \omega_4 \\
k_{\omega_1}^{(\tau)} & \text { if } \lambda_{0}=\omega_3 \\
0 & \text { else. }
\end{array}\right.\end{equation*}
\end{theorem}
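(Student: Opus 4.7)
The plan is to follow exactly the template already established in the proofs of \cref{btau-c2} and \cref{btau-g2}. By \cref{directsum} we have $\opH^1(U_\tau,k)\cong\bigoplus_{i=1}^{4} k_{\beta_i}$ as a $T$-module, with $\beta_1=\alpha_3$, $\beta_2=\alpha_4$, $\beta_3=\alpha_2+\alpha_3$, $\beta_4=\alpha_1+\alpha_2+\alpha_3$, and \cref{Ttau} tells us that
\begin{displaymath}
\opH^1(B_\tau,\lambda_0)\;\cong\;\Bigl[\bigoplus_{i=1}^{4} k_{\beta_i+\lambda_0}\Bigr]^{T_\tau},
\end{displaymath}
so that the $i$-th summand contributes (a one-dimensional piece $k_{\mu}^{(\tau)}$, where $\tau(\mu)=\beta_i+\lambda_0$) if and only if $\beta_i+\lambda_0\in\tau X(T)$. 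The computation thus reduces to a finite check over the $\tau$-restricted weights $\lambda_0\in\{0,\omega_3,\omega_4,\omega_3+\omega_4\}$.

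First I would convert everything to the $\omega$-basis. Using the Cartan matrix of $F_4$ one has $\alpha_1=2\omega_1-\omega_2$, $\alpha_2=-\omega_1+2\omega_2-2\omega_3$, $\alpha_3=-\omega_2+2\omega_3-\omega_4$, $\alpha_4=-\omega_3+2\omega_4$, hence
\begin{displaymath}
\beta_1=-\omega_2+2\omega_3-\omega_4,\qquad \beta_2=-\omega_3+2\omega_4,\qquad \beta_3=-\omega_1+\omega_2-\omega_4,\qquad \beta_4=\omega_1-\omega_4.
\end{displaymath}
Next I would identify $\tau X(T)$ explicitly: since $\tau(\omega_3)=\omega_2$, $\tau(\omega_4)=\omega_1$, $\tau(\omega_1)=2\omega_4$ and $\tau(\omega_2)=2\omega_3$, a weight $x\omega_1+y\omega_2+z\omega_3+w\omega_4$ lies in $\tau X(T)$ if and only if the coefficients $z$ and $w$ of $\omega_3$ and $\omega_4$ are both even, in which case its $\tau$-preimage is $\tfrac{w}{2}\omega_1+\tfrac{z}{2}\omega_2+y\omega_3+x\omega_4$.

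With the characterisation in hand, the remainder is a straightforward case-by-case inspection of the sixteen weights $\beta_i+\lambda_0$. For $\lambda_0=0$ every $\beta_i$ has an odd $\omega_4$- or $\omega_3$-coefficient, so all four summands vanish. For $\lambda_0=\omega_3$ only $\beta_2+\omega_3=2\omega_4=\tau(\omega_1)$ survives, contributing $k_{\omega_1}^{(\tau)}$. For $\lambda_0=\omega_4$ the three surviving summands $\beta_1+\omega_4=\tau(\omega_2-\omega_3)$, $\beta_3+\omega_4=\tau(\omega_3-\omega_4)$ and $\beta_4+\omega_4=\tau(\omega_4)$ combine to $k_{\omega_4}^{(\tau)}\oplus k_{\omega_2-\omega_3}^{(\tau)}\oplus k_{\omega_3-\omega_4}^{(\tau)}$ (and $\beta_2+\omega_4$ is ruled out by an odd $\omega_3$-coefficient). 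For $\lambda_0=\omega_3+\omega_4$ all four sums again fail the parity condition.

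There is no genuine obstacle here beyond bookkeeping: the proof is a mechanical transcription of the $C_2$ and $G_2$ arguments once the $\omega$-coordinates of the $\beta_i$ and the parity description of $\tau X(T)$ are in place. The only subtlety worth flagging is that, unlike the earlier two cases, the case $\lambda_0=\omega_4$ has three non-zero contributions rather than two, because for $F_4$ the short-root subsystem $\Phi_s$ has type $D_4$ and thus four simple roots. I would write out the $\omega_4$ case explicitly and only summarise the others, since they involve no new phenomena.
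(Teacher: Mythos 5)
Your proposal is correct and takes essentially the same approach as the paper: both reduce to checking, via Lemmas \ref{directsum} and \ref{Ttau}, which of the sixteen weights $\beta_i+\lambda_0$ lie in $\tau X(T)$, and your case-by-case outcomes (including the identifications $\beta_1+\omega_4=\tau(\omega_2-\omega_3)$, $\beta_3+\omega_4=\tau(\omega_3-\omega_4)$, $\beta_4+\omega_4=\tau(\omega_4)$, $\beta_2+\omega_3=\tau(\omega_1)$) agree exactly with the paper's. The only cosmetic difference is that you work entirely in the $\omega$-basis with the parity criterion on the $\omega_3$- and $\omega_4$-coefficients, whereas the paper first computes in the $\epsilon$-coordinates of \cite[Planche VIII]{Bou82} before translating; this changes nothing of substance.
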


\begin{proof}
Much like in the other cases, regarded as a $T$-module, $\opH^1(U_{\tau}, k) \otimes \lambda_{0}$ is the direct sum of certain $k_{\beta_i + \lambda_{0}}$, for $\beta_i \in \Pi_s$. Given the fact that such a summand yields a non-zero contribution to  $\opH^1(B_{\tau}, \lambda_{0})$ if and only if $\beta_i + \lambda_{0} \in \tau X(T)$, we now inspect which of these weights belong to $\tau X(T).$

To begin with, suppose $\lambda_{0}=0$. It is readily verified that we have no non-zero contribution. Therefore, $\opH^1(B_{\tau}, k)=0$.
	
Then, suppose $\lambda_{0}=\omega_4$. We have
	\begin{displaymath}
	\beta_1+\omega_4=(1,0,0,1)=-\omega_2+2\omega_3=\tau(\omega_2-\omega_3).
	\end{displaymath}
	\begin{displaymath}
	\beta_2+\omega_4=\frac{1}{2}(3,-1,-1,-1)=-\omega_3+3\omega_4 \notin \tau X(T).
	\end{displaymath}
	\begin{displaymath}
	\beta_3+\omega_4=(1,0,1,0)=-\omega_1+\omega_2=\tau(\omega_3-\omega_4).
	\end{displaymath}
	\begin{displaymath}
	\beta_4+\omega_4=(1,1,0,0)=\omega_1=\tau(\omega_4).
	\end{displaymath}
	 Hence,  
	 \begin{equation*}
	 \begin{aligned}
	 \opH^1(B_{\tau}, \omega_4) & \cong \left[ \bigoplus_{i} k_{\beta_i + \omega_1} \right]^{T_{\tau}} \cong \left[ k_{\tau(\omega_4)} \oplus k_{\tau(\omega_2-\omega_3)} \oplus k_{\tau(\omega_3-\omega_4)} \right]^{T_{\tau}} \\
	 & \cong  k_{\omega_4}^{(\tau)} \oplus k_{\omega_2-\omega_3}^{(\tau)} \oplus k_{\omega_3-\omega_4}^{(\tau)}.
	 \end{aligned}
	 \end{equation*}
	
Now let $\lambda_{0}=\omega_3$ and we obtain
	\begin{displaymath}
	\beta_1+\omega_3=\frac{1}{2}(3,1,1,3)=-\omega_2+3\omega_3-\omega_4 \notin \tau X(T).
	\end{displaymath}
	\begin{displaymath}
	\beta_2+\omega_3=(2,0,0,0)=2\omega_4=\tau(\omega_1).
	\end{displaymath}
	\begin{displaymath}
	\beta_3+\omega_3=\frac{1}{2}(3,1,3,1)=-\omega_1+\omega_2+\omega_3-\omega_4 \notin \tau X(T).
	\end{displaymath}
	\begin{displaymath}
	\beta_4+\omega_3=\frac{1}{2}(3,3,1,1)=\omega_1+\omega_3-\omega_4 \notin \tau X(T).
	\end{displaymath}
	Then,  $\opH^1(B_{\tau}, \omega_3) \cong k_{\omega_1}^{(\tau)} $.
	
Finally, for $\lambda_{0}=\omega_3+\omega_4$, we get
	\begin{displaymath}
	\beta_1+\omega_3+\omega_4=\frac{1}{2}(5,1,1,3)=-\omega_2+3\omega_3 \notin \tau X(T).
	\end{displaymath}
	\begin{displaymath}
	\beta_2+\omega_3+\omega_4=(3,0,0,0)=3\omega_4 \notin \tau X(T).
	\end{displaymath}
	\begin{displaymath}
	\beta_3+\omega_3+\omega_4=\frac{1}{2}(5,1,3,1)=-\omega_1+\omega_2+\omega_3 \notin \tau X(T).
	\end{displaymath}
	\begin{displaymath}
	\beta_4+\omega_3+\omega_4=\frac{1}{2}(5,3,1,1)=\omega_1+\omega_3 \notin \tau X(T).
	\end{displaymath}
	Then, $\opH^1(B_{\tau}, \omega_3+\omega_4)=0 $.
\end{proof}
\subsubsection*{$B_{r/2}$-cohomology}In this subsection, we extend the results of the previous section to calculate $\opH^1(B_{r/2}, \lambda)$, for $\lambda \in X_{r/2}(T)$.

If $r=1$, we direct the reader to Theorem \ref{btau-f4}.

\begin{theorem} \label{Br2F4}
	Suppose $r=2s+1>1$ and let $\lambda \in X_{r/2}(T)$. Then, for $ 0 \leq i \leq s-2$, we have
	\begin{equation*}\opH^{1}\left(B_{r/2}, \lambda \right) \cong\left\{\begin{array}{ll}
	k_{\omega_{1}}^{(r/2)}  & \text { if } \lambda =\omega_3+2(2^s-1)\omega_4=\tau^r \omega_1-\beta_2  \\
	 k_{\omega_{2}}^{(r/2)}  & \text { if } \lambda =\omega_4+\omega_2+2(2^s-1)\omega_3=\tau^r \omega_2-\beta_1\\
	 k_{\omega_{3}}^{(r/2)}  & \text { if } \lambda =\omega_4+\omega_1+(2^s-1)\omega_2=\tau^r \omega_3-\beta_3\\
	 k_{\omega_{4}}^{(r/2)}  & \text { if } \lambda =\omega_4+(2^s-1) \omega_1=\tau^r \omega_4-\beta_4\\
	k_{\omega_{1}}^{(r/2)}  & \text { if } \lambda=2^{s} \omega_3=\tau^r \omega_1-\tau^{2s-1}\alpha_1 \\
	k_{\omega_3}^{(r/2)} & \text { if } \lambda=2^{s} \omega_3+ 2^{s-1}\omega_1=\tau^r \omega_3-\tau^{2s-1}\alpha_3 \\
	k_{\omega_4}^{(r/2)} & \text { if } \lambda= 2^{s-1}\omega_2=\tau^r \omega_4-\tau^{2s-1}\alpha_4 \\
	M^{(r/2)}_{F_4} & \text { if }  \lambda=2^{s} \omega_4=\tau^r(\omega_2-\omega_3)-\tau^{2s-1}\alpha_2 \\
	k_{\omega_{1}}^{(r/2)}  & \text { if } \lambda=(2^{s+1}-2^{i+2})\omega_4 +2^{i+1}\omega_3=\tau^r \omega_1-\tau^{2i+1}\alpha_1\\
	k_{\omega_2}^{(r/2)} & \text { if } \lambda=2^{i+1}\omega_2 +(2^{s+1}-2^{i+2})\omega_3 + 2^{i+1}\omega_4=\tau^r \omega_2-\tau^{2i+1}\alpha_2 \\
	k_{\omega_3}^{(r/2)} & \text { if } \lambda=2^{i}\omega_1 +(2^{s}-2^{i+1})\omega_2 + 2^{i+1}\omega_3=\tau^r \omega_2-\tau^{2i+1}\alpha_2 \\
	k_{\omega_4}^{(r/2)} & \text { if } \lambda=(2^{s}-2^{i+1})\omega_1 +2^{i}\omega_2=\tau^r \omega_4-\tau^{2i+1}\alpha_4\\
	0 & \text { else. }
	\end{array}\right.\end{equation*}
	Here $M_{F_4}$ denotes the 3-dimensional indecomposable $B$-module  with the following factors: head $k_{\omega_{4}}$, $k_{\omega_{3}-\omega_{4}}$ and socle $k_{\omega_{2}-\omega_{3}}$ (cf. \cite[2.2]{BNP04b}). We underline that the last four non-zero instances only occur when $s \geq 2$ (or $r \geq 5$). 
\end{theorem}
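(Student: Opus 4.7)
The plan is to follow the strategy of Theorems \ref{BrC2} and \ref{Br2G2}, running the Lyndon--Hochschild--Serre spectral sequence
\begin{displaymath}
E_2^{i,j} = \opH^i(B_{r/2}/B_\tau, \opH^j(B_\tau, \lambda)) \Rightarrow \opH^{i+j}(B_{r/2}, \lambda)
\end{displaymath}
attached to $B_\tau \lhd B_{r/2}$, and using the associated five-term sequence to identify $\opH^1(B_{r/2},\lambda)$ with either $E^{1,0}$ or $E^{0,1}$. As before, write $\lambda = \lambda_0 + \tau(\lambda')$ with $\lambda_0 \in X_\tau(T)$ and $\lambda' \in X_s(T)$, so that $\opH^j(B_\tau,\lambda) \cong \opH^j(B_\tau,\lambda_0)\otimes\tau(\lambda')$.

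For $E^{0,1}$, Theorem \ref{btau-f4} tells me that $\opH^1(B_\tau,\lambda_0)=0$ unless $\lambda_0\in\{\omega_3,\omega_4\}$: for $\lambda_0=\omega_3$ I get a single summand $k_{\omega_1}^{(\tau)}$, while $\lambda_0=\omega_4$ produces the three-fold sum $k_{\omega_4}^{(\tau)}\oplus k_{\omega_2-\omega_3}^{(\tau)}\oplus k_{\omega_3-\omega_4}^{(\tau)}$. Thus
\begin{displaymath}
E^{0,1} \cong \bigoplus_{\mu}\Hom_{B_{(r-1)/2}}(k, k_{\mu+\lambda'}^{(\tau)}),
\end{displaymath}
with $\mu$ running through the relevant four weights, and each summand contributes precisely when $\mu+\lambda'\in\tau^{r-1}X(T)$. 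For each admissible pair $(\lambda_0,\mu)$ I then solve for the unique $\lambda'\in X_s(T)$ satisfying this condition; these solutions produce the four "boundary'' cases $\lambda=\tau^r\omega-\beta$ with $\beta\in\Pi_s$ listed in the statement.

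For $E^{1,0}$, since $\Hom_{B_\tau}(k,\lambda_0)=0$ unless $\lambda_0=0$, the term vanishes unless $\lambda=\tau(\lambda')$, in which case $E^{1,0}\cong\opH^1(B_s,\lambda')^{(\tau)}$. Since $B_s$ is a classical Frobenius kernel, I invoke the $F_4$ case of \cite[Theorem 2.7]{BNP04b} to enumerate the non-vanishing $\lambda'$; substituting $\lambda=\tau(\lambda')$ and using $\tau\omega_4=\omega_1$, $\tau\omega_3=\omega_2$ together with the root identities recovers the remaining "interior'' cases of the statement, including the exceptional entry built from $M_{F_4}$. Finally I check that the two lists of $\lambda$ are disjoint (one forces $\lambda_0=0$, the other $\lambda_0\in\{\omega_3,\omega_4\}$), so $E^1$ equals whichever of $E^{1,0},E^{0,1}$ is non-zero; and whenever $E^{0,1}\neq 0$ I have $\lambda_0\neq 0$, which forces $\Hom_{B_\tau}(k,\lambda)=0$ and hence $E^{2,0}=0$, killing the differential $d_2:E^{0,1}\to E^{2,0}$.

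The principal obstacle will be the bookkeeping inside $E^{0,1}$: $F_4$ has strictly more contributing $\tau$-restricted weights than $C_2$ or $G_2$, so for each of the four possible $\mu\in\{\omega_4,\omega_2-\omega_3,\omega_3-\omega_4,\omega_1\}$ I must solve $\mu+\lambda'\in\tau^{r-1}X(T)$ for some $\lambda'\in X_s(T)$, and verify that these four conditions are mutually exclusive, equivalently that the pairwise differences of these $\mu$'s never lie in $\tau^{r-1}X(T)$, so that each non-vanishing $E^{0,1}$ is one-dimensional. The $E^{1,0}$ side is more mechanical, essentially a citation plus a change-of-variable from $\lambda'$ back to $\lambda=\tau\lambda'$.
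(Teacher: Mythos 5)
Your proposal is correct and follows essentially the same route as the paper's proof: the same LHS spectral sequence for $B_\tau\lhd B_{r/2}$, the same decomposition $\lambda=\lambda_0+\tau(\lambda')$ feeding Theorem \ref{btau-f4} into $E^{0,1}$ and \cite[Theorem 2.7]{BNP04b} into $E^{1,0}$, and the same observations that the two lists of $\lambda$ are disjoint and that $E^{0,1}\neq 0$ forces $\lambda_0\neq 0$, hence $\Hom_{B_\tau}(k,\lambda)=0$ and $E^{2,0}=0$. The bookkeeping you flag (solving $\mu+\lambda'\in\tau^{r-1}X(T)$ for each of the four $\mu$ and checking that at most one condition holds at a time) is exactly what the paper carries out.
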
 

\begin{proof}
    The second equality in each case identifying two forms of $\lambda$ follows immediately, recalling $\tau(\omega_4)=\omega_1$ and $\tau(\omega_3)=\omega_2$. We thus show that $\lambda$ must be equal to one of the weights given by the first equality in each case. We consider the LHS spectral sequence
	\begin{displaymath}
	E_{2}^{i,j} = \opH^i(B_{r/2}/B_{\tau}, \opH^j(B_{\tau}, \lambda)) \Rightarrow \opH^{i+j}(B_{r/2}, \lambda)
	\end{displaymath}
	and the corresponding five-term exact sequence
	\begin{displaymath}
	0 \to E^{1,0} \to E^1 \to E^{0,1} \to E^{2,0} \to E^2.
	\end{displaymath}
	Much like in the previous subsections, we shall identify $E^1$ with either $E^{0,1}$ or $E^{1,0}$, in order to determine all of the non-zero cases. We must first fix some notation.
	Since $\lambda \in X_{r/2}(T)$, we may uniquely write $\lambda= \sum_{i=0}^{r-1} \tau^{i} \lambda_{i}$, where $\lambda_i$ are $\tau$-restricted. Then,  $\lambda=\lambda_{0}+ \tau(\lambda')$, for $\lambda'= \sum_{j=1}^{r-1} \tau^{j-1} \lambda_{j}$. Suppose $E^{0,1}\neq 0$ and we have
	We have
	\begin{equation*}
	\begin{aligned}
	E^{0,1} & = \Hom_{B_{r/2}/B_{\tau}}(k, \opH^1(B_{\tau}, \lambda)) \\
	        & \cong \Hom_{B_{r/2}/B_{\tau}}(k, \opH^1(B_{\tau}, \lambda_{0}) \otimes \tau(\lambda')).
	\end{aligned}
	\end{equation*}
	There are two $\tau$-restricted weights for which  $\opH^{1}(B_{\tau}, \lambda_{0}) \neq 0$, namely $\omega_4$ and $\omega_3$, and we consider each case in turn.
	
	First, suppose $\lambda_{0}=\omega_{4}$ and we have  $\opH^{1}(B_{\tau}, \omega_4) = k_{\omega_4}^{(\tau)} \oplus k_{\omega_2-\omega_3}^{(\tau)} \oplus k_{\omega_3-\omega_4}^{(\tau)}$. Hence
	\begin{equation*}
	\begin{aligned}
	E^{0,1} & = \Hom_{B_{r/2}/B_{\tau}}( k, (k_{\omega_4}^{(\tau)} \oplus k_{\omega_2-\omega_3}^{(\tau)} \oplus k_{\omega_3-\omega_4}^{(\tau)}) \otimes \tau(\lambda')) \\
	& \cong \Hom_{B_{(r-1)/2}}( k, (k_{\omega_4}^{(\tau)} \oplus k_{\omega_2-\omega_3}^{(\tau)} \oplus k_{\omega_3-\omega_4}^{(\tau)}) \otimes k_{\lambda'}^{(\tau)})  \\
	& \cong \Hom_{B_{(r-1)/2}}( k, k_{\omega_4+\lambda'}^{(\tau)} \oplus k_{\omega_2-\omega_3+\lambda'}^{(\tau)} \oplus k_{\omega_3-\omega_4+\lambda'}^{(\tau)}).
	\end{aligned}
	\end{equation*}
    Notice that $\Hom_{B_{(r-1)/2}} (k, k_{\omega_4+\lambda'}^{(\tau)} \oplus k_{\omega_2-\omega_3+\lambda'}^{(\tau)} \oplus k_{\omega_3-\omega_4+\lambda'}^{(\tau)}) $ is either zero or one-dimensional: at most one of $\omega_4+\lambda'$, $\omega_2-\omega_3+\lambda'$ or $ \omega_3-\omega_4+\lambda' \in \tau^{r-1}X(T)$.
	
	First, suppose $\omega_4+\lambda' \in \tau^{r-1}X(T)$. As $p=2$, we obtain $\lambda'=a2^{s}  \omega_1+ b2^{s}  \omega_2+c2^{s} \omega_3+(d2^{s}-1) \omega_4 \in X_s(T)$. It follows that we must have $a=b=c=0$ and $d=1$, pushing $\lambda=\lambda_0+\tau(\lambda')=\omega_4+(2^s-1)\omega_1$ and 
	\[E^{0,1}=\Hom_{B_{r/2}/B_{\tau}}(k, k_{\tau(\omega_4+\lambda')}\oplus k_{\tau(\omega_2-\omega_3+\lambda')}\oplus k_{\tau(\omega_3-\omega_4+\lambda')}).\]
	The first term in the target of the $\Hom$ is $k_{\omega_1+(2^s-1) \omega_1}=k_{2^s\omega_1}$. Thus $E^{0,1}\cong k_{2^s\omega_1}=k_{\omega_4}^{(r/2)}$.

	Now assume $\omega_2-\omega_{3}+\lambda' \in \tau^{r-1} X(T)$ and a similar argument leads us to conclude that $E^{0,1}=k_{\omega_2}^{(r/2)}$ for $\lambda=\omega_2+2(2^s-1)\omega_3+\omega_4$.

	Lastly, suppose $\omega_3-\omega_{4}+\lambda' \in \tau^{r-1} X(T)$, and we obtain $E^{0,1}=k_{\omega_3}^{(r/2)}$ for $\lambda=\omega_1+(2^s-1)\omega_2+\omega_4$.
	
Analogously, the case where $\lambda_{0} =\omega_3$ leads to $E^{0,1} \cong  k_{\omega_1}^{(r/2)}$, when $\lambda=\omega_3+2(2^s-1)\omega_4$. 

Overall, we conclude that for $\lambda \in X_{r/2}(T)$,
\begin{equation*}E^{0,1} \cong \left\{\begin{array}{ll}
	 k_{\omega_{1}}^{(r/2)}  & \text { if } \lambda =\omega_3+2(2^s-1)\omega_4  \\
	 k_{\omega_{2}}^{(r/2)}  & \text { if } \lambda =\omega_4+\omega_2+2(2^s-1)\omega_3\\
	 k_{\omega_{3}}^{(r/2)}  & \text { if } \lambda =\omega_4+\omega_1+(2^s-1)\omega_2\\
	 k_{\omega_{4}}^{(r/2)}  & \text { if } \lambda =\omega_4+(2^s-1) \omega_1\\
	 0 & \text { else. }
	 \end{array}\right.\end{equation*}
	 Now suppose $E^{1,0}\neq 0$. We have 
	 \begin{align*}E^{1,0}&=\opH^{1}(B_{r/2} / B_{\tau}, \Hom_{B_{\tau}}(k, \lambda)), \\
	 &=\opH^{1}(B_{r/2} / B_{\tau}, \Hom_{B_{\tau}}(k, \lambda_0)\otimes \tau(\lambda'))
	 \end{align*}
	 so $\lambda_0=0$ and $\lambda=\tau(\lambda')$. Thus $E^{1,0}\cong \opH^{1}( B_{s},  {\lambda'} ^{(\tau)}) \cong  \opH^{1}( B_{s},  {\lambda'})^{(\tau)}$
	 for $\lambda=\tau {\lambda'}$. Notice that since $r-1=2s>0$, $B_{(r-1)/2}=B_{s}$ is a classical Frobenius kernel and $\opH^{1}( B_{s},  {\lambda'})$ is the $B_s$-cohomology for ${\lambda'} \in X_{s}(T)$ computed in \cite[Theorem 2.7]{BNP04b}. We have
	 \begin{equation*}\opH^{1}(B_{s}, {\lambda'} ) \cong\left\{\begin{array}{ll}
	 k_{\omega_{j}}^{(s)}  & \text { if } {\lambda'}=2^{s} \omega_j-2^{s-1} \alpha_{j}, j \in \left\lbrace 1,3,4 \right\rbrace  \\
	 M^{(s)}_{F_4} & \text { if }  {\lambda'}=2^{s-1} \omega_1 \\
	 k_{\omega_{\alpha}}^{(s)}  & \text { if } {\lambda'}=2^{s}\omega_{\alpha} -2^{i}\alpha, \alpha \in \Pi, 0 \leq i \leq s-2\\
	 0 & \text { else. }
	 \end{array}\right.\end{equation*}
	 with $M_{F_4}$ having the structure as claimed in the statement of the theorem. We note the implicit constraints on $s$ in the different cases. Thus,
	 \begin{equation*}E^{1,0} \cong \opH^{1}(B_{s}, {\lambda'} )^{(\tau)} \cong\left\{\begin{array}{ll}
	 k_{\omega_{j}}^{(r/2)}  & \text { if } {\lambda'}=2^{s} \omega_j-2^{s-1} \alpha_{j}, j \in \left\lbrace 1,3,4 \right\rbrace  \\
	 M^{(r/2)}_{F_4} & \text { if }  {\lambda'}=2^{s-1} \omega_1 \\
	 k_{\omega_{\alpha}}^{(r/2)}  & \text { if } {\lambda'}=2^{s}\omega_{\alpha} -2^{i}\alpha, \alpha \in \Pi, 0 \leq i \leq s-2\\
	 0 & \text { else. }
	 \end{array}\right.\end{equation*}
	 
	 Lastly, one may recover $\lambda$ from ${\lambda'}$, recalling $\alpha_1=2\omega_{1}-\omega_{2}, \alpha_2=-\omega_{1}+2\omega_{2}-2\omega_{3}$, $\alpha_3=-\omega_{2}+2\omega_{3}-\omega_{4}$ and $\alpha_4=-\omega_{3}+2\omega_{4}$. 
	 
For example, when ${\lambda'} =2^{s}\omega_{1}-2^{s-1}\alpha_1$, then
$\lambda=\tau {\lambda'}=2^s \omega_{4}-2^{s-1}\tau(2\omega_{1}-\omega_{2})=2^s \omega_{3}$. The other cases follow similarly.

By the discussion above, notice that there is no $\lambda$ for which $E^{0,1}$ and $E^{1,0}$ are both non-zero. Thus, if $E^{0,1}=0$, then $E^1 \cong E^{1,0}$. Alternatively, if $E^{0,1} \neq 0$, then $\lambda$ must be one of the following: either $\lambda =\omega_3+2(2^s-1)\omega_4$, $\lambda =\omega_4+\omega_2+2(2^s-1)\omega_3$, $\lambda =\omega_4+\omega_1+(2^s-1)\omega_2$ or $\lambda =\omega_4+(2^s-1) \omega_1$. Clearly, in all of these cases, $\lambda \notin \tau X(T)$, thus forcing $\Hom_{B_{\tau}}(k, \lambda)=0.$ Hence $E^{1,0}=E^{2,0}=0$, implying that $E^1 \cong E^{0,1}$.
\end{proof}	
For a general $\lambda \in X(T)$, not necessarily lying in $X_{r/2}$, we proceed as in \cite[2.8]{BNP04b}. First, we make the following observation, whose proof is identical to the proof of Corollary \ref{C2weightform}:
\begin{cor} \label{F4weightform}
    Let $\lambda \in X(T)$. Then $\opH^1(B_{r/2}, \lambda) \neq 0$ if and only if $\lambda= \tau^r \omega - \tau^i \alpha$, for some weight $\omega \in X(T)$, and $\alpha \in \Pi$ with $0 \leq i \leq 2s-1$ or $\lambda= \tau^r \omega - \beta$, for some weight $\omega \in X(T)$, and $\beta \in \Pi_s$.
\end{cor}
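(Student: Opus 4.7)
The plan is to mirror the argument used in the proof of Corollary \ref{C2weightform}, leveraging Theorem \ref{Br2F4} together with the unique decomposition of weights relative to $X_{r/2}(T)$.

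First, I would fix $\lambda \in X(T)$ and write it uniquely as $\lambda = \lambda_0 + \tau^r \lambda_1$, with $\lambda_0 \in X_{r/2}(T)$ and $\lambda_1 \in X(T)$. Since $B_{r/2}$ acts trivially on $\tau^r \lambda_1$, projection formula yields
\[
\opH^1(B_{r/2}, \lambda) \;\cong\; \opH^1(B_{r/2}, \lambda_0) \otimes k_{\lambda_1}^{(r/2)}.
\]
Thus $\opH^1(B_{r/2}, \lambda)$ is non-zero precisely when $\opH^1(B_{r/2}, \lambda_0)$ is.

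For the forward direction, suppose $\opH^1(B_{r/2}, \lambda) \neq 0$. Theorem \ref{Br2F4} lists all $\lambda_0 \in X_{r/2}(T)$ with non-vanishing $B_{r/2}$-cohomology; inspection of the list shows that every such $\lambda_0$ has the form $\lambda_0 = \tau^r \omega' - \beta$ for some $\beta \in \Pi_s$ and a specific $\omega' \in X(T)$, or $\lambda_0 = \tau^r \omega' - \tau^i \alpha$ for some $\alpha \in \Pi$, $0 \leq i \leq 2s-1$, and a specific $\omega' \in X(T)$. Substituting into $\lambda = \lambda_0 + \tau^r \lambda_1$, and setting $\omega = \omega' + \lambda_1$, we obtain $\lambda = \tau^r \omega - \beta$ or $\lambda = \tau^r \omega - \tau^i \alpha$ accordingly, establishing the required form.

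For the converse, given $\lambda = \tau^r \omega - \tau^i \alpha$ (resp.\ $\lambda = \tau^r \omega - \beta$), choose the specific $\omega'$ from Theorem \ref{Br2F4} associated with the pair $(\alpha, i)$ (resp.\ with $\beta$), and set $\lambda_1 := \omega - \omega'$ and $\lambda_0 := \tau^r \omega' - \tau^i \alpha$ (resp.\ $\tau^r \omega' - \beta$). Then $\lambda = \lambda_0 + \tau^r \lambda_1$ with $\lambda_0 \in X_{r/2}(T)$ and $\opH^1(B_{r/2}, \lambda_0) \neq 0$ by Theorem \ref{Br2F4}, so by the tensor identity $\opH^1(B_{r/2}, \lambda) \neq 0$.

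The only mildly delicate point is the bookkeeping check that every non-vanishing instance recorded in Theorem \ref{Br2F4} genuinely falls under one of the two prescribed shapes; this is purely mechanical, amounting to reading off the equalities in the second column of the table in Theorem \ref{Br2F4} (which were precisely set up to make the weight $\lambda_0$ visibly of the form $\tau^r \omega' - \tau^i \alpha$ or $\tau^r \omega' - \beta$). Since the paper already flags that the argument is identical to the $C_2$ case, I would simply cite Corollary \ref{C2weightform}'s proof rather than redo the details.
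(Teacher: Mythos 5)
Your proposal is correct and follows essentially the same route as the paper: the paper's proof of Corollary \ref{F4weightform} is explicitly declared to be identical to that of Corollary \ref{C2weightform}, which proceeds exactly as you do — write $\lambda=\lambda_0+\tau^r\lambda_1$ with $\lambda_0\in X_{r/2}(T)$, use $\opH^1(B_{r/2},\lambda)\cong\opH^1(B_{r/2},\lambda_0)\otimes\tau^r\lambda_1$, and read the admissible $\lambda_0$ off Theorem \ref{Br2F4} in both directions. Nothing further is needed.
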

Like in the previous cases, let $(\zeta, j)$ denote the appropriate pair, $(\alpha, i)$ or $(\beta, 1)$, defined in the previous corollary.
Given $\lambda=\tau^r \omega-\tau^j \zeta$, we may write $\lambda=\tau^r \omega' - \tau^j \zeta + \tau^r \lambda_1$. The non-vanishing of $\opH^1(B_{r/2},\lambda)$ is solely dependent on the choice of $\lambda_0$, so $\omega'$ is as given in Theorem \ref{Br2F4} for some $\lambda_1 \in X(T)$. Then, set $\lambda_1=\omega-\omega'$ and we get $\opH^1(B_{r/2}, \lambda)\cong \opH^1(B_{r/2}, \tau^r \omega' - \tau^j \zeta) \otimes k_{\omega-\omega'}^{(r/2)}.$

Substituting the results from Theorem \ref{Br2F4} leads to the the following result
\begin{theorem} \label{Br2F4gen}
Let $\lambda \in X(T).$ and $0 \leq i \leq s-2$. Then
	\begin{equation*}\opH^{1}\left(B_{r/2}, \lambda \right) \cong\left\{\begin{array}{ll}
	k_{\omega}^{(r/2)}  & \text { if } \lambda =\tau^r \omega - \beta, \omega \in X(T), \beta \in \Pi_s \\
	k_{\omega}^{(r/2)}  & \text { if } \lambda =\tau^r \omega - \tau^{2s-1}\alpha_j, \omega \in X(T),\\
	& \quad\alpha_j \in \Pi, j \in \lbrace 1,3,4 \rbrace \\
	M^{(r/2)}_{F_4} \otimes k_{\omega+\omega_3-\omega_2}^{(r/2)} & \text { if }   \lambda =\tau^r \omega - \tau^{2s-1}\alpha_2, \omega \in X(T)\\
	k_{\omega}^{(r/2)}  & \text { if } \lambda =\tau^r \omega - \tau^{2i+1}\alpha_j, \omega \in X(T), \alpha_j \in \Pi\\
	0 & \text { else. }
	\end{array}\right.\end{equation*}
\end{theorem}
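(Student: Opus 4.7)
The plan is to follow exactly the strategy used in Theorem \ref{Br2C2gen} and Theorem \ref{Br2G2gen}, reducing the computation of $\opH^1(B_{r/2}, \lambda)$ for an arbitrary $\lambda \in X(T)$ to the already-completed computation for $\lambda_0 \in X_{r/2}(T)$ from Theorem \ref{Br2F4}. The main tool is the unique decomposition $\lambda = \lambda_0 + \tau^r \lambda_1$ with $\lambda_0 \in X_{r/2}(T)$ and $\lambda_1 \in X(T)$, together with the resulting isomorphism
\begin{displaymath}
\opH^1(B_{r/2}, \lambda) \cong \opH^1(B_{r/2}, \lambda_0) \otimes k_{\lambda_1}^{(r/2)},
\end{displaymath}
valid because $B_{r/2}$ acts trivially on the twisted factor.

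First I would invoke Corollary \ref{F4weightform}: the non-vanishing weights are precisely those of the form $\lambda = \tau^r \omega - \tau^i \alpha$ with $\alpha \in \Pi$, $0 \leq i \leq 2s-1$, or $\lambda = \tau^r \omega - \beta$ with $\beta \in \Pi_s$. Letting $(\zeta, j)$ denote the appropriate pair (so $(\alpha,i)$ or $(\beta, 1)$), for each such $\lambda$ I match $\zeta$ and $j$ to exactly one row of Theorem \ref{Br2F4}, which fixes a specific $\omega' \in X(T)$ making $\lambda_0 = \tau^r \omega' - \tau^j \zeta \in X_{r/2}(T)$. Then $\lambda_1 = \omega - \omega'$ and substituting into the displayed isomorphism recovers the result.

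The substitution itself is straightforward: in every one-dimensional case in Theorem \ref{Br2F4} the answer is $k_{\omega'}^{(r/2)}$, and tensoring with $k_{\omega-\omega'}^{(r/2)}$ yields $k_\omega^{(r/2)}$, giving the first, second and fourth rows of the target statement. In the single exceptional case $\zeta = \alpha_2$, $j = 2s-1$, the answer in Theorem \ref{Br2F4} is $M_{F_4}^{(r/2)}$ with $\omega' = \omega_2 - \omega_3$, so tensoring with $k_{\omega - (\omega_2-\omega_3)}^{(r/2)} = k_{\omega+\omega_3-\omega_2}^{(r/2)}$ gives precisely the third row.

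The only real obstacle is administrative: one must check that the cases $(\zeta, j)$ enumerated by Corollary \ref{F4weightform} are in bijection with the non-zero rows of Theorem \ref{Br2F4} (collapsing the four $\beta_i$ rows into a single $\beta \in \Pi_s$ row, and grouping the $j = 2s-1$ cases $\alpha \in \{\alpha_1, \alpha_3, \alpha_4\}$ separately from $\alpha = \alpha_2$), and that for each pair $(\zeta, j)$ the corresponding $\omega'$ read off from Theorem \ref{Br2F4} is indeed independent of the shift by $\tau^r \lambda_1$, so that the non-vanishing depends only on the form $\tau^r \omega - \tau^j \zeta$ up to an arbitrary $\omega \in X(T)$. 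This is already the content of Corollary \ref{F4weightform}, so once the bookkeeping is done the theorem follows by direct substitution, and no further spectral sequence input is required.
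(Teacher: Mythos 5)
Your proposal is correct and follows essentially the same route as the paper: decompose $\lambda=\lambda_0+\tau^r\lambda_1$, invoke Corollary \ref{F4weightform} to restrict to weights of the form $\tau^r\omega-\tau^j\zeta$, and tensor the answers of Theorem \ref{Br2F4} by $k_{\omega-\omega'}^{(r/2)}$, including the correct handling of the exceptional case $\zeta=\alpha_2$, $j=2s-1$ where $\omega'=\omega_2-\omega_3$. No gaps.
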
 
\subsubsection*{$G_{r/2}$-cohomology of induced modules}

Using Kempf's vanishing theorem, Theorem \ref{btau-f4}, Theorem \ref{Br2F4} and (\ref{indBG}), we compute $\opH^{1}(G_{r/2}, \opH^{0}(\lambda))$ for $\lambda \in X_{r/2}.$ 
Finally, we note that, by \cite[3.1, Theorem (C)]{BNP04b}, $\Ind_B^G(M_{F_{4}})= \opH^0(\omega_4)$.

First, suppose $r=1$. 
\begin{theorem} \label{gtau-f4}
Let $\lambda \in X_{\tau}(T)$. Then
\begin{equation*}\opH^{1}(G_{\tau}, \opH^0(\lambda))^{(-\tau)} \cong\left\{\begin{array}{ll}
\opH^0(\omega_{4}) & \text { if } \lambda= \omega_4 \\
\opH^0(\omega_{1}) & \text { if } \lambda=\omega_3 \\
0 & \text { else. }
\end{array}\right.\end{equation*}
\end{theorem}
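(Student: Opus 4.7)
The plan is to apply the isomorphism (\ref{indBG}) in the case $r=1$, so that $r/2=\tau$, giving
\[
\opH^{1}(G_{\tau}, \opH^{0}(\lambda))^{(-\tau)} \;\cong\; \Ind_{B}^{G}\!\left(\opH^{1}(B_{\tau}, \lambda)^{(-\tau)}\right),
\]
for any $\lambda \in X(T)_{+}$. Since every $\lambda \in X_{\tau}(T)$ is dominant, this applies to all the cases we need. We then substitute the explicit description of $\opH^{1}(B_{\tau},\lambda_{0})$ from Theorem \ref{btau-f4} and compute the induced module on each summand.

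First, for $\lambda = \omega_{4}$, Theorem \ref{btau-f4} gives
$\opH^{1}(B_{\tau},\omega_{4})^{(-\tau)} \cong k_{\omega_{4}} \oplus k_{\omega_{2}-\omega_{3}} \oplus k_{\omega_{3}-\omega_{4}}$
as a $B$-module. Since $\Ind_{B}^{G}$ commutes with finite direct sums, I apply it to each weight space separately: the weight $\omega_{4}$ is dominant, yielding $\opH^{0}(\omega_{4})$, whereas $\omega_{2}-\omega_{3}$ fails dominance at $\alpha_{3}^{\vee}$ and $\omega_{3}-\omega_{4}$ fails dominance at $\alpha_{4}^{\vee}$, so both induced modules vanish. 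Analogously, for $\lambda = \omega_{3}$, only the single summand $k_{\omega_{1}}$ survives, giving $\opH^{0}(\omega_{1})$. For the remaining $\tau$-restricted weights $\lambda = 0$ and $\lambda = \omega_{3}+\omega_{4}$, the input $\opH^{1}(B_{\tau},\lambda)$ vanishes, so there is nothing to compute.

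The only non-routine point is confirming that $\opH^{0}(\omega_{2}-\omega_{3})=0=\opH^{0}(\omega_{3}-\omega_{4})$, which follows from the standard fact that $\Ind_{B}^{G}\mu$ is non-zero if and only if $\mu$ is dominant (one of the two equivalent formulations of Kempf's vanishing theorem together with the dominant case). I would briefly record the values $\langle \omega_{2}-\omega_{3},\alpha_{3}^{\vee}\rangle = -1$ and $\langle \omega_{3}-\omega_{4},\alpha_{4}^{\vee}\rangle = -1$ to justify non-dominance. No spectral sequence analysis is required here, unlike in the subsequent $r \geq 3$ cases, so I do not anticipate any genuine obstacle; the statement is essentially a transcription of Theorem \ref{btau-f4} through $\Ind_{B}^{G}$.
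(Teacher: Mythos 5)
Your proposal is correct and is essentially the paper's own (implicit) argument: the paper proves Theorem \ref{gtau-f4} by exactly this route, combining the isomorphism (\ref{indBG}) in the case $r=1$ with the $B_{\tau}$-cohomology of Theorem \ref{btau-f4} and the vanishing of $\Ind_B^G\mu$ for non-dominant $\mu$. Your explicit verification that $\langle \omega_2-\omega_3,\alpha_3^{\vee}\rangle=\langle\omega_3-\omega_4,\alpha_4^{\vee}\rangle=-1$ supplies the only detail the paper leaves unstated.
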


Now, let $r>1$.
\begin{theorem}\label{reegr2}
	Let $\lambda \in X_{r/2}(T)$ and $0 \leq i \leq s-2$. Then
	\begin{equation*}\opH^{1}(G_{r/2}, \opH^{0}(\lambda))^{(-r/2)} \cong\left\{\begin{array}{ll}
	\opH^0(\omega_1)  & \text { if } \lambda =\omega_3+2(2^s-1)\omega_4=\tau^r \omega_1-\beta_2  \\
	 \opH^0(\omega_2)  & \text { if } \lambda =\omega_4+\omega_2+2(2^s-1)\omega_3=\tau^r \omega_2-\beta_1\\
	 \opH^0(\omega_3)  & \text { if } \lambda =\omega_4+\omega_1+(2^s-1)\omega_2=\tau^r \omega_3-\beta_3\\
	 \opH^0(\omega_4)  & \text { if } \lambda =\omega_4+(2^s-1) \omega_1=\tau^r \omega_4-\beta_4\\
    \opH^{0}(\omega_1)  & \text { if } \lambda=2^{s} \omega_3=\tau^r \omega_1-\tau^{2s-1}\alpha_1 \\
	\opH^{0}(\omega_3)  & \text { if } \lambda=2^{s}\omega_3 +2^{s-1}\omega_{1}=\tau^r \omega_3-\tau^{2s-1}\alpha_3\\
	\opH^{0}(\omega_4)  & \text { if } \lambda= 2^{s-1}\omega_2=\tau^r \omega_4-\tau^{2s-1}\alpha_4\\
	\opH^{0}(\omega_4)  & \text { if }  \lambda=2^{s} \omega_4=\tau^r(\omega_2-\omega_3)-\tau^{2s-1}\alpha_2 \\
	\opH^{0}(\omega_1)  & \text { if } \lambda=(2^{s+1}-2^{i+2}) \omega_4 +2^{i+1}\omega_3=\tau^r \omega_1-\tau^{2i+1}\alpha_1 \\
	\opH^{0}(\omega_2)  & \text { if }  \lambda=2^{i+1}\omega_2 +(2^{s+1}-2^{i+2})\omega_3 + 2^{i+1}\omega_4\\
	&\quad\quad =\tau^r \omega_2-\tau^{2i+1}\alpha_2\\
	\opH^{0}(\omega_3)  & \text { if } \lambda=2^{i}\omega_1 +(2^{s}-2^{i+1}) \omega_2 + 2^{i+1}\omega_3 \\
	&\quad\quad =\tau^r \omega_3-\tau^{2i+1}\alpha_3\\
	\opH^{0}(\omega_4)  & \text { if } \lambda=(2^{s}-2^{i+1})\omega_1 +2^{i}\omega_2=\tau^r \omega_4-\tau^{2i+1}\alpha_4, \\
	0 & \text { else. }
	\end{array}\right.\end{equation*} 
\end{theorem}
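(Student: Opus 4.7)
The strategy is to apply the isomorphism (\ref{indBG}), which gives
\[
\opH^{1}(G_{r/2}, \opH^{0}(\lambda))^{(-r/2)} \cong \Ind_{B}^{G}\!\left(\opH^{1}(B_{r/2}, \lambda)^{(-r/2)}\right),
\]
and simply read off the result from the computation of $\opH^{1}(B_{r/2},\lambda)$ already carried out in Theorem~\ref{Br2F4}. Since Theorem~\ref{reegr2} restricts attention to $\lambda \in X_{r/2}(T)$ (so that the list of possible $\lambda$ and the corresponding $B_{r/2}$-cohomology groups is precisely the one displayed in Theorem~\ref{Br2F4}), the proof will amount to a case-by-case translation.

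First I would handle all the cases in which $\opH^{1}(B_{r/2},\lambda)$ is a twist of a one-dimensional $B$-module $k_{\omega_j}^{(r/2)}$ for some $j \in \{1,2,3,4\}$. For each of the twelve such entries in Theorem~\ref{Br2F4}, the untwist is $k_{\omega_j}$, and since $\omega_j$ is dominant, $\Ind_{B}^{G}(k_{\omega_j}) = \opH^{0}(\omega_j)$ by definition. This produces all but one of the rows of the table in Theorem~\ref{reegr2}, matching the weight $\omega_j$ to the corresponding $\lambda$ verbatim from Theorem~\ref{Br2F4}.

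The only case requiring additional input is $\lambda = 2^s \omega_4$, where Theorem~\ref{Br2F4} gives $\opH^{1}(B_{r/2},\lambda) \cong M_{F_4}^{(r/2)}$. For this I would invoke \cite[3.1, Theorem (C)]{BNP04b}, which (as recalled just before the statement) asserts $\Ind_{B}^{G}(M_{F_4}) = \opH^{0}(\omega_4)$. This yields the entry with $\opH^{0}(\omega_4)$ for $\lambda = 2^s\omega_4$, completing the non-zero rows.

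Finally, if $\opH^{1}(B_{r/2},\lambda) = 0$ then clearly $\Ind_{B}^{G}$ of it is zero, covering the ``else'' case. No genuine obstacle arises: the whole content of the proof is encapsulated in Theorem~\ref{Br2F4} together with the identification (\ref{indBG}) and the one auxiliary fact about $\Ind_{B}^{G}(M_{F_4})$. The only bookkeeping to watch is that the two equivalent forms of each $\lambda$ displayed in Theorem~\ref{Br2F4} (one in fundamental weight coordinates, one as $\tau^r\omega - \tau^i\alpha$ or $\tau^r\omega - \beta$) are carried over unchanged into the conclusion.
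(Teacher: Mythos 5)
Your proposal is correct and is exactly the route the paper takes: Theorem \ref{reegr2} is obtained by applying the isomorphism (\ref{indBG}) to the list in Theorem \ref{Br2F4}, inducing each $k_{\omega_j}$ to $\opH^0(\omega_j)$ and using $\Ind_B^G(M_{F_4})=\opH^0(\omega_4)$ from \cite[3.1, Theorem (C)]{BNP04b} for the single case $\lambda=2^s\omega_4$. (The only quibble is a harmless miscount: eleven, not twelve, of the non-zero rows of Theorem \ref{Br2F4} are one-dimensional.)
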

One can use Theorem \ref{Br2F4gen} to determine $\opH^1(G_{r/2}, \opH^0(\lambda))$ in terms of induced modules for all dominant weights $\lambda$, by applying the induction functor $\Ind_B^G$. The remark below deals with the only non-obvious case. 
\begin{remark}
    Let $\tau^r \omega - \tau^{2s-1} \alpha_2 \in X(T)_+$. Then $\langle \omega, \alpha_1^{\vee} \rangle \geq 0$, $\langle \omega, \alpha_2^{\vee} \rangle \geq 1$, $\langle \omega, \alpha_3^{\vee} \rangle \geq -1$ and $\langle \omega, \alpha_4^{\vee} \rangle \geq 0$.
    
    By  \cite[Proposition 3.4 (B)(d)]{BNP04b}, $\Ind_{B}^G(M_{F_4} \otimes k_{\omega+\omega_3-\omega_2})$ has a filtration with the following factors, from top to bottom: $\opH^0(\omega+\omega_3+\omega_4-\omega_2)$, $\opH^0(\omega+2\omega_3-\omega_4-\omega_2)$ and  $\opH^0(\omega)$. Furthermore, observe that:
    \begin{itemize}
        \item[(i)] $\opH^0(\omega+\omega_3+\omega_4-\omega_2)$ is always present.
        \item[(ii)] $\opH^0(\omega+2\omega_3-\omega_4-\omega_2)$ appears as a factor if $\langle \omega, \alpha_4^{\vee} \rangle \geq 1$ and does not if $\langle \omega, \alpha_4^{\vee} \rangle = 0$.
        \item[(iii)] $\opH^0(\omega)$ is present if $\langle \omega, \alpha_3^{\vee} \rangle \geq 0$ and is not present if $\langle \omega, \alpha_3^{\vee} \rangle = -1$.
    \end{itemize}
\end{remark}
\subsubsection*{$G_{r/2}$-cohomology with coefficients in simple modules} In this subsection, we make use of the  $G_1$-cohomology with coefficients in simple modules, computed in \cite[Proposition 4.11]{Sin94b}, to calculate $\opH^1(G_{s}, L(\lambda))$, for a positive integer $s$ and $\lambda \in X_{s}(T)$.
\begin{theorem} \label{f4gs-simple}
	Let $s$ be a positive integer and $\lambda \in X_s(T)$, for $0 \leq i \leq s-2$. Then
	\begin{equation*}\opH^{1}(G_s, L(\lambda))^{(-s)} \cong\left\{\begin{array}{ll}
	L(\omega_4)  & \text { if } \lambda=2^{s-1} \omega_1 \\
	k \oplus L(\omega_1)  & \text { if } \lambda=2^{s-1} \omega_2 \\
	k  & \text { if } \lambda=2^{s-1}(\omega_1+\omega_4) \\
	k \oplus L(\omega_4)  & \text { if } \lambda=2^{s-1} (\omega_2+\omega_3) \\
	k  & \text { if } \lambda=2^{i}(\omega_1+2\omega_4) \\
	k  & \text { if } \lambda=2^{i}\omega_2 \\
	k  & \text { if } \lambda=2^{i}(\omega_2+2\omega_1) \\
	k  & \text { if } \lambda=2^{i}(\omega_1+\omega_4) \\
	k  & \text { if } \lambda=2^{i}(\omega_2+\omega_3) \\
    k  & \text { if } \lambda=2^{i}(\omega_2+\omega_3+2\omega_4)  \\
	0 & \text { else. }
	\end{array}\right.\end{equation*} 
	Note that it is implicit in the statement of the theorem that $s \geq 1$ or $s \geq 2$, depending on the case. 
\end{theorem}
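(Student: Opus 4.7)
The plan is to proceed by induction on $s$, following the template used in Theorems \ref{c2gs-simple} and \ref{g2gs-simple}. The base case $s=1$ is precisely \cite[Proposition 4.11]{Sin94b}. For the inductive step, write $\lambda = \lambda_0 + 2^{s-1}\lambda_1$ with $\lambda_0 \in X_{s-1}(T)$ and $\lambda_1 \in X_1(T)$, so that $L(\lambda) \cong L(\lambda_0) \otimes L(\lambda_1)^{(s-1)}$ by Steinberg's tensor product theorem. Then run the LHS spectral sequence associated to $G_{s-1} \lhd G_s$:
$$E^{i,j}_2 = \opH^i(G_1, \opH^j(G_{s-1}, L(\lambda_0))^{(-s+1)} \otimes L(\lambda_1))^{(s-1)} \Rightarrow \opH^{i+j}(G_s, L(\lambda)),$$
and identify $\opH^1(G_s, L(\lambda))$ through the five-term exact sequence attached to it.

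For the $E^{1,0}$ contribution, this is non-zero only when $\lambda_0 = 0$, in which case it reduces to $\opH^1(G_1, L(\lambda_1))^{(s-1)}$, a group directly read off from the base case. This accounts precisely for the $\lambda = 2^{s-1}\lambda_1$ entries in the statement corresponding to Sin's $G_1$-cohomology weights. For the $E^{0,1}$ contribution, compute $\Hom_{G_1}(L(\lambda_1), \opH^1(G_{s-1}, L(\lambda_0))^{(-s+1)})^{(s-1)}$ by running through each non-zero value of $\opH^1(G_{s-1}, L(\lambda_0))$ provided by the inductive hypothesis, and determining which $\lambda_1 \in X_1(T)$ yields a non-zero $G_1$-Hom into that module (equivalently, which simple $G_1$-composition factor matches $L(\lambda_1)$). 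Reassembling $\lambda = \lambda_0 + 2^{s-1}\lambda_1$ then produces the remaining entries of the list.

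The main obstacle is ensuring $E^1$ collapses onto one of $E^{1,0}$ or $E^{0,1}$. First, I would verify by direct inspection of the weight lists that no $\lambda$ simultaneously makes both terms non-zero; thus whenever $E^{1,0}\neq 0$ we immediately get $E^1 \cong E^{1,0}$. When $E^{0,1}\neq 0$ and $E^{1,0}=0$, we must further check that the differential $d_2 : E^{0,1} \to E^{2,0}$ vanishes, where
$$E^{2,0} = \opH^2(G_1, \Hom_{G_{s-1}}(k, L(\lambda_0))^{(-s+1)} \otimes L(\lambda_1))^{(s-1)}.$$
For every $\lambda$ in the $E^{0,1}$ list with $\lambda_0 \neq 0$, $\Hom_{G_{s-1}}(k, L(\lambda_0))$ vanishes and so $E^{2,0}=0$ automatically.

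The delicate cases are those in which $\lambda_0 = 0$, so that $E^{2,0} = \opH^2(G_1, L(\lambda_1))^{(s-1)}$ for some $\lambda_1 \in X_1$ with $\opH^1(G_{s-1}, k)$ containing $L(\lambda_1)$ as a $G_1$-composition factor; inspection of the inductive hypothesis shows the relevant $\lambda_1$ lie in $\{\omega_1, \omega_4\}$. To handle these, I would establish auxiliary lemmas showing $\opH^2(G_1, L(\omega_1)) = 0$ and $\opH^2(G_1, L(\omega_4)) = 0$, in direct analogy with Lemma \ref{h2g1}. Each is proved by running the LHS spectral sequence for $G_\tau \lhd G_1$ on the module in question and exploiting (i) the $G_\tau$-injectivity of $L(\omega_4)$ (so that $\opH^{>0}(G_\tau, L(\omega_4))=0$), and (ii) Theorem \ref{gtau-f4}, which computes the $G_\tau$-cohomology of the relevant induced modules and thereby controls $\opH^{\bullet}(G_\tau, L(\omega_1))$ in low degree. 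With these vanishing results in hand, $d_2$ is forced to be zero and we conclude $E^1 \cong E^{0,1}$ in the remaining cases, completing the inductive step.
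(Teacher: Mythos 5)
Your proposal follows the paper's proof almost verbatim: the same induction on $s$ with base case \cite[Proposition 4.11]{Sin94b}, the same LHS spectral sequence for $G_{s-1}\lhd G_s$, and the same identification of $E^1$ with $E^{1,0}$ or $E^{0,1}$ after checking the two terms are never simultaneously non-zero. The one place you diverge is in the collapse argument for the $E^{0,1}$ cases: you anticipate ``delicate cases'' with $\lambda_0=0$ requiring auxiliary vanishing lemmas $\opH^2(G_1,L(\omega_1))=0$ and $\opH^2(G_1,L(\omega_4))=0$ in analogy with Lemma \ref{h2g1}. But for $F_4$ in characteristic $2$ the trivial weight does not appear in the non-vanishing list (unlike type $C_2$, where $\opH^1(G_s,k)^{(-s)}\cong L(\omega_1)$), so $\opH^1(G_{s-1},k)=0$ and the case $\lambda_0=0$ contributes nothing to $E^{0,1}$; your claim that ``the relevant $\lambda_1$ lie in $\{\omega_1,\omega_4\}$'' is a slip carried over from the $C_2$ pattern. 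The paper simply notes that every $\lambda$ with $E^{0,1}\neq 0$ has $\lambda_0\neq 0$, whence $\Hom_{G_{s-1}}(k,L(\lambda_0))=0$ and $E^{2,0}=0$ automatically. Your extra lemmas are therefore unnecessary (and would cost you a nontrivial computation you have not actually verified), but their absence does not create a gap: a careful inspection at that step of your own argument would reveal there is nothing to check.
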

\begin{proof}
We proceed inductively. When $s=1$, we refer the reader to \cite[Proposition 4.11]{Sin94b}.
We write $\lambda=\lambda_0+2^{s-1}\lambda_1$, for $\lambda_0 \in X_{s-1}$ and $\lambda_1 \in X_1.$ Suppose $s>1$ and consider the LHS spectral sequence corresponding to $G_{s-1} \lhd G_s$. The $E_2$-page is given by
\begin{displaymath}
E^{i,j}_2:=\opH^i(G_1, \opH^j(G_{s-1}, L(\lambda_0))^{(-s+1)} \otimes L(\lambda_1))^{(s-1)}.
\end{displaymath}
First, consider the $E^{1,0}$-term. We have
\begin{displaymath}
E^{1,0}_2=\opH^1(G_1, \Hom_{G_{s-1}}(k, L(\lambda_0))^{(-s+1)} \otimes L(\lambda_1))^{(s-1)}.
\end{displaymath}
Note that $E^{1,0} \neq 0$ if and only if $\lambda_0=0$, in which case we obtain 
\begin{equation*}E^{1,0}=\opH^{1}(G_1, L(\lambda_1))^{(s-1)} \cong\left\{\begin{array}{ll}
	L(\omega_4)^{(s)} & \text { if } \lambda_1=\omega_1 \\
	k \oplus L(\omega_1)^{(s)} & \text { if } \lambda_1=\omega_2 \\
	k  & \text { if } \lambda_1=\omega_1+\omega_4 \\
	k \oplus L(\omega_4)^{(s)} & \text { if } \lambda_1=\omega_2+\omega_3 \\
	0 & \text { else. }
	\end{array}\right.\end{equation*} 
(cf. \cite[Proposition 4.11]{Sin94b}). Therefore, recalling that $\lambda=\lambda_0+2^{s-1}\lambda_1$, we may conclude that for $\lambda \in X_s$,
\begin{equation*}E^{1,0}\cong\left\{\begin{array}{ll}
	L(\omega_4)^{(s)} & \text { if } \lambda=2^{s-1}\omega_1 \\
	k \oplus L(\omega_1)^{(s)} & \text { if } \lambda=2^{s-1}\omega_2 \\
	k  & \text { if } \lambda=2^{s-1}(\omega_1+\omega_4) \\
	k \oplus L(\omega_4)^{(s)} & \text { if } \lambda=2^{s-1}(\omega_2+\omega_3) \\
	0 & \text { else. }
	\end{array}\right.\end{equation*}
Now consider the $E^{0,1}$-term. We have
\begin{displaymath}
E^{0,1}=\Hom_{G_1}(L(\lambda_1), \opH^1(G_{s-1}, L(\lambda_0))^{(-s+1)})^{(s-1)}.
\end{displaymath}
We take each non-zero instance of $\opH^1(G_{s-1}, L(\lambda_0))$ in turn. For instance, by the induction hypothesis, if $\lambda_0=2^{s-2}\omega_2$, then $E^{0,1}=\Hom_{G_1}( L(\lambda_1), k \oplus L(\omega_1))^{(s-1)}$. Thus $E^{0,1} \neq 0$ if $\lambda_1=\omega_0$ or $\lambda_1=\omega_4$; hence $E^{0,1} \cong k$ for $\lambda=2^{s-2}(\omega_2+\omega_3)$ or $\lambda=2^{s-2}(\omega_2+\omega_3+2\omega_4)$, respectively. The other cases follow similarly and we obtain, for $0 \leq i \leq s-2$
\begin{equation*}E^{0,1}\cong\left\{\begin{array}{ll}
	k & \text { if } \lambda=2^{i}(\omega_1+2\omega_4) \\
	k & \text { if } \lambda=2^{i}\omega_2 \\
	k & \text { if } \lambda=2^{i}(\omega_1+2\omega_1) \\
	k & \text { if } \lambda=2^{i}(\omega_1+\omega_4) \\
	k & \text { if } \lambda=2^{i}(\omega_2+\omega_3) \\
	k  & \text { if } \lambda=2^{i}(\omega_2+\omega_3+2\omega_4)\\
	0 & \text { else. }
	\end{array}\right.\end{equation*}
Based on the discussion above, notice that there is no choice of $\lambda$ for which $E^{1,0}$ and $E^{0,1}$ are both non-zero. Hence if $E^{1,0} \neq 0$, then $E^{0,1}=0$, implying that $E^{1} \cong E^{1,0}$. Alternatively, suppose that $E^{0,1} \neq 0$. Then observe that for all of the choices of $\lambda$ above, $\lambda_0 \neq 0$, pushing $\Hom_{G_{s-1}}(k, L(\lambda_0))=0$. Therefore $E^{0,1} \neq 0$ implies $E^{1,0}=E^{2,0}=0$, meaning that $E^1 \cong E^{0,1}$. 
\end{proof}
Next, with the aid of the previous theorem concerning the cohomology for classical Frobenius kernels, we compute $\opH^{1}(G_{r/2}, L(\lambda))$ for $r$ an odd positive integer and $\lambda \in X_{r/2}$.

If $r=1$, we refer the reader to \cite[Lemma 4.5(a) and 4.6]{Sin94b}. Otherwise, we get:
\begin{theorem} \label{f4gr2-simple}
	Suppose $r=2s+1>1$ and let $\lambda \in X_{r/2}(T)$, $0 \leq i \leq s-1$ and $0 \leq j \leq s-2$. Then
    \begin{equation*}\opH^{1}(G_{r/2}, L(\lambda))^{(-r/2)} \cong\left\{\begin{array}{ll}
	L(\omega_4)  & \text { if } \lambda=2^{s} \omega_4 \\
	k \oplus L(\omega_1)  & \text { if } \lambda=2^{s} \omega_3 \\
	k  & \text { if } \lambda=2^{i}(\omega_1+2\omega_4) \\
	k  & \text { if } \lambda=2^{i} \omega_2\\
	k  & \text { if } \lambda=2^{i}(\omega_1+\omega_4) \\
	k  & \text { if } \lambda=2^{i}(\omega_2+\omega_3) \\
    k  & \text { if } \lambda=2^{i}(\omega_2+\omega_3+2\omega_4)  \\
    k  & \text { if } \lambda=2^{j}(\omega_2+2\omega_1) \\
	0 & \text { else. }
	\end{array}\right.\end{equation*}
\end{theorem}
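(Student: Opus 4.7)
The plan is to mimic the proofs of Theorem \ref{c2gr2-simple} and Theorem \ref{g2gr2-simple} by running the LHS spectral sequence for $G_{s} \lhd G_{r/2}$, whose quotient is isomorphic to $G_{\tau}$ (suitably twisted). First, for $\lambda \in X_{r/2}(T)$ write uniquely $\lambda=\lambda_0+2^s\lambda_1$ with $\lambda_0 \in X_s(T)$ and $\lambda_1 \in X_{\tau}(T)$; by the tensor product theorem with $\tau$ we have $L(\lambda)\cong L(\lambda_0)\otimes L(\lambda_1)^{(s)}$. This feeds into the spectral sequence
\begin{displaymath}
E^{i,j}_2 = \opH^i\!\bigl(G_{\tau},\, \opH^j(G_{s}, L(\lambda_0))^{(-s)} \otimes L(\lambda_1)\bigr)^{(s)} \Rightarrow \opH^{i+j}(G_{r/2},L(\lambda)),
\end{displaymath}
and we identify $\opH^1(G_{r/2},L(\lambda))$ via the five-term exact sequence, as in the previous subsections.

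Next, I would compute $E^{1,0}$. This term vanishes unless $\Hom_{G_s}(k,L(\lambda_0))\neq 0$, forcing $\lambda_0=0$ and hence $\lambda=2^s\lambda_1$ with $\lambda_1\in\{0,\omega_3,\omega_4,\omega_3+\omega_4\}$. Then $E^{1,0}\cong \opH^1(G_{\tau},L(\lambda_1))^{(s)}$. These four cohomology groups are read off by combining Theorem \ref{gtau-f4} with the $G_{\tau}$-cohomology of the $\tau$-restricted simples recorded in \cite[Lemma~4.5(a), Lemma~4.6]{Sin94b}. The only non-zero contributions occur at $\lambda=2^s\omega_4$, giving $L(\omega_4)^{(r/2)}$, and at $\lambda=2^s\omega_3$, giving $(k\oplus L(\omega_1))^{(r/2)}$.

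Next I would compute $E^{0,1}=\Hom_{G_{\tau}}\!\bigl(L(\lambda_1),\, \opH^1(G_s,L(\lambda_0))^{(-s)}\bigr)^{(s)}$. Since $\opH^1(G_s,k)=0$ by Theorem \ref{f4gs-simple}, this already forces $\lambda_0\neq 0$. I would then run through each non-vanishing $\opH^1(G_s,L(\lambda_0))$ listed in Theorem \ref{f4gs-simple} and, for each one, test which $\lambda_1\in X_{\tau}$ admits a non-zero homomorphism into it. For instance, $\lambda_0=2^{s-1}\omega_1$ yields $L(\omega_4)^{(s)}$, pairing with $\lambda_1=\omega_4$ to produce the case $\lambda=2^{s-1}(\omega_1+2\omega_4)$ with $E^{0,1}\cong k$; the trivial-cohomology contributions only pair with $\lambda_1=0$, producing the cases $\lambda=2^i\lambda_0$ with $\lambda_0$ one of the listed weights in $X_s$. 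Substituting and collecting gives exactly the remaining entries in the table.

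Finally, I would glue the two halves together. From Steps 1 and 2 one sees that $E^{1,0}$ and $E^{0,1}$ are never simultaneously non-zero: whenever $E^{0,1}\neq 0$ one has $\lambda_0\neq 0$, whereas $E^{1,0}\neq 0$ forces $\lambda_0=0$. In the first situation $E^1\cong E^{1,0}$ is immediate; in the second, it remains to kill the differential $d_2\colon E^{0,1}\to E^{2,0}$, and this is automatic because $\Hom_{G_s}(k,L(\lambda_0))=0$ for $\lambda_0\neq 0$, making $E^{2,0}=\opH^2(G_{\tau},\Hom_{G_s}(k,L(\lambda_0))^{(-s)}\otimes L(\lambda_1))^{(s)}=0$. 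The principal obstacle to watch is the $E^{1,0}$-step: unlike the $C_2$ and $G_2$ settings, for $F_4$ in characteristic $2$ the $G_{\tau}$-cohomology of the non-trivial $\tau$-restricted simples $L(\omega_3)$ and $L(\omega_4)$ is itself a non-trivial rational $G$-module, and one must invoke Sin's calculations for these rather than an injectivity argument.
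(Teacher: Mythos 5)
Your proposal is correct and follows essentially the same route as the paper: the decomposition $\lambda=\lambda_0+2^s\lambda_1$, the LHS spectral sequence for $G_s\lhd G_{r/2}$, the identification of $E^{1,0}$ via Theorem \ref{gtau-f4} together with Sin's computations of $\opH^1(G_\tau,L(\omega_3))$ and $\opH^1(G_\tau,L(\omega_4))$, the case-by-case pairing of $\lambda_1\in X_\tau$ against the entries of Theorem \ref{f4gs-simple} for $E^{0,1}$, and the observation that $E^{2,0}=0$ whenever $E^{0,1}\neq 0$ because $\Hom_{G_s}(k,L(\lambda_0))=0$ for $\lambda_0\neq 0$. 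The only thing worth tightening is the remark that "the trivial-cohomology contributions only pair with $\lambda_1=0$": the case $\lambda_0=2^{s-1}(\omega_2+\omega_3)$, where $\opH^1(G_s,L(\lambda_0))^{(-s)}\cong k\oplus L(\omega_4)$, also pairs with $\lambda_1=\omega_4$ and is needed to extend the range of $\lambda=2^i(\omega_2+\omega_3+2\omega_4)$ to $i=s-1$.
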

\begin{proof}
For $\lambda \in X_{r/2}$, write $\lambda=\lambda_0+2^s\lambda_1$, for $\lambda_0 \in X_s$ and $\lambda_1 \in X_{\tau}$. Consider the LHS spectral sequence corresponding to $G_{s} \lhd G_{r/2}$. The $E_2$-page is given by
\begin{displaymath}
E^{i,j}_2:=\opH^i(G_{\tau}, \opH^j(G_{s}, L(\lambda_0))^{(-s)} \otimes L(\lambda_1))^{(s)}.
\end{displaymath}
First, consider the $E^{1,0}$-term. We have
\begin{displaymath}
E^{1,0}=\opH^1(G_{\tau}, \Hom_{G_{s}}(k, L(\lambda_0))^{(-s)} \otimes L(\lambda_1))^{(s)}.
\end{displaymath}
Note that $E^{1,0} \neq 0$ if and only if $\lambda_0=0$, in which case we obtain 
\begin{equation*}E^{1,0}=\opH^{1}(G_{\tau}, L(\lambda_1))^{(s)} \cong\left\{\begin{array}{ll}
	L(\omega_4)^{(r/2)} & \text { if } \lambda_1=\omega_4\\
	k \oplus L(\omega_1)^{(r/2)} & \text { if } \lambda_1=\omega_3\\
	0  & \text { else. } 
	\end{array}\right.\end{equation*} 
(cf. Theorem \ref{gtau-f4} and \cite[Lemma 4.5(a) and 4.6]{Sin94b}). Next, consider the $E^{0,1}$-term:
\begin{displaymath}
E^{0,1}=\Hom_{G_{\tau}}(L(\lambda_1), \opH^1(G_{s}, L(\lambda_0))^{(-s)})^{(s)}.
\end{displaymath}
We take each non-zero instance of $\opH^1(G_{s}, L(\lambda_0))^{(-s)}$ from Theorem \ref{g2gs-simple} in turn. For example, if $\lambda_0=2^{s-1}\omega_2$, then 
$E^{0,1}=\Hom_{G_{\tau}}(L(\lambda_1), k \oplus    L(\omega_1))^{(s)}$. Thus $E^{0,1} \neq 0$ if and only if $\lambda_1=0$, since $\omega_1 \notin X_{\tau}$. We obtain $E^{0,1} \cong k$ for $\lambda=2^{s-1}\omega_2$. The other cases are similar. We get, for $0 \leq i \leq s-1$ and $0 \leq j \leq s-2$,
\begin{equation*}E^{0,1}\cong\left\{\begin{array}{ll}
    k  & \text { if } \lambda=2^{i}(\omega_1+2\omega_4) \\
	k  & \text { if } \lambda=2^{i} \omega_2\\
	k  & \text { if } \lambda=2^{i}(\omega_1+\omega_4) \\
	k  & \text { if } \lambda=2^{i}(\omega_2+\omega_3) \\
    k  & \text { if } \lambda=2^{i}(\omega_2+\omega_3+2\omega_4)  \\
    k  & \text { if } \lambda=2^{j}(\omega_2+2\omega_1) \\
	0 & \text { else. }
	\end{array}\right.\end{equation*}
In light of the discussion above, notice that there is no choice of $\lambda$ for which $E^{1,0}$ and $E^{0,1}$ are both non-zero. Hence if $E^{1,0} \neq 0$, then $E^{0,1}=0$, implying that $E^{1} \cong E^{1,0}$. Alternatively, suppose that $E^{0,1} \neq 0$ and notice that for each choice of $\lambda$ above, we obtain $\Hom_{G_s}(k,L(\lambda_0))=0$. Hence $E^{1,0}=E^{2,0}=0$, meaning that $E^1 \cong E^{0,1}$. 
\end{proof}

\section{Bounding cohomology for the Ree groups of type $F_4$} \label{Sect4}

In this section we turn our attention to the extensions between simple modules for the Ree groups of type $F_4$, for which we aim to prove results using the \cite{BNP06} approach. 

To begin with, we briefly discuss the motivation behind the \cite{BNP06} framework. Their method relies on the use of a certain truncated category of $G$-modules. In such a category, the weights of the $G$-modules have a suitable upper bound, and it is highest weight category (see \cite[Definition 3.1]{CPS3} for a definition). Moreover, the key fact is that this truncated category contains enough projective modules (we refer the reader to \cite[4.2 and 4.5]{BNP01}, or \cite[\S 1]{Don86}, for a more general treatment of truncated categories). This enables us to use various LHS spectral sequences in order to link it to related module categories, for the Frobenius kernels and finite group. 

In Subsection \ref{subsec41}, we provide precise definitions and results on which we base our construction.

\subsection{Filtering $\Ind_{G(\sigma)}^G k$} \label{subsec41}
We begin by fixing some notation and introducing some terminology. For the trivial module $k$, set $\mathcal{G}(k):= \Ind_{G(\sigma)}^{G} k$; it is an infinite-dimensional module since the coset space $G/G(\sigma)$ is affine. Then for any finite set of dominant weights $\pi \subseteq X(T)_{+}$, we define $\mathcal{G}_{\pi}(k)$ to be the maximal $G$-submodule of $\mathcal{G}(k)$ having composition factors with weights in $\pi$.

Now, observe the following result from \cite{BNP+12} concerning the structure of $\mathcal{G}(k)$.
\begin{theorem}{(\cite[Prop 3.1.2]{BNP+12})}\label{sections}
	The $G$-module $\mathcal{G}(k)$ has a filtration with factors of the form $\opH^0(\nu) \otimes \opH^0(\nu^{*})^{(\sigma)}$, one for each
	$\nu \in X(T)_{+}$ and occurring in an order compatible with the dominance order on $X_{+}$.
\end{theorem}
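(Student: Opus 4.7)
The plan is to realise $\mathcal{G}(k)$ as $k[G]$ equipped with a twisted $G$-action via the Lang--Steinberg theorem, and then to invoke the Donkin--Koppinen filtration of the coordinate algebra $k[G]$. The Frobenius twist in the sections will appear precisely because of the twist present in the Lang map.

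First I would set up the identification. The Lang map $L : G \to G$, $g \mapsto g\sigma(g)^{-1}$, is surjective and étale, with fibres the right cosets $gG(\sigma)$, so it descends to an isomorphism of varieties $G/G(\sigma) \xrightarrow{\sim} G$. Under $L$, the left translation action of $G$ on $G/G(\sigma)$ corresponds to the action $x \mapsto g x \sigma(g)^{-1}$ on $G$. Taking regular functions, and using $\mathcal{G}(k) = k[G]^{G(\sigma)} = k[G/G(\sigma)]$, one obtains a $G$-module isomorphism $\mathcal{G}(k) \cong k[G]$ in which the target carries the $G$-action $(g \cdot f)(x) = f(g^{-1} x \sigma(g))$. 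Equivalently, this is the pullback of the natural $(G \times G)$-action on $k[G]$, namely $(g_1, g_2) \cdot f(x) = f(g_1^{-1} x g_2)$, along the twisted diagonal $\Delta_\sigma : G \to G \times G$, $g \mapsto (g, \sigma(g))$.

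Next I would invoke the standard Donkin--Koppinen filtration of $k[G]$ as a $(G \times G)$-module, whose sections are $\opH^0(\nu) \otimes \opH^0(\nu^*)$ for $\nu \in X(T)_+$, arranged in an order compatible with the dominance order. Restricting this filtration along $\Delta_\sigma$ produces a $G$-module filtration of $\mathcal{G}(k)$ in which the section $\opH^0(\nu) \otimes \opH^0(\nu^*)$ becomes, on the first tensor factor, $\opH^0(\nu)$ with its standard $G$-action, and on the second, $\opH^0(\nu^*)$ with its action precomposed by $\sigma$; by the definition given in Subsection \ref{prelsuzree}, this is precisely $\opH^0(\nu^*)^{(\sigma)}$. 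The ordering is inherited, since $\Delta_\sigma$ alters only the $G$-action, not the underlying chain of subspaces.

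The main obstacle is the scheme-theoretic bookkeeping in the first step: one must verify that $G/G(\sigma)$ exists as a geometric quotient (so that $k[G]^{G(\sigma)}$ coincides with $k[G/G(\sigma)]$ as rational $G$-modules) and that $L$ is étale. For the latter, at the identity one computes $dL|_e = \mathrm{id} - d\sigma|_e$, and $d\sigma|_e$ is nilpotent on $\Lie(G)$ because $\sigma^2$ is a positive power of the Frobenius endomorphism, whose differential vanishes; hence $dL|_e$ is invertible, and étaleness elsewhere follows by a translation argument. Once these geometric preliminaries are in place, the remainder of the argument is formal, and in particular applies uniformly to any strict endomorphism $\sigma$ with finite fixed-point subgroup, not only the Suzuki--Ree cases of interest here.
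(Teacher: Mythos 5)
Your argument is correct and is essentially the standard proof of this result: the paper itself gives no proof, citing \cite[Prop.\ 3.1.2]{BNP+12}, and the argument there (going back to \cite{BNP01}) is exactly your combination of the Lang map identification $\mathcal{G}(k)=k[G/G(\sigma)]\cong k[G]$ with twisted action and the Donkin--Koppinen filtration of $k[G]$ as a $(G\times G)$-module restricted along the twisted diagonal. The only quibble is cosmetic: the fibres of $g\mapsto g\sigma(g)^{-1}$ are the cosets $gG(\sigma)$, which are usually called left cosets, but this does not affect the argument.
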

Since $G/G(\sigma)$ is affine, the induction functor is exact (cf. \cite[I.5.13]{Jan03}). Then, by generalised Frobenius reciprocity (cf. \cite[I.4.6]{Jan03}), there exists an isomorphism for each $n \geq 0$ and any two $G$-modules $V$, $W$:
\begin{equation} \label{functor}
\Ext^n_{G(\sigma)}(V,W) \cong \Ext^n_G(V,W \otimes \mathcal{G}(k)).
\end{equation}
In view of Theorem \ref{sections}, in order to apply (\ref{functor}) and study $ \Ext_{G(\sigma)}^{1}(L(\lambda), L(\mu))$ for $\lambda, \mu \in X_{\sigma}$, we must investigate the $\Ext$-groups 
\begin{displaymath}
\Ext^{1}_G(L(\lambda), L(\mu) \otimes \opH^{0}(\nu) \otimes \opH^0(\nu^{\ast})^{(r/2)}) \cong \Ext^{1}_G(L(\lambda) \otimes V(\nu)^{(r/2)}, L(\mu) \otimes \opH^{0}(\nu)),
\end{displaymath}
for all $\nu \neq 0$, $\nu \in X_{+}$.
First, we provide a way to identify homomorphisms over $G_{r/2}$ with homomorphisms over $G$, under a certain condition. This holds for the Suzuki groups and the Ree groups.
\begin{lemma} \label{l33}
    Let $r \in \mathbb{N}$ and set $s= \left \lfloor{r/2}\right \rfloor $.  Let $\lambda, \mu \in X_{r/2}$ and $\nu \in X_{+}$. We have:
    \begin{itemize}
        \item[(a)] If $\left\langle \nu, \alpha_{0}^{\vee} \right\rangle < p^s$, then the $G$-module $\Hom_{G_{r/2}}(L(\lambda), L(\mu) \otimes \opH^0(\nu))$ has trivial $G$-structure, meaning that it is isomorphic to $\Hom_G(L(\lambda), L(\mu) \otimes \opH^0(\nu))$. 
        \item[(b)]  If $\tau^r\theta$ is a weight of $\Hom_{G_{r/2}}(L(\lambda), L(\mu) \otimes \opH^0(\nu))$, then $\left\langle \tau^r\theta, \alpha_{0}^{\vee} \right\rangle \leq \left\langle \nu, \alpha_{0}^{\vee} \right\rangle$.
    \end{itemize}
\end{lemma}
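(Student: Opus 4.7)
The strategy is to prove (b) first and then deduce (a) via a highest-weight argument. The approach is modelled on \cite[Lemma 3.3.1]{BNP+12}, adapted to the exotic Frobenius kernel $G_{r/2}$.

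For (b), identify
\begin{displaymath}
\Hom_{G_{r/2}}(L(\lambda), L(\mu) \otimes \opH^0(\nu)) \cong (L(\lambda)^* \otimes L(\mu) \otimes \opH^0(\nu))^{G_{r/2}};
\end{displaymath}
as a $G/G_{r/2} \cong G^{(r/2)}$-module, every $T$-weight lies in $\tau^r X(T)$. The key input is an exotic analogue of \cite[II.11.2]{Jan03}: writing $\nu = \nu_0 + \tau^r \nu_1$ uniquely with $\nu_0 \in X_{r/2}$ and $\nu_1 \in X_+$, one has, for each $\gamma \in X_{r/2}$,
\begin{displaymath}
\Hom_{G_{r/2}}(L(\gamma), \opH^0(\nu))^{(-r/2)} \cong \begin{cases} \opH^0(\nu_1) & \text{if } \gamma = \nu_0, \\ 0 & \text{otherwise.} \end{cases}
\end{displaymath}
This should follow by Jantzen's classical strategy combined with the $\tau$-analog of Steinberg's tensor product theorem recalled in Subsection \ref{prelsuzree}.

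Now choose a composition series of $L(\lambda)^* \otimes L(\mu)$ as a $G_{r/2}$-module, with simple factors $L(\gamma_i)$, $\gamma_i \in X_{r/2}$. Tensoring with $\opH^0(\nu)$ and applying the left-exact functor $(-)^{G_{r/2}}$ yields a $G^{(r/2)}$-stable filtration of the Hom space whose successive subquotients embed into $\Hom_{G_{r/2}}(L(\gamma_i)^*, \opH^0(\nu))$, which by the displayed identification is $\opH^0(\nu_1)^{(r/2)}$ when $\gamma_i^* = \nu_0$ and $0$ otherwise. Hence any untwisted weight $\theta$ is a weight of $\opH^0(\nu_1)$, forcing $\theta \leq \nu_1$ in the dominance order. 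Direct inspection of the formulas for $\tau^*$ in Subsections \ref{subsecC2}, \ref{subsecG2}, \ref{subsecF4} confirms $\tau^*$ sends each simple root $\alpha_i$ to a non-negative integral combination of simple roots, so $\tau^r$ preserves dominance; hence $\tau^r\theta \leq \tau^r\nu_1 = \nu - \nu_0$. Pairing with the dominant coroot $\alpha_0^\vee$ and using $\langle \nu_0, \alpha_0^\vee \rangle \geq 0$ gives the bound $\langle \tau^r \theta, \alpha_0^\vee \rangle \leq \langle \nu, \alpha_0^\vee \rangle$.

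For (a), assume $\langle \nu, \alpha_0^\vee \rangle < p^s$. Every composition factor of the Hom module, being a $G^{(r/2)}$-module, has dominant highest weight $\tau^r \theta^+$, and by (b) satisfies $\langle \tau^r \theta^+, \alpha_0^\vee \rangle < p^s$. A direct computation using the descriptions of $\tau$ on fundamental weights shows $\langle \tau^r \omega_i, \alpha_0^\vee \rangle \geq p^{s+1}$ for every fundamental weight, in each of the three cases $(\Phi,p)=(C_2,2)$, $(G_2,3)$, $(F_4,2)$; hence $\theta^+ = 0$ and every composition factor is trivial. Since $\Ext^1_G(k,k) = 0$ for $G$ simple and simply connected, the Hom module is a direct sum of trivial $G$-modules, coinciding with $\Hom_G(L(\lambda), L(\mu) \otimes \opH^0(\nu))$.

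The main obstacle is the exotic analogue of Jantzen's description of $\opH^0(\nu)|_{G_rT}$: the formal structure of the classical argument carries over, but the bookkeeping with the $\tau$-restricted part $\nu_0$ and the factor $\nu_1$ requires care because $\tau^*$ does not simply multiply by $p$. A secondary subtlety is that although the $G_{r/2}$-composition series of $L(\lambda)^* \otimes L(\mu)$ is not $G$-stable, the induced filtration on the $G_{r/2}$-invariants is $G^{(r/2)}$-stable, which is what is needed for the argument to go through.
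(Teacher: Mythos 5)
Your proposal reaches the correct conclusions, but by a genuinely different and much heavier route than the paper, and as written it has one real dependency that is left unproved. The paper's proof of both parts is an elementary weight comparison: every composition factor of $\Hom_{G_{r/2}}(L(\lambda), L(\mu)\otimes \opH^0(\nu))$ has the form $L(\theta)^{(r/2)}$, and since $L(\lambda)$ restricts to a simple $G_{r/2}$-module, a nonzero homomorphism lying in the $\tau^r\theta$ weight space is injective and carries the highest weight vector of $L(\lambda)$ to a vector of weight $\lambda+\tau^r\theta$ in $L(\mu)\otimes\opH^0(\nu)$. Hence $\langle \lambda+\tau^r\theta,\alpha_0^\vee\rangle \leq \langle \mu+\nu,\alpha_0^\vee\rangle$, and after assuming without loss of generality that $\langle\mu,\alpha_0^\vee\rangle \leq \langle\lambda,\alpha_0^\vee\rangle$ (legitimate because $(L(\lambda)\otimes L(\mu)\otimes\opH^0(\nu))^{G_{r/2}}$ is symmetric in $\lambda$ and $\mu$, the relevant simples being self-dual here) this gives (b) at once; (a) then follows from $p^s\langle\theta,\alpha_0^\vee\rangle \leq p^s\langle\tau\theta,\alpha_0^\vee\rangle = \langle\tau^r\theta,\alpha_0^\vee\rangle \leq \langle\nu,\alpha_0^\vee\rangle < p^s$, forcing $\theta=0$.

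By contrast, you derive (b) from an exotic analogue of the $G_r$-socle description of $\opH^0(\nu)$, namely $\Hom_{G_{r/2}}(L(\gamma),\opH^0(\nu))^{(-r/2)}\cong\opH^0(\nu_1)$ for $\gamma=\nu_0$ and $0$ otherwise. That is a genuinely stronger structural statement than anything the lemma needs, and you only assert that it ``should follow'' from Jantzen's classical strategy; this is the one concrete gap, since your proof of (b) is conditional on it, whereas the comparison above makes it unnecessary. The surrounding steps are sound: your check that $\tau^*$ sends simple roots to non-negative integral combinations of simple roots (so that $\theta\leq\nu_1$ survives applying $\tau^r$ and pairing with $\alpha_0^\vee$) is correct in all three cases, and your endgame for (a) coincides with the paper's. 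Two small points. First, the filtration on $G_{r/2}$-invariants induced by a $G_{r/2}$-composition series is not $G^{(r/2)}$-stable in general (the terms of the composition series are not $G$-stable, so neither are their invariants); choosing a $G_{r/2}T$-composition series makes the induced filtration $T$-stable, which is all that is needed to bound weights. Second, your inequality $\langle\tau^r\omega_i,\alpha_0^\vee\rangle\geq p^{s+1}$ holds only for $r$ odd; for $r$ even one has only $\geq p^s$, which still suffices (and that case is already \cite[Proposition 3.1]{BNP06}).
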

\begin{proof}
(a) This is  \cite[Proposition 3.1]{BNP06} when $r$ is even. When $r$ is odd, we use the same argument. Without loss of generality, we may assume  $\left\langle \mu, \alpha_{0}^{\vee} \right\rangle \leq \left\langle \lambda, \alpha_{0}^{\vee} \right\rangle$. Since all $G$-composition factors of $\Hom_{G_{r/2}}(L(\lambda), L(\mu) \otimes \opH^0(\nu))$ are $G_{r/2}$-trivial, they must be of the form $L(\theta)^{(r/2)}$, for some $\theta \in X(T)$. Let $L(\theta)^{(r/2)}$ be such a factor and then a weight of $L(\mu) \otimes \opH^0(\nu)$ will be $\lambda + \tau^r \theta$; we obtain
\begin{displaymath}
\left\langle \lambda + \tau^r \theta, \alpha_{0}^{\vee} \right\rangle \leq \left\langle \mu + \nu, \alpha_{0}^{\vee} \right\rangle \leq \left\langle \lambda + \nu, \alpha_{0}^{\vee} \right\rangle
\end{displaymath}
(with the last inequality following from the assumption). Thus
\begin{displaymath}
p^s\left\langle \theta, \alpha_{0}^{\vee} \right\rangle \leq p^s\left\langle \tau \theta, \alpha_{0}^{\vee} \right\rangle \leq \left\langle  \nu, \alpha_{0}^{\vee} \right\rangle < p^s,
\end{displaymath}
(with the last inequality following from the hypothesis), pushing $\theta=0$, and thus proving the claim.

Part (b) follows immediately from the proof of part (a).
\end{proof} 
From this point onwards, unless stated otherwise, we let $G$ be of type $F_4$ and $p=2.$ 
Next we prove a result in flavour of \cite[Lemma 5.2]{BNP06}.
\begin{lemma}\label{bnp52}
	Let $\lambda,\mu \in X_{r/2}(T)$ and $\nu \in X(T)_{+}$. Assume further that $2^{s}>4$. If $\Ext^{1}_G(L(\lambda) \otimes V(\nu)^{(r/2)}, L(\mu) \otimes \opH^{0}(\nu)) \neq 0$, then $\left\langle \nu, \alpha_{0}^{\vee} \right\rangle < 17 = h+5$. Furthermore, except for possibly one dominant weight, namely $\nu = 8\omega_{4}$, the non-vanishing implies $\left\langle  \nu,  \alpha_{0}^{\vee}\right\rangle <16.$ 
\end{lemma}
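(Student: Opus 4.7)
The plan is to apply the Lyndon--Hochschild--Serre spectral sequence for the normal subgroup scheme $G_{r/2} \lhd G$ to $\Ext^1_G(V_1, V_2)$, where $V_1 := L(\lambda) \otimes V(\nu)^{(r/2)}$ and $V_2 := L(\mu) \otimes \opH^0(\nu)$. Its five-term exact sequence forces the non-vanishing hypothesis into one of
\begin{displaymath}
E_2^{1,0} = \opH^1(G/G_{r/2}, \Hom_{G_{r/2}}(V_1, V_2)) \quad \text{or} \quad E_2^{0,1} = \Hom_{G/G_{r/2}}(k, \Ext^1_{G_{r/2}}(V_1, V_2)).
\end{displaymath}
Because $V(\nu)^{(r/2)}$ is $G_{r/2}$-trivial, the tensor identity factors $V(\nu)^{*(r/2)} \cong \opH^0(\nu)^{(r/2)}$ out of both $\Hom_{G_{r/2}}(V_1, V_2)$ and $\Ext^1_{G_{r/2}}(V_1, V_2)$; untwisting via $G/G_{r/2} \cong G^{(r/2)}$ then rewrites the two outer terms as
\begin{displaymath}
E_2^{1,0} \cong \opH^1(G, \opH^0(\nu) \otimes N^{(-r/2)}), \qquad E_2^{0,1} \cong \Hom_G(k, \opH^0(\nu) \otimes W^{(-r/2)}),
\end{displaymath}
where $N := \Hom_{G_{r/2}}(L(\lambda), L(\mu) \otimes \opH^0(\nu))$ and $W := \Ext^1_{G_{r/2}}(L(\lambda), L(\mu) \otimes \opH^0(\nu))$.

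For the $E_2^{1,0}$-term I would split on the size of $\nu$. In the range $\langle \nu, \alpha_0^\vee \rangle < 2^s$, Lemma \ref{l33}(a) shows that $N$ has trivial $G$-structure, so $N^{(-r/2)}$ is a sum of copies of $k$ and $E_2^{1,0}$ vanishes by Kempf's vanishing theorem. Otherwise the hypothesis $2^s > 4$ restricts us essentially to $s \in \{3, 4\}$ with $\langle \nu, \alpha_0^\vee \rangle \geq 2^s$; here Lemma \ref{l33}(b) says every weight of $N$ is of the form $\tau^r \theta$ with $\langle \tau^r \theta, \alpha_0^\vee \rangle \leq \langle \nu, \alpha_0^\vee \rangle$, and combining this with the known $G$-structure of $L(\lambda)^* \otimes L(\mu)$ a short case analysis rules out the contribution of $\nu$ with $\langle \nu, \alpha_0^\vee \rangle \geq 17$.

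For the $E_2^{0,1}$-term, non-vanishing forces $L(\nu)$ to appear as a composition factor of $W^{(-r/2)}$, using that $\opH^0(\nu)$ has socle $L(\nu)$ and that $\nu^* = \nu$ in type $F_4$. To bound the composition factors of $W^{(-r/2)}$ I plan to run a further LHS spectral sequence along $B_{r/2} \lhd G_{r/2}$, combined with the induction spectral sequence of \cite[Remark 2.2.1]{BNP+12}, reducing the problem to the $B_{r/2}$-cohomology data already catalogued. The non-zero possibilities are described by Theorem \ref{Br2F4gen} and Corollary \ref{F4weightform}: each contribution arises from a weight of the form $\tau^r \omega - \tau^i \alpha$ or $\tau^r \omega - \beta$, which translates into $(h+4)$-small dominant composition factors of $W^{(-r/2)}$ (matching Remark \ref{different}(a)). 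Consequently $\langle \nu, \alpha_0^\vee \rangle < h + 5 = 17$, giving the first claim.

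The improvement to $\langle \nu, \alpha_0^\vee \rangle < 16$ away from $\nu = 8\omega_4$ requires a finer inspection of the patterns in Theorem \ref{Br2F4gen}: one enumerates all those producing a dominant composition factor of $W^{(-r/2)}$ with $\langle \cdot, \alpha_0^\vee \rangle$ equal to $16$, and verifies that $8\omega_4$ is the unique such weight. This enumeration is the main obstacle, and it leans crucially on the hypothesis $2^s > 4$ to exclude the small-$s$ exceptional entries of Theorem \ref{Br2F4gen} that would otherwise contribute extra borderline weights.
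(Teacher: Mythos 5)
Your opening moves coincide with the paper's: run the LHS spectral sequence for $G_{r/2}\lhd G$, kill the bottom row, and reduce to showing that $L(\nu)$ (equivalently, the weight $\tau^r\nu$) must occur in $W=\Ext^1_{G_{r/2}}(L(\lambda),L(\mu)\otimes\opH^{0}(\nu))$. (Two small points there: the paper kills \emph{all} $E_2^{i,0}$ uniformly, for every $\nu$, using Lemma \ref{l33}(b) together with the projectivity of $V(\nu)$ in the truncation by weights $\beta$ with $\langle\beta,\alpha_0^{\vee}\rangle<\langle\nu,\alpha_0^{\vee}\rangle$, so no case split on the size of $\nu$ is needed; and $2^{s}>4$ means $s\geq 3$, not $s\in\{3,4\}$.) The genuine gap is in how you bound the weights of $W$. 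You assert its composition factors are $(h+4)$-small, ``matching Remark \ref{different}(a)'', but that remark concerns $\Ext^1_{G_{\sigma}}(L(\lambda),L(\mu))$ with no $\opH^{0}(\nu)$ tensor factor; once $\opH^{0}(\nu)$ is present the weights of $W^{(-r/2)}$ grow with $\nu$ (and a priori with $\lambda,\mu$), so they are not $(h+4)$-small, and a reduction to Theorem \ref{Br2F4gen} or Corollary \ref{F4weightform} would have to be run over every $B$-weight of $L(\lambda)^{*}\otimes L(\mu)\otimes\opH^{0}(\nu)$, producing weights $\tau^r\omega$ controlled only by $\lambda+\mu+\nu$. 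No absolute bound on $\nu$ falls out of that.

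The mechanism the paper actually uses, and which is missing from your proposal, is a two-sided squeeze that eliminates the dependence on $\lambda$ and $\mu$. One bound, $\langle\tau^{r}\gamma,\alpha_0^{\vee}\rangle\leq\langle\lambda+\mu+\nu,\alpha_0^{\vee}\rangle+2^{s}$, comes from the explicit values of $\opH^1(G_{r/2},L(\sigma_0))$ in Theorem \ref{f4gr2-simple} applied to composition factors of $L(\lambda)\otimes L(\mu)\otimes\opH^{0}(\nu)$; a second, complementary bound comes from embedding $L(\mu)$ into $\mathrm{St}_{r/2}\otimes L(\cdots)$ and using injectivity of the Steinberg module over $G_{r/2}$ to get a surjection $\Hom_{G_{r/2}}(L(\lambda),R\otimes\opH^{0}(\nu))\onto W$, whose weight bound \emph{decreases} in $\lambda+\mu$. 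Averaging the two kills $\lambda$ and $\mu$; then setting $\gamma=\nu$ pits $\langle\tau^{r}\nu,\alpha_0^{\vee}\rangle=2^{s}\langle\tau\nu,\alpha_0^{\vee}\rangle$ against a bound that is linear in $2^{s}$ plus $\langle\nu,\alpha_0^{\vee}\rangle$, and the identity $\langle\tau\nu,\alpha_0^{\vee}\rangle=\langle\nu,\alpha_0^{\vee}\rangle+2a+2b+c$ forces $\langle\nu,\alpha_0^{\vee}\rangle<17$ and isolates $\nu=8\omega_4$ as the unique weight reaching $16$ by pure arithmetic on the coefficients, not by enumerating cohomology patterns. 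Without the Steinberg surjection and the averaging step, your route cannot produce a bound on $\nu$ independent of $\lambda$ and $\mu$.
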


\begin{proof}
	Consider the LHS spectral sequence 
	\begin{displaymath}
	\begin{aligned}
	E_{2}^{i, j} &=\Ext_{G / G_{r/2}}^{i}(V(\nu)^{(r/2)}, \Ext_{G_{r/2}}^{j}(L(\lambda), L(\mu) \otimes \opH^{0}(\nu))) \\
	& \Rightarrow \Ext_{G}^{i+j}(L(\lambda) \otimes V(\nu)^{(r/2)}, L(\mu) \otimes \opH^{0}(\nu)).
	\end{aligned}
	\end{displaymath}
	Consider the $E_{2}^{i,0}$-term:
	\[
	E_{2}^{i,0}=\Ext_{G / G_{r/2}}^{i}(V(\nu)^{(r/2)},  \Hom_{G_{r/2}}(L(\lambda), L(\mu) \otimes \opH^{0}(\nu)).
	\]
	It follows from Lemma \ref{l33} (b) that any weight $\theta$ of $\Hom_{G_{r/2}}(L(\lambda), L(\mu) \otimes \opH^{0}(\nu))^{(-r/2)}$ satisfies $\left\langle \theta, \alpha_{0}^{\vee} \right\rangle \leq \frac{1}{p^s}\left\langle \nu, \alpha_{0}^{\vee} \right\rangle < \left\langle \nu, \alpha_{0}^{\vee} \right\rangle$. Since $V(\nu)$ is projective in the category of modules with weights $\beta$ so that $\left\langle \beta, \alpha_{0}^{\vee} \right\rangle < \left\langle \nu, \alpha_{0}^{\vee} \right\rangle$, we may conclude that the $E_{2}^{i,0}$ terms vanish. Therefore
	\[
	E_{2} \cong E_{2}^{0,1} \cong \Hom_{G / G_{r/2}}(V(\nu)^{(r/2)}, \Ext^1_{G_{r/2}}(L(\lambda), L(\mu) \otimes \opH^{0}(\nu))).
	\]
	Let $\tau^{r} \gamma$ be a weight of a composition factor of $\Ext_{G_{r/2}}^{1}(L(\lambda), L(\mu) \otimes \opH^{0}(\nu))$. We claim that 
	\begin{equation}\label{ineq1}
	    \left\langle \tau^{r}\gamma , \alpha_{0}^{\vee}\right\rangle  \leq \left\langle \lambda + \mu + \nu, \alpha_{0}^{\vee}\right\rangle + 2^s.
	\end{equation}
	In order to show this, first consider $\opH^1(G_{r/2}, L(\lambda) \otimes L(\mu) \otimes \opH^0(\nu))$. Let $L(\sigma_0) \otimes L(\sigma_1)^{(r/2)}$ be a composition factor of $L(\lambda) \otimes L(\mu) \otimes \opH^0(\nu)$, for some $\sigma_0 \in X_{r/2}$ and $\sigma_1 \in X_{+}$. Hence, in order to bound the weights of $\opH^1(G_{r/2}, L(\lambda) \otimes L(\mu) \otimes \opH^0(\nu))$, we must evaluate the weights of $\opH^1(G_{r/2}, L(\sigma_0)) \otimes L(\sigma_1)^{(r/2)}.$ 
	
	Observe that $\opH^1(G_{r/2}, L(\sigma_0))^{(-r/2)}$ for $\sigma_0 \in X_{r/2}$ was computed in Theorem \ref{f4gr2-simple}.  Let $\tau^r \theta$ denote a weight of $\opH^1(G_{r/2}, L(\sigma_0))$. We claim that it must satisfy
	\begin{equation} \label{sigma0firstbit}
	\left\langle \tau^{r}\theta, \alpha_{0}^{\vee} \right\rangle \leq \left\langle \sigma_0, \alpha_{0}^{\vee} \right\rangle +2^s.
	\end{equation}
	We consider each non-zero instance in the theorem in turn. We present the explicit computation of the case $\sigma_0 = 2^s \omega_3$, for which $\opH^1(G_{r/2}, L(\sigma_0))^{(-r/2)} \cong k \oplus L(\omega_1)$. Since $0\leq \omega_1$, we may assume $\theta=\omega_1$. We obtain
	\begin{displaymath}
	\left\langle \tau^{r}\omega_1, \alpha_{0}^{\vee} \right\rangle = 2^s \cdot 4 \leq 2^s \cdot 3 +2^s.
	\end{displaymath}
	Similar calculations for all of the other choices of $(\sigma_0, \theta)$ lead us to conclude that the inequality (\ref{sigma0firstbit}) holds and this proves the claim.

Thus, if $\tau^r \gamma$ is a weight of $\opH^1(G_{r/2}, L(\lambda) \otimes L(\mu) \otimes \opH^0(\nu))$, we have $\left\langle \tau^{r}\gamma, \alpha_{0}^{\vee} \right\rangle \leq \left\langle \tau^r\theta, \alpha_{0}^{\vee} \right\rangle +\left\langle \tau^r\sigma_1, \alpha_{0}^{\vee} \right\rangle$, for $L(\sigma_0) \otimes L(\sigma_1)^{(r/2)}$ a composition factor of $L(\lambda) \otimes L(\mu) \otimes \opH^0(\nu)$ and $\theta$ a weight of $\opH^1(G_{r/2}, L(\sigma_0))^{(-r/2)}$. Using (\ref{sigma0firstbit}), we obtain
	\[
	\begin{aligned}
	\left\langle \tau^{r}\gamma, \alpha_{0}^{\vee}\right\rangle &\leq \left\langle \tau^{r}\theta, \alpha_{0}^{\vee}\right\rangle+\left\langle \tau^{r}\sigma_{1}, \alpha_{0}^{\vee}\right\rangle \leq\left\langle\sigma_{0}, \alpha_{0}^{\vee}\right\rangle+2^{s}+\left\langle \tau^{r}\sigma_{1}, \alpha_{0}^{\vee}\right\rangle \\
	& \leq \left\langle \lambda+\mu+\nu, \alpha_{0}^{\vee}\right\rangle+2^{s}.
	\end{aligned}
	\]
	This verifies (\ref{ineq1}). 
	
	Consider the short exact sequence 
	\begin{displaymath}
	0 \rightarrow L(\mu) \rightarrow \mathrm{St}_{r/2} \otimes L\left((2^s-1)(\omega_{1}+\omega_{2})+(2^{s+1}-1)(\omega_{3}+\omega_{4})+w_{0} \mu\right) \rightarrow R \rightarrow 0.
	\end{displaymath}
	
	Using the long exact sequence of cohomology, along with the fact that $\mathrm{St}_{r/2}$ is injective as a $G_{r/2}$-module, one obtains a surjection
	\[
	\Hom_{G_{r/2}}(L(\lambda), R \otimes \opH^{0}(\nu)) \twoheadrightarrow \Ext_{G_{r/2}}^{1}(L(\lambda), L(\mu) \otimes \opH^{0}(\nu)).
	\]
	Hence, any weight $\tau^{r} \gamma$ of $\Ext_{G_{r/2}}^{1}(L(\lambda), L(\mu) \otimes \opH^{0}(\nu))$ also satisfies
	\begin{equation} \label{ineq2}
	\begin{aligned}
	\left\langle \tau^{r}\gamma, \alpha_{0}^{\vee}\right\rangle \leq 2(2^{s}-1)\left\langle \tau(\omega_{3}+\omega_{4}), \alpha_{0}^{\vee}\right\rangle &+2(2^{s+1}-1)\left\langle \omega_{3}+\omega_{4}, \alpha_{0}^{\vee}\right\rangle  \\
	&-\left\langle\lambda, \alpha_{0}^{\vee}\right\rangle-\left\langle\mu, \alpha_{0}^{\vee}\right\rangle+\left\langle \nu, \alpha_{0}^{\vee}\right\rangle.
	\end{aligned}
	\end{equation}
	
	Adding (\ref{ineq1}) and (\ref{ineq2}) and dividing by two yields
	\begin{equation} \label{weight}
	\begin{aligned}
	\left\langle \tau^{r}\gamma, \alpha_{0}^{\vee}\right\rangle & \leq (2^{s}-1)\left\langle \tau(\omega_{3}+\omega_{4}), \alpha_{0}^{\vee}\right\rangle+(2^{s+1}-1)\left\langle \omega_{3}+\omega_{4}, \alpha_{0}^{\vee}\right\rangle+\left\langle \nu, \alpha_{0}^{\vee}\right\rangle +2^{s-1}.\\
	\left\langle \tau^{r}\gamma, \alpha_{0}^{\vee}\right\rangle & \leq (2^{s}-1)\cdot 6 +(2^{s+1}-1) \cdot 5 + 2^{s-1} +\left\langle \nu, \alpha_{0}^{\vee}\right\rangle.
	\end{aligned}
	\end{equation}
	
	Since we assume $E^{0,1}_2 \neq 0$, we may assume $\tau^r \nu$ is a weight of $\Ext_{G_{r/2}}^{1}(L(\lambda), L(\mu) \otimes \opH^{0}(\nu))$. Therefore, put $\gamma=\nu$ to get
	\begin{equation}
	\left\langle \tau^{r}\nu, \alpha_{0}^{\vee}\right\rangle  \leq (2^{s}-1)\cdot 6 +(2^{s+1}-1) \cdot 5 + 2^{s-1} +\left\langle \nu, \alpha_{0}^{\vee}\right\rangle.
	\end{equation}
	Then, we have 
	\begin{equation} \label{nu-first}
	\langle \tau^{r}\nu, \alpha_{0}^{\vee}\rangle - \langle \nu, \alpha_{0}^{\vee}\rangle  \leq (2^{s}-1)\cdot 6 +(2^{s+1}-1) \cdot 5 + 2^{s-1}.
	\end{equation}
	Therefore, to finish the proof, we must investigate the link between $\left\langle \nu, \alpha_{0}^{\vee}\right\rangle$ and $\left\langle \tau^{r} \nu, \alpha_{0}^{\vee}\right\rangle.$  
	
	Since $\nu \in X(T)_{+}$, we may write $\nu  = a \omega_1+ b \omega_2 + c \omega_3 + d \omega_4$, for some non-negative integers $a,b,c,d$. 	
	Then, $\tau \nu = 2a \omega_4 + 2b \omega_3 + c \omega_2 + d \omega_1.$
	
	Furthermore, recalling $\left\langle  \omega_4,  \alpha_{0}^{\vee}\right\rangle = 2,  \left\langle  \omega_3,  \alpha_{0}^{\vee}\right\rangle = 3, \left\langle  \omega_2,  \alpha_{0}^{\vee}\right\rangle = 4, \left\langle  \omega_1,  \alpha_{0}^{\vee}\right\rangle = 2$, we have $\left\langle  \tau \nu,  \alpha_{0}^{\vee}\right\rangle = \left\langle  \nu,  \alpha_{0}^{\vee}\right\rangle +2a+2b+c.$
	Since $\left\langle  \tau\nu,  \alpha_{0}^{\vee}\right\rangle \geq \left\langle  \nu,  \alpha_{0}^{\vee}\right\rangle$, inequality (\ref{nu-first}) yields
   \begin{displaymath}
    \langle \tau^{r}\nu, \alpha_{0}^{\vee}\rangle - \langle \tau \nu, \alpha_{0}^{\vee}\rangle \leq \langle \tau^{r}\nu, \alpha_{0}^{\vee}\rangle - \langle \nu, \alpha_{0}^{\vee}\rangle  \leq (2^{s}-1)\cdot 6 +(2^{s+1}-1) \cdot 5 + 2^{s-1},
   \end{displaymath}
   giving
   \begin{displaymath}
   \langle \tau \nu, \alpha_{0}^{\vee}\rangle  \leq  6 +\frac{2^{s+1}-1}{2^s-1}\cdot 5 + \frac{2^{s-1}}{2^s-1}.
   \end{displaymath}
   Notice that, if $s \geq 3$, $ \langle \tau \nu, \alpha_{0}^{\vee}\rangle <18$ and if $s \geq 4$, $ \langle \tau \nu, \alpha_{0}^{\vee}\rangle <17$. Recall that $\left\langle  \tau \nu,  \alpha_{0}^{\vee}\right\rangle = \left\langle  \nu,  \alpha_{0}^{\vee}\right\rangle +2a+2b+c$, with $a,b,c \geq 0$. First, they are equal only when $a=b=c=0$ and thus we get $\left\langle   \nu,  \alpha_{0}^{\vee}\right\rangle=2d<18$. Since $d$ is a non-negative integer, we must have $d \leq 8$, in which case $\left\langle \nu,  \alpha_{0}^{\vee}\right\rangle \leq 16$ (with equality only for $d=8$ and $\nu=8\omega_4$). 

   It remains to investigate the case $\left\langle  \tau \nu,  \alpha_{0}^{\vee}\right\rangle \neq \left\langle \nu,  \alpha_{0}^{\vee}\right\rangle$, for which $2a+2b+c>0$. It is readily verifiable that $2a+2b+c \geq 2$ implies $\left\langle  \nu,  \alpha_{0}^{\vee}\right\rangle <16.$ Otherwise, $2a+2b+c=1$ and it immediately follows that $c=1$ and $a=b=0$. Therefore $ \nu=\omega_3+d\omega_{4}$, with $\left\langle  \nu,  \alpha_{0}^{\vee}\right\rangle =3+2d<17$. This inequality forces $d \leq 6$, in which case, $\left\langle  \nu,  \alpha_{0}^{\vee}\right\rangle \leq 15 <16$, as claimed.
	\end{proof}

\begin{remark} \label{different}
\begin{itemize}
    \item[(a)] One may show, using an argument very similar to Lemma \ref{bnp52}, that for $\lambda, \mu \in X_{\sigma}$,  $\Ext^1_{G_{\sigma}}(L(\lambda), L(\mu))^{(-\sigma)}$ is a rational $G$-module whose composition factors have high weights $\nu$ which satisfy  $\langle \nu, \alpha_0^{\vee} \rangle \leq h+4$. 
    
    \item[(b)] Let $\sigma: G \to G$ denote the appropriate strict endomorphism so that $G(\sigma)$ is a finite group of Lie type and $G_{\sigma}$ the associated scheme-theoretic kernel.
    
    By \cite[Theorem 2.3.1]{BNP+12}, for all $(G, p, \sigma)$ aside from the case where $G=F_4$, $p=2$ and $\sigma$ is an exceptional isogeny, there exists the following result concerning $G_{\sigma}$-extensions:  $\Ext^1_{G_{\sigma}}(L(\lambda), L(\mu))^{(-\sigma)}$ for $\lambda, \mu \in X_{\sigma}$ is a rational $G$-module whose composition factors have high weights $\nu$ which are $(h-1)$-small. Part $(a)$ fills a gap in their result.
\end{itemize}
\end{remark}

 By Lemma \ref{bnp52}, we know that $\Ext^{1}_G(L(\lambda) \otimes V(\nu)^{(r/2)}, L(\mu) \otimes \opH^{0}(\nu)) \neq 0$ implies $\langle \nu, \alpha_{0}^{\vee}\rangle < 17$. Thus, let us define $\Gamma \subseteq X_{+}$ to be the following set of dominant weights:
\begin{displaymath}
\Gamma=\{ \nu \in X(T)_+ : \langle \nu, \alpha_{0}^{\vee}\rangle < 17 \},
\end{displaymath}
and let $\mathcal{G}_{\Gamma}(k)$ be the finite-dimensional truncated submodule of $\mathcal{G}(k)$ with composition factors with highest weights in $\Gamma$.

We obtain for $\lambda, \mu \in X_{\sigma}$,
\begin{equation} \label{Gamma}
\Ext_{G(\sigma)}^{1}(L(\lambda), L(\mu)) \cong  \Ext_{G}^{1}(L(\lambda), L(\mu) \otimes \mathcal{G}_{\Gamma}(k)).
\end{equation}

\subsection{Finite group extensions} \label{subsec42}
Next, we make use of $(\ref{Gamma})$ and Theorem \ref{sections} to deduce some information concerning $\Ext_{G(\sigma)}^{1}(L(\lambda), L(\mu))$, under some conditions on the size of the finite group, $\prescript{2}{}F_4(2^{2s+1})$ -- the conditions will therefore be imposed on the value of $s$ and hence $r=2s+1$. 

First, by \cite[(5.3.1)]{BNP06}, we have for $W$ a $G$-module with a filtration $0=W_0 \subset W_1 \subset W_2 \subset \ldots \subset W_l=W$, for all $G$-modules $V$,
\begin{equation} \label{dimensions}
    \mathrm{dim} \Ext^1_G(V,W) \leq \sum_{n=1}^l \mathrm{dim} \Ext^1_G(V, W_n/W_{n-1}).
\end{equation}
\begin{prop} \label{identif}
	 Let $s \geq 7,$ such that $r \geq 15.$ Let $\lambda, \mu \in X_{r/2}$ and $\Gamma^{\prime}=\Gamma-\{0\}$. Then, the following hold: 

	(a) We have
	\begin{displaymath}
	\mathrm{dim} \Ext_{G(\sigma)}^{1}(L(\lambda), L(\mu))  \leq \mathrm{dim} \Ext_{G}^{1}(L(\lambda), L(\mu)) + \mathrm{dim} R,
	\end{displaymath}
	where 
	\begin{displaymath}
	\begin{aligned}
	R & \cong \bigoplus_{\nu \in \Gamma^{\prime}} \Ext_{G}^{1}(L(\lambda) \otimes V(\nu)^{(r/2)}, L(\mu) \otimes \opH^{0}(\nu)) \\
	& \cong \bigoplus_{\nu \in \Gamma^{\prime}} \Hom_{G / G_{r/2}}(V(\nu)^{(r/2)}, \Ext_{G_{r/2}}^{1}(L(\lambda), L(\mu) \otimes \opH^{0}(\nu))).
	\end{aligned}
	\end{displaymath}
	(b) Let $4 \leq t \leq s-3$. Set $\lambda=\lambda_{0}+2^{t} \lambda_{1}$ and $\mu=\mu_{0}+2^{t} \mu_{1}$ with $\lambda_{0}, \mu_{0} \in X_{t}$ and $\lambda_{1}, \mu_{1} \in X_{r/2-t}$.
	Then we may reidentify $R$ as
	\[
	\begin{aligned}
	R &\cong \bigoplus_{\nu \in \Gamma^{\prime}} \Ext_{G}^{1}(L\left(\lambda_{1}\right) \otimes V(\nu)^{(r/2-t)}, L\left(\mu_{1}\right)) \otimes \Hom_{G}(L(\lambda_{0}), L(\mu_{0}) \otimes \opH^{0}(\nu)) \\
	& \cong \bigoplus_{\nu \in \Gamma^{\prime}} \Hom_{G}(V(\nu)^{(r/2-t)}, \Ext_{G_{r/2-t}}^{1}(L(\lambda_{1}), L(\mu_{1}))) \otimes \Hom_{G}(L(\lambda_{0}), L(\mu_{0}) \otimes \opH^{0}(\nu)).
	\end{aligned}
	\]
\end{prop}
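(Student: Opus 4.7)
My plan is to adapt the strategy of \cite[Proposition 5.3]{BNP06} to the exotic-isogeny setting. For part (a), I start from the isomorphism (\ref{Gamma}) in order to reduce to computing $\Ext^1_G(L(\lambda), L(\mu) \otimes \mathcal{G}_{\Gamma}(k))$. By Theorem \ref{sections}, $\mathcal{G}_{\Gamma}(k)$ carries a filtration whose sections are $\opH^0(\nu) \otimes \opH^0(\nu^{\ast})^{(r/2)}$, one for each $\nu \in \Gamma$. The section at $\nu = 0$ is the trivial module $k$ and contributes $\Ext^1_G(L(\lambda), L(\mu))$. Applying the dimension inequality (\ref{dimensions}) to the remaining filtration, and using $\nu^{\ast} = \nu$ since $w_0 = -1$ for $F_4$, together with tensor-hom adjointness and $V(\nu) \cong \opH^0(\nu)^{\ast}$, each remaining section converts to a summand $\Ext^1_G(L(\lambda) \otimes V(\nu)^{(r/2)}, L(\mu) \otimes \opH^0(\nu))$, yielding the first displayed form of $R$.

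For the second form of $R$, I run the LHS spectral sequence associated to $G_{r/2} \lhd G$ applied to the pair $L(\lambda) \otimes V(\nu)^{(r/2)}$ and $L(\mu) \otimes \opH^0(\nu)$:
\[
E_2^{i,j} = \Ext^i_{G/G_{r/2}}\bigl(V(\nu)^{(r/2)}, \Ext^j_{G_{r/2}}(L(\lambda), L(\mu) \otimes \opH^0(\nu))\bigr).
\]
By Lemma \ref{l33}(b), every weight $\tau^r \theta$ of $\Hom_{G_{r/2}}(L(\lambda), L(\mu) \otimes \opH^0(\nu))$ satisfies $\langle \theta, \alpha_0^\vee \rangle < \langle \nu, \alpha_0^\vee \rangle$, so $V(\nu)$ is projective relative to the corresponding truncated category of $G$-modules. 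Hence $E_2^{i,0} = 0$ for $i \geq 1$, and the edge map gives $E_2^{1} \cong E_2^{0,1}$, which is the desired second expression.

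For part (b), I apply Steinberg's tensor product theorem at level $t$, writing $L(\lambda) \cong L(\lambda_0) \otimes L(\lambda_1)^{(t)}$ and similarly for $\mu$, and then run the LHS spectral sequence for $G_t \lhd G_{r/2}$, using $G_{r/2}/G_t \cong G_{r/2 - t}^{(t)}$. Since $t \geq 4$, Lemma \ref{l33}(a) (applied with $t$ in place of $s$) guarantees that $\Hom_{G_t}(L(\lambda_0), L(\mu_0) \otimes \opH^0(\nu))$ coincides with $\Hom_G(L(\lambda_0), L(\mu_0) \otimes \opH^0(\nu))$ and has trivial $G$-structure for every $\nu \in \Gamma'$ with $\langle \nu, \alpha_0^\vee \rangle < 2^t$. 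A vanishing argument at level $t$ mirroring that of (a), followed by a further LHS spectral sequence of the same flavour for $G_{r/2 - t}^{(t)} \lhd G$, then separates the inner $G_t$-part from the outer $G_{r/2 - t}^{(t)}$-part and produces both displayed product forms.

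The main obstacle is the exceptional dominant weight $\nu = 8\omega_4$ flagged in Lemma \ref{bnp52}, for which $\langle \nu, \alpha_0^\vee \rangle = 16 = 2^4$; here the hypothesis of Lemma \ref{l33}(a) is borderline at $t = 4$. For this single weight I expect to argue directly using Lemma \ref{l33}(b) to bound the $T$-weights of $\Hom_{G_t}(L(\lambda_0), L(\mu_0) \otimes \opH^0(8\omega_4))$, together with the hypothesis $s \geq 7$ (which forces $r/2 - t \geq 3$, preserving a non-trivial outer Frobenius twist), in order to show that the $\nu = 8\omega_4$ contribution still assembles into the claimed form. Once this borderline case is dispatched, the rest of the proof is the formal manipulation of nested spectral sequences outlined above.
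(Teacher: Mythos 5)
Your part (a) is essentially the paper's argument: the filtration of $\mathcal{G}_{\Gamma}(k)$ from Theorem \ref{sections}, the dimension bound (\ref{dimensions}), the adjunction converting each section into $\Ext^1_G(L(\lambda)\otimes V(\nu)^{(r/2)}, L(\mu)\otimes \opH^0(\nu))$, and the LHS spectral sequence for $G_{r/2}\lhd G$ with $E_2^{i,0}=0$. Your justification of that vanishing (projectivity of $V(\nu)$ in the truncated category, via Lemma \ref{l33}(b)) differs slightly from the paper's (which uses the trivial $G$-structure of $\Hom_{G_{r/2}}(L(\lambda),L(\mu)\otimes\opH^0(\nu))$ from Lemma \ref{l33}(a), since $2^s>17$, together with $\Ext^i_G(V(\nu),k)=0$), but both are sound; indeed the paper uses your version inside Lemma \ref{bnp52}. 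Your anticipated treatment of the borderline weight $\nu=8\omega_4$ is also on target: the paper handles it uniformly by noting that Lemma \ref{l33}(b) gives $\langle\gamma,\alpha_0^\vee\rangle\leq \langle\nu,\alpha_0^\vee\rangle/2^t\leq 1$ for any weight $\gamma$ of $\Hom_{G_t}(L(\lambda_0),L(\mu_0)\otimes\opH^0(\nu))^{(-t)}$, and no nonzero dominant weight of $F_4$ pairs to $1$ with $\alpha_0^\vee$, so $\gamma=0$ and the $\Hom$ has trivial $G$-structure without excluding $8\omega_4$.

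There is, however, a genuine gap in part (b). The factorized form of $R$ is the $E_2^{1,0}$-term of the LHS spectral sequence for $G_t\lhd G$ applied to $\Ext^1_G(L(\lambda_1)^{(t)}\otimes V(\nu)^{(r/2)}, L(\mu_0)\otimes\opH^0(\nu)\otimes L(\mu_1)^{(t)})$ (note the paper works with $G_t\lhd G$, not $G_t\lhd G_{r/2}$ as you propose, since the first expression for $R$ is an $\Ext^1$ over $G$). To identify $R$ with $E_2^{1,0}$ you must show $E_2^{0,1}=0$, and this is \emph{not} a mirror of the vanishing in (a), which kills the bottom row and identifies the answer with the opposite corner $E_2^{0,1}$. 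The required input is a bound on the weights of $\Ext^1_{G_t}(L(\lambda_0),L(\mu_0)\otimes\opH^0(\nu))^{(-t)}$ — the paper cites \cite[(5.2.4)]{BNP06} to get every such weight $\gamma$ satisfying $\langle\gamma,\alpha_0^\vee\rangle< h=12$ — which is then compared against $\langle\tau^{r-2t}\nu,\alpha_0^\vee\rangle=2^{s-t}\langle\tau\nu,\alpha_0^\vee\rangle\geq 2^{s-t}\cdot 2\geq 16$ for $\nu\neq 0$. This is precisely where the hypothesis $t\leq s-3$ is consumed; you attribute the condition $s-t\geq 3$ to the $8\omega_4$ case instead, where it is not needed. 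Without this $E_2^{0,1}$ vanishing your nested spectral sequences only yield a five-term exact sequence relating $R$ to both corners, not the claimed tensor factorization. The final step (the second reidentification via $G_{r/2-t}\lhd G$, using that $\Hom_{G_{r/2-t}}(L(\lambda_1),L(\mu_1))$ is zero or trivial and $\Ext^i_G(V(\nu),k)=0$) is as you describe.
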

\begin{proof}
(a)  Note that by the previous discussion and Theorem \ref{sections}, $\mathcal{G}_{\Gamma}(k)$ has a filtration with factors of the form $\opH^0(\nu) \otimes \opH^0(\nu^{\ast})^{(r/2)}$, exactly one for each $\nu \in \Gamma$. 

Now, by (\ref{Gamma}) and (\ref{dimensions}), we obtain
\begin{displaymath}
\begin{aligned}
\mathrm{dim} \Ext_{G(\sigma)}^{1}(L(\lambda), L(\mu)) & = \mathrm{dim} \Ext_{G}^{1}(L(\lambda), L(\mu) \otimes \mathcal{G}_{\Gamma}(k)) \\
& \leq \sum_{\nu \in \Gamma} \mathrm{dim} \Ext_{G}^{1}(L(\lambda) \otimes V(\nu)^{(r/2)}, L(\mu) \otimes \opH^0(\nu)) \\
& = \mathrm{dim} \Ext_{G}^{1}(L(\lambda), L(\mu)) + \\
& \qquad \sum_{\nu \in \Gamma^{\prime}} \mathrm{dim} \Ext_{G}^{1}(L(\lambda) \otimes V(\nu)^{(r/2)}, L(\mu) \otimes \opH^0(\nu)).
\end{aligned}
\end{displaymath}
The first isomorphism is an immediate consequence of (\ref{Gamma}) and the properties of $\mathcal{G}_{\Gamma^{\prime}}(k)$. 
For the other isomorphism, note that since $2^s \geq 2^5 >17$, we may apply Lemma \ref{l33} (a) to conclude that $\Hom_{G_{r/2}}(L(\lambda), L(\mu) \otimes \opH^0(\nu))$ has trivial $G$-structure. 

Now, let $M:=\Ext_{G}^{1}(L(\lambda) \otimes V(\nu)^{(r/2)}, L(\mu) \otimes \opH^{0}(\nu))$ and we run the LHS spectral sequence corresponding to $G_{r/2} \lhd G$. First, we investigate the $E_2^{i,0}$-term and we get
\begin{equation*}
\begin{aligned}
E_2^{i,0} & \cong \Ext^i_{G/G_{r/2}}(V(\nu)^{(r/2)}, \Hom_{G_{r/2}}(L(\lambda), L(\mu) \otimes \opH^0(\nu))) \\
& \cong \Ext^i_G(V(\nu), k)\otimes \Hom_{G_{r/2}}(L(\lambda), L(\mu) \otimes \opH^0(\nu)).
\end{aligned}
\end{equation*}
By \cite[II.4.13]{Jan03}, $\Ext^i_G(V(\nu), k)=0$ for $i >0$, so we conclude that the $E_2^{i,0}$-terms all vanish. Hence $M \cong E_2^{0,1}$, giving \[R \cong \bigoplus_{\nu \in \Gamma^{\prime}} \Hom_{G / G_{r/2}}(V(\nu)^{(r/2)}, \Ext_{G_{r/2}}^{1}(L(\lambda), L(\mu) \otimes \opH^{0}(\nu))),\] the desired result. 

For (b), let $\lambda$ and $\mu$ be expressed as suggested. We apply the LHS spectral sequence corresponding to $G_{t} \lhd G$ to the terms in the first expression for $R$ in part (a). The $E_2$-page is given by 
\begin{displaymath}
E^{i,j}_2:= \Ext^i_{G/G_{t}}(L(\lambda_1)^{(t)} \otimes V(\nu)^{(r/2)}, \Ext^j_{G_{t}}(L(\lambda_0), L(\mu_0) \otimes \opH^0(\nu)) \otimes L(\mu_1)^{(t)}).
\end{displaymath}
First we consider the $E_2^{0,1}$-term.
\begin{align*}
    E_2^{0,1} & \cong \Hom_{G/G_{t}}(L(\lambda_1)^{(t)} \otimes V(\nu)^{(r/2)}, \Ext^1_{G_{t}}(L(\lambda_0), L(\mu_0) \otimes \opH^0(\nu)) \otimes L(\mu_1)^{(t)}) \\
& \cong \Hom_{G}(L(\lambda_1) \otimes V(\nu)^{(r/2-t)}, \Ext^1_{G_{t}}(L(\lambda_0), L(\mu_0) \otimes \opH^0(\nu))^{(-t)} \otimes L(\mu_1)).
\end{align*}
By \cite[(5.2.4)]{BNP06}, any weight $\gamma$ of $\Ext^1_{G_{t}}(L(\lambda_0), L(\mu_0) \otimes \opH^0(\nu))^{(-t)}$ satisfies $\langle \gamma, \alpha_{0}^{\vee}\rangle \leq \frac{2^{t}-1}{2^{t}}(h-1)+ \frac{\langle \nu, \alpha_{0}^{\vee}\rangle}{2^{t}}+\frac{3}{4} < h = 12.$ Assume without loss of generality that $\left\langle  \mu_1,  \alpha_{0}^{\vee}\right\rangle \leq \left\langle  \lambda_1,  \alpha_{0}^{\vee}\right\rangle$. Therefore, $E^{0,1}_2$ vanishes unless $\langle \tau^{r-2t} \nu, \alpha_{0}^{\vee}\rangle\leq \langle \gamma, \alpha_{0}^{\vee}\rangle$. We obtain $\langle \tau^{r-2t} \nu, \alpha_{0}^{\vee}\rangle=2^{s-t}\langle \tau \nu, \alpha_{0}^{\vee}\rangle \leq \langle \gamma, \alpha_{0}^{\vee}\rangle <12$. Assuming $\nu \neq 0$, we have $\langle \tau \nu, \alpha_{0}^{\vee}\rangle \geq 2$, so $E^{0,1}_2=0$, since $s-t \geq 3$. Thus, we have $E_2 \cong E^{1,0}_2$.

It remains to compute the $E_2^{1,0}$-term. We have
\begin{displaymath}
E_2^{1,0}  \cong \Ext^1_{G/G_{t}}(L(\lambda_1)^{(t)} \otimes V(\nu)^{(r/2)}, \Hom_{G_{t}}(L(\lambda_0), L(\mu_0) \otimes \opH^0(\nu)) \otimes L(\mu_1)^{(t)}) 
\end{displaymath}
By Lemma \ref{l33} (b), any weight $\gamma$ of $\Hom_{G_{t}}(L(\lambda_0), L(\mu_0) \otimes \opH^0(\nu))^{(-t)}$ satisfies $\langle \gamma, \alpha_{0}^{\vee}\rangle \leq \frac{1}{p^t}\langle  \nu, \alpha_{0}^{\vee}\rangle$. Now, since $t \geq 4$ and $\langle \nu, \alpha_{0}^{\vee}\rangle \leq 16$ by Lemma \ref{bnp52}, it follows that $\langle \gamma, \alpha_{0}^{\vee}\rangle \leq 1$. However, this forces $\langle \gamma, \alpha_{0}^{\vee}\rangle =0$, so $\Hom_{G_{t}}(L(\lambda_0), L(\mu_0) \otimes \opH^0(\nu))$ must have trivial $G$-structure. Thus 
\begin{equation*}
\begin{aligned}
E_2^{1,0} & \cong \Ext^1_{G/G_{t}}(L(\lambda_1)^{(t)} \otimes V(\nu)^{(r/2)}, L(\mu_1)^{(t)}) \otimes \Hom_{G_{t}}(L(\lambda_0), L(\mu_0) \otimes \opH^0(\nu)) \\
& \cong \Ext^1_G(L(\lambda_1) \otimes V(\nu)^{(r/2-t)}, L(\mu_1))\otimes \Hom_{G}(L(\lambda_0), L(\mu_0) \otimes \opH^0(\nu)).
\end{aligned}
\end{equation*}
This is the first reidentification. Now, consider $\Ext^1_G(L(\lambda_1) \otimes V(\nu)^{(r/2-t)}, L(\mu_1))$ and we apply the LHS spectral sequence corresponding to $G_{r/2-t} \lhd G.$ First, consider the $E_2^{i,0}$-term, for $i>0$. We obtain 
\begin{displaymath}
E_2^{i,0} \cong \Ext^i_{G/G_{r/2-t}}(V(\nu)^{(r/2-t)}, \Hom_{G_{r/2-t}}(L(\lambda_1), L(\mu_1))).
\end{displaymath}
Then, there are two possibilities -- either $\lambda_1 = \mu_1$ or not. If they are not equal, it follows that $\Hom_{G_{r/2-t}}(L(\lambda_1), L(\mu_1))$ automatically vanishes, so $E_2^{1,0}=E_2^{2,0}=0.$ If they are equal, $\Hom_{G_{r/2-t}}(L(\lambda_1), L(\lambda_1))$ has trivial $G$-structure, and, once again $E_2^{1,0}$ and $E_2^{2,0}$ vanish, as $\Ext^i_G(V(\nu), k)=0$, $i>0$ (cf. \cite[II.4.13]{Jan03}). We may now conclude that 
\begin{displaymath}
\Ext^1_G(L(\lambda_1) \otimes V(\nu)^{(r/2-t)}, L(\mu_1)) \cong \Hom_{G/G_{r/2-t}}(V(\nu)^{(r/2-t)}, \Ext^1_{G_{r/2-t}}(L(\lambda_1), L(\mu_1))),
\end{displaymath}
and this completes the proof.
\end{proof}

\begin{cor}\label{corR}
    With the hypothesis of the previous proposition, there exists an isomorphism $\Ext^1_{G(\sigma)}(L(\lambda), L(\mu)) \cong \Ext^1_G(L(\lambda), L(\mu))$ if either of the following hold:
    \begin{itemize}
        \item[(i)] $\Ext^1_{G_{r/2-t}}(L(\lambda_1), L(\mu_1))=0$ 
        \item[(ii)] $\Hom_G(L(\lambda_0), L(\mu_0) \otimes \opH^0(\nu))=0$, for all $\nu \in \Gamma^{\prime}$.
    \end{itemize}
\end{cor}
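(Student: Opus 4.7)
The plan is to combine the dimension bound provided by Proposition \ref{identif} with an injectivity argument in the opposite direction. Under either hypothesis (i) or (ii), the second expression for $R$ in Proposition \ref{identif}(b) contains a tensor factor that vanishes uniformly in $\nu$: case (i) annihilates the $\Ext^{1}_{G_{r/2-t}}(L(\lambda_1), L(\mu_1))$ factor, while case (ii) annihilates the $\Hom_{G}(L(\lambda_0), L(\mu_0) \otimes \opH^{0}(\nu))$ factor for every $\nu \in \Gamma'$. Thus $R = 0$, and Proposition \ref{identif}(a) gives $\dim \Ext^{1}_{G(\sigma)}(L(\lambda), L(\mu)) \leq \dim \Ext^{1}_{G}(L(\lambda), L(\mu))$. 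Since these are finite-dimensional, it will suffice to exhibit an injection $\Ext^{1}_{G}(L(\lambda), L(\mu)) \hookrightarrow \Ext^{1}_{G(\sigma)}(L(\lambda), L(\mu))$.

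For this injection, I would use the short exact sequence $0 \to k \to \mathcal{G}_{\Gamma}(k) \to Q \to 0$ arising from the bottom ($\nu = 0$) piece of the filtration in Theorem \ref{sections}, whose minimal factor $\opH^{0}(0) \otimes \opH^{0}(0)^{(r/2)}$ is $k$. Tensoring by $L(\mu)$ and applying $\Hom_{G}(L(\lambda), -)$ yields a long exact sequence with relevant stretch
\[
\Hom_{G}(L(\lambda), L(\mu) \otimes Q) \to \Ext^{1}_{G}(L(\lambda), L(\mu)) \xrightarrow{f} \Ext^{1}_{G}(L(\lambda), L(\mu) \otimes \mathcal{G}_{\Gamma}(k)),
\]
where (\ref{Gamma}) identifies the final term with $\Ext^{1}_{G(\sigma)}(L(\lambda), L(\mu))$ and $f$ is precisely the restriction map. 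Injectivity of $f$ then reduces to the vanishing $\Hom_{G}(L(\lambda), L(\mu) \otimes Q) = 0$.

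This Hom-vanishing holds without needing (i) or (ii). Using the filtration of $Q$ inherited from Theorem \ref{sections} with factors $\opH^{0}(\nu) \otimes \opH^{0}(\nu^{*})^{(r/2)}$ for $\nu \in \Gamma'$, together with the Hom-analogue of (\ref{dimensions}), the problem reduces to showing $\Hom_{G}(L(\lambda) \otimes V(\nu)^{(r/2)}, L(\mu) \otimes \opH^{0}(\nu)) = 0$ for each such $\nu$. The LHS spectral sequence for $G_{r/2} \lhd G$ embeds this $\Hom$ in $\Hom_{G/G_{r/2}}(V(\nu)^{(r/2)}, \Hom_{G_{r/2}}(L(\lambda), L(\mu) \otimes \opH^{0}(\nu)))$; the assumption $s \geq 7$ gives $\langle \nu, \alpha_{0}^{\vee} \rangle < 17 \leq 2^{s}$, so Lemma \ref{l33}(a) forces the inner Hom to carry a trivial $G$-action, and the outer Hom then vanishes since $\Hom_{G}(V(\nu), k) = 0$ for $\nu \neq 0$. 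The main subtlety I anticipate is bookkeeping: matching each hypothesis to the correct tensor factor in Proposition \ref{identif}(b), and checking that $k$ genuinely sits at the bottom of $\mathcal{G}_{\Gamma}(k)$ as a subobject so that the short exact sequence above is available — both are direct consequences of the dominance-compatible ordering asserted in Theorem \ref{sections}, but they need to be verified explicitly before the long exact sequence can be quoted.
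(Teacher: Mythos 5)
Your argument is correct. The first half --- that either hypothesis kills one tensor factor in every summand of $R$ in Proposition \ref{identif}(b), so $R=0$ and part (a) gives $\dim \Ext^1_{G(\sigma)}(L(\lambda),L(\mu)) \leq \dim \Ext^1_G(L(\lambda),L(\mu))$ --- is exactly the intended reduction. For the reverse inequality the paper does not argue from scratch: in the proof of Theorem \ref{ext} it simply cites \cite[2.1(c)]{Sin94a} for the injection $\Ext^1_G(L(\lambda),L(\mu)) \hookrightarrow \Ext^1_{G(\sigma)}(L(\lambda),L(\mu))$ of restriction. You instead reprove this injection inside the truncated category: the exact sequence $0 \to k \to \mathcal{G}_{\Gamma}(k) \to Q \to 0$ (with $k$ genuinely the bottom filtration layer by the dominance-compatible ordering in Theorem \ref{sections}), the identification of the restriction map with the map induced by $k \to \mathcal{G}_{\Gamma}(k)$ under (\ref{Gamma}), and the vanishing of $\Hom_G(L(\lambda)\otimes V(\nu)^{(r/2)}, L(\mu)\otimes\opH^0(\nu))$ for $\nu \in \Gamma'$ via Lemma \ref{l33}(a) and $\Hom_G(V(\nu),k)=0$. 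All the steps check out ($17 \leq 2^s$ for $s\geq 7$, left-exactness of $\Hom$ along the filtration of $Q$, and $\opH^0(\nu^*)^{(r/2)} \cong (V(\nu)^{(r/2)})^*$). What your route buys is self-containedness --- the corollary no longer leans on the external reference --- at the cost of a page of spectral-sequence bookkeeping that the citation makes unnecessary; either is acceptable.
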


Next, we provide an analogue of \cite[Theorem~5.4]{BNP06} showing that generically, for the Ree groups of type $F_4$, self-extensions between simple modules vanish.

\begin{theorem}\label{self}Let $r=2s+1$ be odd with $s\geq 7$. Then \[\Ext^1_{G(\sigma)}(L(\lambda),L(\lambda))=0,\] for all $\lambda\in X_{\sigma}$.\end{theorem}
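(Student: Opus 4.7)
The plan is to combine Proposition \ref{identif} with the standard vanishing of self-extensions of simple modules over a simply connected semisimple algebraic group, reducing the theorem to a finite combinatorial problem controlled by the weight bounds of Remark \ref{different}(a).

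First I would apply Proposition \ref{identif}(a) with $\mu = \lambda$, obtaining
\[
\dim \Ext^1_{G(\sigma)}(L(\lambda), L(\lambda)) \leq \dim \Ext^1_G(L(\lambda), L(\lambda)) + \dim R.
\]
The algebraic-group term vanishes, since a simple rational module over a simply connected semisimple group admits no non-trivial self-extension (any such extension would double the dimension of the one-dimensional highest-weight space, contradicting the universal property of the Weyl module). Hence it is enough to show $R = 0$.

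Next, using $s \geq 7$, the value $t = 4$ is admissible in Proposition \ref{identif}(b); writing $\lambda = \lambda_0 + 2^{4} \lambda_1$ with $\lambda_0 \in X_4$ and $\lambda_1 \in X_{r/2 - 4}$, the proposition gives
\[
R \cong \bigoplus_{\nu \in \Gamma'} \Hom_G\bigl(V(\nu)^{(r/2-4)}, \Ext^1_{G_{r/2-4}}(L(\lambda_1), L(\lambda_1))\bigr) \otimes \Hom_G\bigl(L(\lambda_0), L(\lambda_0) \otimes \opH^0(\nu)\bigr).
\]
Since $\Gamma'$ is finite (its elements satisfy $\langle \nu, \alpha_0^\vee \rangle < 17$), it suffices to show that for each $\nu \in \Gamma'$ at least one of the two Hom-factors is zero. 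For the right-hand factor, Frobenius reciprocity and the self-duality of $L(\lambda_0)$ give $\Hom_G(L(\lambda_0), L(\lambda_0) \otimes \opH^0(\nu)) \cong \bigl((L(\lambda_0) \otimes L(\lambda_0))^U\bigr)_\nu$, so $\nu$ must occur as a $T$-weight on the $U$-invariants of $L(\lambda_0) \otimes L(\lambda_0)$. For the left-hand factor, Remark \ref{different}(a) applied to the exotic kernel $G_{r/2-4}$ bounds the highest weights $\theta$ of composition factors of $\Ext^1_{G_{r/2-4}}(L(\lambda_1), L(\lambda_1))^{(-(r/2-4))}$ by $\langle \theta, \alpha_0^\vee \rangle \leq h + 4 = 16$, and this factor is non-zero only when $\nu$ coincides with some such $\theta$.

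The main obstacle is the combinatorial matching: one must show that these two admissibility conditions cannot simultaneously hold for any $\nu \in \Gamma'$ and any $\lambda_0 \in X_4$, $\lambda_1 \in X_{r/2-4}$. I would treat the (finite) list of $\nu \in \Gamma'$ case by case, handling the exceptional weight $\nu = 8\omega_4$ flagged in Lemma \ref{bnp52} separately. For each $\nu$, the task is either to rule it out from appearing as a $U$-invariant weight of $L(\lambda_0) \otimes L(\lambda_0)$ using the weight combinatorics of $F_4$ in characteristic $2$, or to preclude $\nu$ from appearing as a high weight of a composition factor of $\Ext^1_{G_{r/2-4}}(L(\lambda_1), L(\lambda_1))^{(-(r/2-4))}$ by decomposing $L(\lambda_1) \otimes L(\lambda_1)$ into $\tau$-restricted simples and invoking the explicit lists of Theorem \ref{f4gr2-simple}. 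Once every $\nu \in \Gamma'$ is disposed of in this way, every summand of $R$ vanishes, giving $R = 0$ and hence the theorem.
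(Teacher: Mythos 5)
Your reduction is set up correctly: applying Proposition \ref{identif}(a) with $\mu=\lambda$, noting $\Ext^1_G(L(\lambda),L(\lambda))=0$, and then rewriting $R$ via Proposition \ref{identif}(b) is exactly the right frame, and it matches the paper's endgame. The gap is in how you propose to kill $R$. You plan a case-by-case combinatorial matching over $\nu\in\Gamma'$, using Remark \ref{different}(a) to bound the high weights $\theta$ of composition factors of $\Ext^1_{G_{r/2-4}}(L(\lambda_1),L(\lambda_1))^{(-(r/2-4))}$ by $\langle\theta,\alpha_0^\vee\rangle\leq h+4=16$. But $\Gamma'$ consists precisely of the nonzero dominant weights with $\langle\nu,\alpha_0^\vee\rangle<17$, i.e.\ $\leq 16$, so this bound excludes nothing: every $\nu\in\Gamma'$ is a priori admissible on the left-hand factor, and you are left having to rule out each $\nu$ via the right-hand factor $\Hom_G(L(\lambda_0),L(\lambda_0)\otimes\opH^0(\nu))$ for arbitrary $\lambda_0\in X_4$ --- which certainly fails (e.g.\ $\nu$ a dominant weight of $L(\lambda_0)\otimes L(\lambda_0)$ occurs for many $\lambda_0$). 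So the "finite combinatorial problem" as you have posed it is not solvable, and in any case you have only announced it, not carried it out.

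The missing idea is that the left-hand factor is identically zero: self-extensions of simple modules vanish for the exotic Frobenius kernels, $\Ext^1_{G_{r'/2}}(L(\mu),L(\mu))=0$ for all odd $r'$ and $\mu\in X_{r'/2}$. The paper proves this first, by starting from Jantzen's vanishing $\Ext^1_{G_s}(L(\mu),L(\mu))=0$ for classical Frobenius kernels (valid since $G$ is not of type $C_n$), handling $r'=1$ via Sin's results, and then running the LHS spectral sequence for $G_s\lhd G_{r'/2}$ to see that both $E_2^{1,0}$ and $E_2^{0,1}$ vanish. With that in hand, every summand of $R$ contains the tensor factor $\Hom_G(V(\nu)^{(r/2-t)},\Ext^1_{G_{r/2-t}}(L(\lambda_1),L(\lambda_1)))=0$, so $R=0$ with no case analysis --- this is exactly Corollary \ref{corR}(i). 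You should replace your combinatorial step with this vanishing argument.
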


\begin{proof}
We know that self-extensions for classical Frobenius kernels vanish, as $G$ is not of type $C_n$  (cf. \cite[II.12.9]{Jan03}); hence $\Ext^1_{G_s}(L(\lambda),L(\lambda))=0$ for any $\lambda\in X_s$.

We aim to extend this result by replacing $s$ with $r/2$. When $r=1$ the result follows from \cite[1.7(1)(2),4.5]{Sin94b}. Suppose $r \neq 1$ and let $\lambda=\lambda_0+\tau^{r-1}\lambda_1=\lambda_0+2^s\lambda_1$ with $\lambda_1\in X_{\tau}$. We apply the LHS spectral sequence corresponding to $G_s\triangleleft G_{r/2}$. The $E_2$-page is given by
\begin{displaymath}
E^{i,j}_2= \Ext^i_{G_{r/2}/G_s}(L(\lambda_1)^{(s)}, \Ext^j_{G_s}(L(\lambda_0), L(\lambda_0)) \otimes L(\lambda_1)^{(s)}).
\end{displaymath}
First consider the $E^{1,0}$-term:
\begin{align*}
    E^{1,0}_2 &= \Ext^1_{G_{\tau}}(L(\lambda_1), \Hom_{G_s}(L(\lambda_0), L(\lambda_0))^{(-s)} \otimes L(\lambda_1)) \\
    & \cong \Ext^1_{G_{\tau}}(L(\lambda_1), L(\lambda_1))^{(s)}=0,
\end{align*}
by the discussion above. Now, we turn our attention to the $E^{0,1}$-term, which is isomorphic to 
\[\Hom_{G_{\tau}}(L(\lambda_1),\Ext^1_{G_s}(L(\lambda_0),L(\lambda_0))^{(-s)}\otimes L(\lambda_1))^{(s)}=0,\] by \cite[II.12.9]{Jan03}. Therefore, $\Ext^1_{G_{r/2}}(L(\lambda),L(\lambda))=0$, for any $\lambda \in X_{r/2}$.

Having evaluated the self-extensions for $G_{r/2}$, we now express $\lambda$ as $\lambda=\lambda_0+2^{t}\lambda_1$, with $\lambda_0\in X_{t}$ and $\lambda_1\in X_{r/2-t}$. Then, since $s \geq 7$ and $\Ext^1_{G_{r/2-t}}(L(\lambda_1),L(\lambda_1))=0$, we may apply Corollary \ref{corR}(i) and the claim follows.\end{proof}

Now, we note that in order to relate extensions between simple modules for the finite group to extensions between simple modules for the algebraic group, one needs, among other things, to bound the weights of the composition factors of $\Ext^1_{G_{7/2}}(L(\lambda),L(\mu))^{(-7/2)}$, for $\lambda,\mu \in X_{7/2}$. For the reader's convenience, we first present the following auxiliary lemma.
\begin{lemma}\label{NEW}
    Let $\lambda,\mu \in X_{7/2}$. Then any weight $\theta$ of $\Ext^1_{G_{7/2}}(L(\lambda),L(\mu))^{(-7/2)}$ satisfies $\left\langle \theta, \alpha_{0}^{\vee} \right\rangle \leq 6$.
\end{lemma}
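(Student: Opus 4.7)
The plan is to adapt the argument of Lemma~\ref{bnp52} to the present setting with $\nu=0$ and $r=7$ (so $s=3$). First, I would decompose $L(\lambda)^{*}\otimes L(\mu)$ into composition factors $L(\sigma_{0})\otimes L(\sigma_{1})^{(7/2)}$ via the $\tau$-Steinberg tensor product theorem, and use the isomorphism $\opH^{1}(G_{7/2},L(\sigma_{0})\otimes L(\sigma_{1})^{(7/2)})\cong \opH^{1}(G_{7/2},L(\sigma_{0}))\otimes L(\sigma_{1})^{(7/2)}$. Since Theorem~\ref{f4gr2-simple} identifies $\opH^{1}(G_{7/2},L(\sigma_{0}))^{(-7/2)}$ as one of $0$, $k$, $L(\omega_{1})$, or $L(\omega_{4})$, tracking weights through this decomposition yields the first inequality
\[
\langle\tau^{7}\theta,\alpha_{0}^{\vee}\rangle \leq \langle\lambda+\mu,\alpha_{0}^{\vee}\rangle + 2^{s}.
\]

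Second, I would apply the Steinberg injectivity argument via the short exact sequence
\[
0\to L(\mu)\to \mathrm{St}_{7/2}\otimes L\bigl((2^{s}-1)(\omega_{1}+\omega_{2})+(2^{s+1}-1)(\omega_{3}+\omega_{4})+w_{0}\mu\bigr)\to R\to 0,
\]
using the $G_{7/2}$-injectivity of $\mathrm{St}_{7/2}$, to obtain the surjection $\Hom_{G_{7/2}}(L(\lambda),R)\twoheadrightarrow\Ext^{1}_{G_{7/2}}(L(\lambda),L(\mu))$, and thereby the second inequality
\[
\langle\tau^{7}\theta,\alpha_{0}^{\vee}\rangle \leq 2M_{1}-\langle\lambda+\mu,\alpha_{0}^{\vee}\rangle,
\]
with $M_{1}=(2^{s}-1)(\omega_{1}+\omega_{2})+(2^{s+1}-1)(\omega_{3}+\omega_{4})$ and $\langle M_{1},\alpha_{0}^{\vee}\rangle=117$. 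Adding the two inequalities and dividing by $2$ yields $\langle\tau^{7}\theta,\alpha_{0}^{\vee}\rangle\leq M_{1}+2^{s-1}=121$, and since $\tau^{7}=2^{s}\tau$ this translates into $\langle\tau\theta,\alpha_{0}^{\vee}\rangle\leq 15$.

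To close the gap to the target $\langle\theta,\alpha_{0}^{\vee}\rangle\leq 6$, I would restrict attention to $\theta$ as a highest weight of a composition factor (hence dominant), write $\theta=a\omega_{1}+b\omega_{2}+c\omega_{3}+d\omega_{4}$, and exploit the identity $\langle\tau\theta,\alpha_{0}^{\vee}\rangle-\langle\theta,\alpha_{0}^{\vee}\rangle=2a+2b+c\geq 0$. Combined with the structural fact that every composition factor of $\Ext^{1}_{G_{7/2}}(L(\lambda),L(\mu))^{(-7/2)}$ lies in $L(\omega)\otimes L(\sigma_{1})$ for some $\omega\in\{0,\omega_{1},\omega_{4}\}$, together with the dominance constraint $\sigma_{0}+\tau^{7}\sigma_{1}\leq\lambda+\mu$ and the short list of admissible $\sigma_{0}$'s from Theorem~\ref{f4gr2-simple}, this reduces the problem to a finite case analysis on the permissible pairs $(\sigma_{0},\sigma_{1})$.

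The main obstacle is precisely this final case analysis: the two Steinberg-type inequalities alone only yield $\langle\theta,\alpha_{0}^{\vee}\rangle\leq 15$, so one must genuinely combine them with the restrictive classification of $\sigma_{0}$'s for which $\opH^{1}(G_{7/2},L(\sigma_{0}))$ is nonzero and with the dominance constraint on composition factors of $L(\lambda)^{*}\otimes L(\mu)$ in order to rule out the larger values of $\langle \sigma_{1},\alpha_{0}^{\vee}\rangle$ and arrive at the sharp bound of $6$.
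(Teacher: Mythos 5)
There is a genuine gap here. Your two Steinberg-type inequalities are set up correctly and do yield $\langle\tau^{7}\theta,\alpha_{0}^{\vee}\rangle\leq 121$, hence $\langle\theta,\alpha_{0}^{\vee}\rangle\leq\langle\tau\theta,\alpha_{0}^{\vee}\rangle\leq 15$, but as you yourself note this falls well short of the claimed bound of $6$, and the ``finite case analysis'' you invoke to close the gap is not carried out and would not obviously succeed as described. The averaging trick works in Lemma \ref{bnp52} because there one specialises $\gamma=\nu$ and exploits the large gap between $\langle\tau^{r}\nu,\alpha_{0}^{\vee}\rangle$ and $\langle\nu,\alpha_{0}^{\vee}\rangle$; here there is no such $\nu$, and the identity $\langle\tau\theta,\alpha_{0}^{\vee}\rangle-\langle\theta,\alpha_{0}^{\vee}\rangle=2a+2b+c$ gives nothing when $\theta$ is a multiple of $\omega_{4}$ (e.g.\ $\theta=7\omega_{4}$ survives all your constraints). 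Likewise the dominance constraint $\sigma_{0}+\tau^{7}\sigma_{1}\leq\lambda+\mu$ is too weak to control $\langle\sigma_{1},\alpha_{0}^{\vee}\rangle$: since $\lambda,\mu\in X_{7/2}$ one only gets $\langle\lambda+\mu,\alpha_{0}^{\vee}\rangle\leq 234$, whence $\langle\tau\sigma_{1},\alpha_{0}^{\vee}\rangle$ can a priori be as large as about $27$. So the proposed ingredients do not combine to rule out weights with $6<\langle\theta,\alpha_{0}^{\vee}\rangle\leq 15$.

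The paper's proof takes a completely different, more hands-on route: it iterates LHS spectral sequences through the chain $G_{\tau}\lhd G_{7/2}$, then $G_{1}\lhd G_{3}$, then $G_{1}\lhd G_{2}$, at each stage splitting into the cases where the bottom $\tau$-adic digits of $\lambda$ and $\mu$ agree or differ. When they differ, the $\Ext^{1}$ collapses to a $\Hom$ against one of Sin's explicitly known modules $W_{i}\in\{k,\,k\oplus k,\,L(\omega_{4}),\,k\oplus L(\omega_{1}),\,k\oplus L(\omega_{4})\}$, whose weights are $2$-small; when they agree, one descends to an $\Ext^{1}$ over a smaller kernel. The terminal computation is an explicit weight analysis of $\Hom_{G_{\tau}}(L(\overline{\lambda_{111}}),L(\omega_{4})\otimes L(\overline{\mu_{111}}))$ using Sin's Table 2, which shows those weights are $4$-small, and the worst case picks up one extra factor of $L(\omega_{4})$, giving the sharp bound $4+\langle\omega_{4},\alpha_{0}^{\vee}\rangle=6$. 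To repair your argument you would need to replace the global averaging step by this kind of digit-by-digit reduction (or some equivalent explicit control on the $\sigma_{1}$ that can actually occur), since the bound $6$ genuinely depends on Sin's rank-one computations rather than on the Steinberg injectivity estimate.
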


\begin{proof}
First, we apply the LHS spectral sequence corresponding to $G_{\tau} \lhd G_{7/2}$ and we obtain
\begin{displaymath}
E^{i,j}_2:=\Ext^i_{G_3}(L(\lambda_1),\Ext^j_{G_{\tau}}(L(\lambda_0),L(\mu_0))^{(-\tau)} \otimes L(\mu_1))^{(\tau)}.
\end{displaymath}
Then, notice that for $\lambda_0=\mu_0$, we have $\Ext^1_{G_{\tau}}(L(\lambda_0),L(\lambda_0))=0$, so it follows that $E^{0,1}_2=0$; thus $E^1\cong E^{1,0}_2=\Ext^1_{G_3}(L(\lambda_1),L(\mu_1))^{(\tau)}$.

Otherwise, the case $\lambda_0 \neq \mu_0$ implies $E^{1,0}_2=E^{2,0}_2=0$ and therefore 
\begin{align*}
    E^1 & \cong E^{0,1}_2=\Hom_{G_3}(L(\lambda_1),\Ext^1_{G_{\tau}}(L(\lambda_0),L(\mu_0))^{(-\tau)} \otimes L(\mu_1))^{(\tau)} \\
    & \cong \Hom_{G_3}(L(\lambda_1), W_1 \otimes L(\mu_1))^{(\tau)},
\end{align*}
where $W_1 \in \{k,L(\omega_4), k \oplus L(\omega_1)\}$ (cf. \cite[4.5(a), 4.6, 4.9]{Sin94b}). Then, by Lemma \ref{l33}, it follows that $E^1  \cong \Hom_{G}(L(\lambda_1), W_1 \otimes L(\mu_1))$. Thus, a weight $\tau^7 \gamma$ of $\Ext^1_{G_{7/2}}(L(\lambda),L(\mu))$ is either zero, or is among the weights of $\Ext^1_{G_3}(L(\lambda_1), L(\mu_1))^{(\tau)}$.

Hence, we proceed by considering $M_1:=\Ext^1_{G_3}(L(\lambda_1), L(\mu_1))$ and we apply the spectral sequence corresponding to $G_1 \lhd G_3$. We obtain 
\begin{displaymath}
E^{i,j}_2:=\Ext^i_{G_2}(L(\lambda_{11}),\Ext^j_{G_1}(L(\lambda_{10}),L(\mu_{10}))^{(-1)} \otimes L(\mu_{11}))^{(1)}.
\end{displaymath}
A similar argument shows that the case $\lambda_{10}=\mu_{10}$ implies $M_1 \cong E^{1,0}_2=\Ext^1_{G_2}(L(\lambda_{11}),L(\mu_{11}))^{(1)}$.

Otherwise, if $\lambda_{10} \neq \mu_{10}$, we obtain
\begin{align*}
    M_1 & \cong \Hom_{G_2}(L(\lambda_{11}),\Ext^1_{G_1}(L(\lambda_{10}),L(\mu_{10}))^{(-1)} \otimes L(\mu_{11}))^{(1)} \\
    & \cong \Hom_{G_2}(L(\lambda_{11}), W_2 \otimes L(\mu_{11}))^{(1)},
\end{align*}
where $W_2 \in \{k, k \oplus k, L(\omega_4), k \oplus L(\omega_1), k \oplus L(\omega_4)\}$ (cf. \cite[4.11]{Sin94b}). In either case, $W_2$ has weights that are $2$-small, and it follows that by Lemma \ref{l33}, $M_1$ has trivial $G$-structure. Thus, it remains to consider the weights of $\Ext^1_{G_2}(L(\lambda_{11}), L(\mu_{11}))^{(1)}$.

Finally, let $M_2:=\Ext^1_{G_2}(L(\lambda_{11}), L(\mu_{11}))$ and run the LHS spectral sequence corresponding to $G_1 \lhd G_2$. We have 
\begin{displaymath}
E^{i,j}_2:=\Ext^i_{G_1}(L(\lambda_{111}),\Ext^j_{G_1}(L(\lambda_{110}),L(\mu_{110}))^{(-1)} \otimes L(\mu_{111}))^{(1)}.
\end{displaymath}
Analogously, if $\lambda_{110}=\mu_{110}$, then $M_2 \cong E^{1,0}_2=\Ext^1_{G_1}(L(\lambda_{111}),L(\mu_{111}))^{(1)}$.

Then, if $\lambda_{110} \neq \mu_{110}$, we have
\begin{align*}
    M_2 & \cong \Hom_{G_1}(L(\lambda_{111}),\Ext^1_{G_1}(L(\lambda_{110}),L(\mu_{110}))^{(-1)} \otimes L(\mu_{111}))^{(1)} \\
    & \cong \Hom_{G_1}(L(\lambda_{111}), W_3 \otimes L(\mu_{111}))^{(1)},
\end{align*}
with $W_3 \in \{k, k \oplus k, L(\omega_4), k \oplus L(\omega_1), k \oplus L(\omega_4)\}$ (cf. \cite[4.11]{Sin94b}). In fact, in order to bound the weights, it suffices to consider $\Hom_{G_1}(L(\lambda_{111}), L(\xi) \otimes L(\mu_{111}))$, for $\xi \in \{\omega_1,\omega_4\}$. 

Let $\lambda_{111}=\lambda_{1110}+\tau\overline{\lambda_{111}}$, with $\lambda_{1110} \in X_{\tau}$ and consider a similar expression for $\mu_{111}$. First, suppose $\xi=\omega_1$ and we have
\begin{align*}
\Hom_{G_1}&(L(\lambda_{111}),L(\omega_1)\otimes L(\mu_{111}))\\
&\cong \Hom_{G_1/G_{\tau}}(L(\overline{\lambda_{111}})^{(\tau)},\Hom_{G_{\tau}}(L(\lambda_{1110}),L(\mu_{1110}))\otimes L(\omega_4)^{(\tau)} \otimes L(\overline{\mu_{111}})^{(\tau)})\\
&\cong \Hom_{G_{\tau}}(L(\overline{\lambda_{111}}),L(\omega_4) \otimes L(\overline{\mu_{111}}))^{(\tau)},\end{align*}
if and only if $\lambda_{1110}=\mu_{1110}$. Thus, for $\gamma$ a weight of $\Hom_{G_{\tau}}(L(\overline{\lambda_{111}}),L(\omega_4) \otimes L(\overline{\mu_{111}}))^{(-\tau)}$, it follows that $\overline{\lambda_{111}}+\tau\gamma$ is a weight of $L(\omega_4) \otimes L(\overline{\lambda_{111}})$. Then, using \cite[Table 2]{Sin94a}, we may deduce the possible values of $\gamma$: $\omega_1$, $\omega_2$, $\omega_3$, $\omega_4$, $\omega_1+\omega_4$, $2\omega_4$. In particular, note that we have $\left\langle \gamma, \alpha_{0}^{\vee} \right\rangle \leq 4$.

Then, suppose $\xi=\omega_4$ and we obtain
\begin{align*}
\Hom_{G_1}&(L(\lambda_{111}),L(\omega_4)\otimes L(\mu_{111}))\\
&\cong \Hom_{G_{\tau}}(L(\overline{\lambda_{111}}),  \Hom_{G_{\tau}}(L(\lambda_{1110}),L(\omega_4) \otimes L(\mu_{1110}))^{(-\tau)} \otimes L(\overline{\mu_{111}}))^{(\tau)}.\end{align*}
In order to bound the weights, we must all of the possible weights of $\Hom_{G_{\tau}}(L(\lambda_{1110}),L(\omega_4) \otimes L(\mu_{1110}))^{(-\tau)}$, mentioned in the previous case, in turn and perform a similar calculation. For instance, let $\omega_1+\omega_4$ be such a weight and we have
\begin{align*}
    \Hom_{G_{\tau}}(L&(\overline{\lambda_{111}}), L(\omega_1+\omega_4) \otimes L(\overline{\mu_{111}}))^{(\tau)} \\
    &\cong  [\Hom_{G_{\tau}}(L(\overline{\lambda_{111}}), L(\omega_4) \otimes L(\overline{\mu_{111}})) \otimes L(\omega_4)^{(\tau)}]^{(\tau)}.
\end{align*}
Thus, any weight $\gamma$ of $\Hom_{G_1}(L(\lambda_{111}),L(\omega_4)\otimes L(\mu_{111}))^{(-1)}$ must be a weight of \linebreak$\Hom_{G_{\tau}}(L(\overline{\lambda_{111}}), L(\omega_4) \otimes L(\overline{\mu_{111}}))^{(-\tau)} \otimes L(\omega_4)$; now, since any weight of \linebreak $\Hom_{G_{\tau}}(L(\overline{\lambda_{111}}), L(\omega_4) \otimes L(\overline{\mu_{111}}))^{(-\tau)}$ is $4$-small, as seen in the case $\xi=\omega_1$, it follows that $\gamma$ must satisfy $\left\langle \gamma, \alpha_{0}^{\vee} \right\rangle \leq 4+\left\langle \omega_4, \alpha_{0}^{\vee} \right\rangle=6$. The other cases follow in a similar fashion.

By the discussion above, we may conclude that any weight $\theta$ of $\Ext^1_{G_{7/2}}(L(\lambda),L(\mu))^{(-7/2)}$ is either zero, or is among the weights of $\Ext^1_{G_{1}}(L(\lambda_{111}),L(\mu_{111}))^{(-1)}$, (namely $k$, $k \oplus k$, $L(\omega_4)$, $k \oplus L(\omega_4)$, $k \oplus L(\omega_1)$ -- so $2$-small), or, lastly, that it must be $6$-small. Thus, our claim follows.
\end{proof}

Finally, the following theorem relates extensions between simple $kG(\sigma)$-modules and extensions between simple $G$-modules.

\begin{theorem}\label{ext} Assume $r=2s+1$ with $s\geq 7$. Given $\lambda,\mu\in X_{\sigma}$, let \begin{align*}\lambda&=\sum_{i=0}^{r-1}\tau^i\lambda_{i/2}\\&=\lambda_0+\tau\lambda_{1/2}+2\lambda_1+\tau^3\lambda_{3/2}+\dots+2^s\lambda_{(r-1)/2}\end{align*} be the $\tau$-adic expansion of $\lambda$, and take a similar expression for $\mu$. Then there exists an integer $0\leq n<r$ such that \[\Ext^1_{G(\sigma)}(L(\lambda),L(\mu))\cong \Ext^1_G(L(\tilde\lambda),L(\tilde\mu))\]
where
\[\tilde\lambda=\sum_{i=0}^{n-1}\tau^i\lambda_{\frac{i+r-n}{2}}+\sum_{i=n}^{r-1}\tau^i\lambda_{\frac{i-n}{2}}.\]\end{theorem}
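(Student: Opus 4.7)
The plan is to combine a $\tau$-twisting argument on $kG(\sigma)$-modules with Corollary \ref{corR}. The key starting observation is that $\tau$ preserves $G(\sigma)$: for $g\in G(\sigma)$ one has $\sigma(\tau(g))=\tau(\sigma(g))=\tau(g)$, and injectivity on $G(\sigma)$ follows since $\ker\tau$ is infinitesimal while $G(\sigma)$ is reduced. Restricted to $G(\sigma)$, $\tau$ is therefore an automorphism whose $r$-th power is $\sigma|_{G(\sigma)}=\mathrm{id}$. Hence for any integer $n$, precomposition with $\tau^n$ is an autoequivalence of the category of $kG(\sigma)$-modules, giving a natural isomorphism
\[
\Ext^1_{G(\sigma)}(L(\lambda),L(\mu)) \cong \Ext^1_{G(\sigma)}(L(\lambda)^{(\tau^n)},L(\mu)^{(\tau^n)}).
\]
I would then expand $L(\lambda)\cong\bigotimes_{i=0}^{r-1}L(\lambda_{i/2})^{(\tau^i)}$ by the $\tau$-analogue of Steinberg's tensor product theorem, twist by $\tau^n$, and restrict to $G(\sigma)$; using $\tau^r\equiv\mathrm{id}$ on $G(\sigma)$ to reduce exponents modulo $r$ and re-indexing $j=(i+n)\bmod r$, the resulting module is exactly $L(\tilde\lambda)|_{G(\sigma)}$ for the cyclic shift $\tilde\lambda$ in the statement; similarly for $\mu$. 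Thus for every $0\le n<r$, $\Ext^1_{G(\sigma)}(L(\lambda),L(\mu))\cong\Ext^1_{G(\sigma)}(L(\tilde\lambda),L(\tilde\mu))$.

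Next, I would exhibit an $n$ for which Corollary \ref{corR} identifies $\Ext^1_{G(\sigma)}(L(\tilde\lambda),L(\tilde\mu))$ with $\Ext^1_G(L(\tilde\lambda),L(\tilde\mu))$. Taking $t=s-3\ge 4$ in Proposition \ref{identif}(b) so that $r/2-t=7/2$, Lemma \ref{NEW} restricts the $\nu\in\Gamma'$ that can contribute to the residual term $R$ to those with $\langle\nu,\alpha_0^{\vee}\rangle\le 6$. When $\lambda=\mu$, the choice $n=0$ works immediately: both $\Ext^1_{G(\sigma)}(L(\lambda),L(\lambda))=0$ by Theorem \ref{self} and $\Ext^1_G(L(\lambda),L(\lambda))=0$ by \cite[II.12.9]{Jan03} (as $G$ is of type $F_4$, not $C_n$). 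When $\lambda\neq\mu$, I choose $n$ so that the $\tau$-adic digits of $\tilde\lambda$ at positions $r-7,\dots,r-1$ coincide with the corresponding digits of $\tilde\mu$. This means $\tilde\lambda_1=\tilde\mu_1\in X_{7/2}$, and running the LHS spectral sequence of Theorem \ref{self} at level $7/2$ in place of $r/2$ gives $\Ext^1_{G_{7/2}}(L(\tilde\lambda_1),L(\tilde\mu_1))=0$. Corollary \ref{corR}(i) then produces the desired isomorphism.

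The main obstacle is the combinatorial step of locating such an $n$: one must show that for every $\lambda\ne\mu$ in $X_\sigma$ there is a cyclic arc of length $7$ in $\{0,\dots,r-1\}$ on which the $\tau$-adic digits of $\lambda$ and $\mu$ agree, or handle the exceptions. This is immediate when the set $D\subseteq\{0,\dots,r-1\}$ of differing positions is small, since $r\ge 15$; however, $D$ can be sufficiently dense (e.g.\ evenly spread) that no monochromatic arc of length $7$ exists. In this residual case, I would fall back on Corollary \ref{corR}(ii): combining the restriction $\langle\nu,\alpha_0^{\vee}\rangle\le 6$ from Lemma \ref{NEW} with the bound $\langle\theta,\alpha_0^{\vee}\rangle\le 2^{-t}\langle\nu,\alpha_0^{\vee}\rangle$ of Lemma \ref{l33}(b), one arranges $\tilde\lambda_0$ and $\tilde\mu_0$ (the bottom $2(s-3)$ digits of the shift) so that $\Hom_G(L(\tilde\lambda_0),L(\tilde\mu_0)\otimes\opH^0(\nu))=0$ for every relevant $\nu$. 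Carrying out the bookkeeping that identifies, for each pattern of $D$, a single shift $n$ that activates either (i) or (ii), is the technical heart of the argument; once complete, the twisting identification of the first paragraph combines with Corollary \ref{corR} to yield the stated isomorphism with $\Ext^1_G(L(\tilde\lambda),L(\tilde\mu))$.
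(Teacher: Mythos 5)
Your first paragraph (twisting by $\tau^n$ to replace $(\lambda,\mu)$ by the cyclic shifts $(\tilde\lambda,\tilde\mu)$ without changing $\Ext^1_{G(\sigma)}$) is correct and is exactly how the paper begins, and your reduction to Proposition \ref{identif}(b) with $t=s-3$ is also the right frame. The problem is the choice of $n$. Your primary strategy --- pick $n$ so that the top seven $\tau$-adic digits of $\tilde\lambda$ and $\tilde\mu$ agree, so that $\Ext^1_{G_{7/2}}(L(\tilde\lambda_1),L(\tilde\mu_1))$ is a self-extension group and Corollary \ref{corR}(i) applies --- genuinely fails for many pairs: as you note, if the differing digit positions are spread out (e.g.\ $r=15$ and $\lambda,\mu$ differing at positions $0,5,10$), no cyclic arc of length $7$ consists entirely of agreeing digits. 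Your fallback via Corollary \ref{corR}(ii) is where the actual content of the theorem lives, and it is left as unexecuted ``bookkeeping.'' Moreover, as stated it is not clear it can be executed: knowing $\tilde\lambda_0\neq\tilde\mu_0$ does not by itself force $\Hom_G(L(\tilde\lambda_0),L(\tilde\mu_0)\otimes\opH^0(\nu))=0$, since $L(\tilde\lambda_0)$ may perfectly well occur in the socle of $L(\tilde\mu_0)\otimes\opH^0(\nu)$; and the blanket hypothesis of Corollary \ref{corR}(ii) (vanishing for \emph{all} $\nu\in\Gamma'$) is stronger than what one can arrange.

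The paper's resolution is a different, always-available choice of $n$: move \emph{one} differing digit to position $2s-7$, i.e.\ to the top digit of the $\lambda_0$-part, writing $\tilde\lambda=\lambda'+\tau^{2s-7}\lambda''+\tau^{2s-6}\lambda'''$ with $\lambda''\neq\mu''$. For each $\nu$ one then kills one of the two tensor factors of the corresponding summand of $R$ (a per-$\nu$ refinement of Corollary \ref{corR}, not the corollary itself): by Lemma \ref{l33}(b) the weights $\tau^{2s-7}\theta$ of $\Hom_{G_{(2s-7)/2}}(L(\lambda'),L(\mu')\otimes\opH^0(\nu))$ satisfy $\langle\tau\theta,\alpha_0^\vee\rangle\leq\langle\nu,\alpha_0^\vee\rangle/2^{s-4}\leq 2$, which forces $\theta=0$ for every $\nu\neq 8\omega_4$; the inner Hom is then $G$-trivial, the outer Hom factors off $\Hom_G(L(\lambda''),L(\mu''))=0$, and that summand dies. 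The single exceptional weight $\nu=8\omega_4$ (where $\theta=\omega_4$ can survive and $\Hom_G(L(\lambda''),L(\omega_4)\otimes L(\mu''))$ may be nonzero) is handled through the \emph{other} factor: $\Ext^1_G(L(\lambda''')\otimes V(8\omega_4)^{(7/2)},L(\mu'''))\cong\Hom_G(V(8\omega_4),\Ext^1_{G_{7/2}}(L(\lambda'''),L(\mu'''))^{(-7/2)})=0$ because Lemma \ref{NEW} makes the target $6$-small while $\langle 8\omega_4,\alpha_0^\vee\rangle=16$. This is the step your sketch is missing; without it (or a completed version of your case analysis on the pattern of differing digits) the proof is incomplete.
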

\begin{proof}
We express $\lambda$ and $\tilde\lambda$ in this way, motivated by the fact that $V^{(r/2)}\cong_{G(\sigma)} V$ for any $G(\sigma)$-module $V$. Hence, applying Steinberg's Tensor Product Theorem leads to the isomorphism $L(\tilde\lambda)\cong_{G(\sigma)} L(\lambda)^{(n/2)}$. 

By \cite[2.1(c)]{Sin94a} there is an injection $\Ext^1_G(L(\tilde\lambda),L(\tilde\mu))\hookrightarrow \Ext^1_{G(\sigma)}(L(\tilde\lambda),L(\tilde\mu))$ and since $\tau$ is an automorphism of $G(\sigma)$, we have $\Ext^1_{G(\sigma)}(L(\tilde\lambda),L(\tilde\mu))\cong \Ext^1_{G(\sigma)}(L(\lambda),L(\mu))$. Thus it suffices to show (by dimensions) that there is also an injection $\Ext^1_{G(\sigma)}(L(\tilde\lambda),L(\tilde\mu))\hookrightarrow \Ext^1_G(L(\tilde\lambda),L(\tilde\mu))$. 

First, suppose $\lambda=\mu$ and the claim follows from Theorem \ref{self} with $n=0$. 
Now assume $\lambda \neq \mu$. Then there exists $0\leq i\leq r$ such that $\lambda_{i/2}\neq \mu_{i/2}$. Due to the discussion above, we may choose the integer $n$ such that the differing digits in the $\tau$-adic expansion of $\tilde\lambda$ and $\tilde\mu$ are in a certain position, namely $\tilde\lambda_{\frac{2s-7}{2}}\neq \tilde\mu_{\frac{2s-7}{2}}$. Thus, put $n=2s-7-i$ if $i \leq 2s-7$ and $n=r+2s-7-i$ if $i \geq 2s-7$. Therefore, we write $\tilde\lambda=\lambda'+\tau^{2s-7}\lambda''+\tau^{2s-6}\lambda'''$ with $\lambda'\in X_{\frac{2s-7}{2}}$, $\lambda''=\tilde\lambda_{\frac{2s-7}{2}}$ and $\lambda'''\in X_{7/2}$, and take a similar expression for $\mu$.

Then, we apply Proposition \ref{identif}(b) with $t=s-3$. Thus \begin{displaymath}
	\mathrm{dim} \Ext_{G(\sigma)}^{1}(L(\tilde\lambda), L(\tilde\mu))  \leq \mathrm{dim} \Ext_{G}^{1}(L(\tilde\lambda), L(\tilde\mu)) + \mathrm{dim} R,
	\end{displaymath} where $R$ is isomorphic to \[\bigoplus_{\nu\in \Gamma^{\prime}}\Ext^1_G(L(\lambda''')\otimes V(\nu)^{(7/2)},L(\mu''')) \otimes \Hom_G(L(\lambda'+\tau^{2s-7}\lambda''),L(\mu'+\tau^{2s-7}\mu'')\otimes \opH^0(\nu)).\]
We turn our attention to $\Hom_G(L(\lambda'+\tau^{2s-7}\lambda''),L(\mu'+\tau^{2s-7}\mu'')\otimes \opH^0(\nu))$. We have
\begin{align*}
\Hom_G&(L(\lambda')\otimes L(\lambda'')^{(\frac{2s-7}{2})},L(\mu')\otimes L(\mu'')^{(\frac{2s-7}{2})}\otimes \opH^0(\nu))\\
&\cong \Hom_{G/G_{\frac{2s-7}{2}}}(L(\lambda'')^{(\frac{2s-7}{2})},\Hom_{G_{\frac{2s-7}{2}}}(L(\lambda'),L(\mu')\otimes \opH^0(\nu))\otimes L(\mu'')^{(\frac{2s-7}{2})}). \end{align*}
Consider $\tau^{2s-7}\theta$ a weight of $\Hom_{G_{\frac{2s-7}{2}}}(L(\lambda'),L(\mu')\otimes \opH^0(\nu))$ and by Lemma \ref{l33} (b), it follows that $\left\langle \tau^{2s-7}\theta, \alpha_{0}^{\vee} \right\rangle \leq \left\langle \nu, \alpha_{0}^{\vee} \right\rangle$. Thus $\left\langle \tau\theta, \alpha_{0}^{\vee} \right\rangle \leq \frac{\left\langle \nu, \alpha_{0}^{\vee} \right\rangle}{2^{s-4}} \leq 2$. By Lemma \ref{bnp52}, $\left\langle \tau\theta, \alpha_{0}^{\vee} \right\rangle \leq 2$ only when $\nu=8\omega_4$, in which case $\theta=0$ or $\theta=\omega_4$. Otherwise, if $\nu \neq 8\omega_4$, it follows that $\left\langle \nu, \alpha_{0}^{\vee} \right\rangle <16$ and, therefore, that $\Hom_{G_{\frac{2s-7}{2}}}(L(\lambda'),L(\mu')\otimes \opH^0(\nu))$ must have trivial $G$-structure.

Now, suppose $\Hom_{G_{\frac{2s-7}{2}}}(L(\lambda'),L(\mu')\otimes \opH^0(\nu))$ has trivial $G$-structure, and we may write
\begin{align*}
\Hom_G&(L(\lambda')\otimes L(\lambda'')^{(\frac{2s-7}{2})},L(\mu')\otimes L(\mu'')^{(\frac{2s-7}{2})}\otimes \opH^0(\nu))\\
&\cong \Hom_{G/G_{\frac{2s-7}{2}}}(L(\lambda'')^{(\frac{2s-7}{2})},\Hom_{G_{\frac{2s-7}{2}}}(L(\lambda'),L(\mu')\otimes \opH^0(\nu))\otimes L(\mu'')^{(\frac{2s-7}{2})})\\
&\cong \Hom_{G}(L(\lambda''),L(\mu''))\otimes \Hom_{G}(L(\lambda'),L(\mu')\otimes \opH^0(\nu)).\end{align*}
Since $\lambda''=\tilde\lambda_{\frac{2s-7}{2}}\neq \tilde\mu_{\frac{2s-7}{2}}=\mu''$, all of the corresponding summands of $R$ vanish.

It remains to consider the case in which $\tau^{2s-7}\omega_4$ is a potential weight of \linebreak $\Hom_{G_{\frac{2s-7}{2}}}(L(\lambda'),L(\mu')\otimes \opH^0(\nu))$, so we may assume $\nu=8\omega_4$. We obtain 
\begin{align*}
\Hom_G&(L(\lambda')\otimes L(\lambda'')^{(\frac{2s-7}{2})},L(\mu')\otimes L(\mu'')^{(\frac{2s-7}{2})}\otimes \opH^0(\nu))\\
&\cong \Hom_{G/G_{\frac{2s-7}{2}}}(L(\lambda'')^{(\frac{2s-7}{2})},\Hom_{G_{\frac{2s-7}{2}}}(L(\lambda'),L(\mu')\otimes \opH^0(\nu))\otimes L(\mu'')^{(\frac{2s-7}{2})})\\
&\cong \Hom_{G}(L(\lambda''),L(\omega_4) \otimes L(\mu'')).\end{align*}
Moreover, we know that $\lambda''=\tilde\lambda_{\frac{2s-7}{2}}\neq \tilde\mu_{\frac{2s-7}{2}}=\mu'' \in X_{\tau}$. Then, careful consideration using \cite[Table V]{Sin94b} shows that whenever $\lambda''-\mu''=\pm \omega_4$, we have $\Hom_{G}(L(\lambda''),L(\omega_4) \otimes L(\mu''))\neq 0$. 

Thus, since we cannot yet conclude that the summand of $R$ corresponding to $\nu=8\omega_4$ vanishes, we must turn our attention to $\Ext^1_G(L(\lambda''') \otimes V(8\omega_4)^{(7/2)}, L(\mu'''))$, for $\lambda''', \mu''' \in X_{7/2}$. We run the LHS spectral sequence corresponding to $G_{7/2} \lhd G$. First, consider the $E^{i,0}_2$-term for $i>0$:
\begin{displaymath}
E^{i,0}_2:=\Ext^1_{G/G_{7/2}}(V(8\omega_4)^{(7/2)}, \Hom_{G_{7/2}}(L(\lambda'''), L(\mu'''))). 
\end{displaymath}
Since $\Hom_{G_{7/2}}(L(\lambda'''), L(\mu'''))$ is either zero or has trivial $G$-structure, it follows that $E^{1,0}_2=E^{2,0}_2=0$, so we have $E^1_2 \cong E^{0,1}_2$. Thus 
\begin{align*}
\Ext^1_G&(L(\lambda''') \otimes V(8\omega_4)^{(7/2)}, L(\mu''')) \\
&\cong \Hom_{G}(V(8\omega_4), \Ext^1_{G_{7/2}}(L(\lambda'''), L(\mu'''))^{(-7/2)}).
\end{align*}
Then, notice that by Lemma \ref{NEW}, any weight $\theta$ of $ \Ext^1_{G_{7/2}}(L(\lambda'''), L(\mu'''))^{(-7/2)}$ must satisfy $\left\langle \theta, \alpha_{0}^{\vee} \right\rangle \leq 6$. Hence, $\Hom_{G}(V(8\omega_4), \Ext^1_{G_{7/2}}(L(\lambda'''), L(\mu'''))^{(-7/2)})=0$, so $\Ext^1_G(L(\lambda''') \otimes V(8\omega_4)^{(7/2)}, L(\mu'''))=0$. Thus, all of the summands vanish, giving $R=0$ and the claim follows.
\end{proof}

\begin{flushleft}

\end{flushleft}
\end{document}